\newtheorem{prop}{Proposition}[section]
\newtheorem{df}[prop]{Definition}
\newtheorem{lemma}[prop]{Lemma}
\newtheorem{thm}[prop]{Theorem}
\newtheorem{cor}[prop]{Corollary}
\newtheorem{conj}[prop]{Conjecture}
\newtheorem*{thm*}{Theorem}
\theoremstyle{definition}
\newtheorem{rem}[prop]{Remark}
\newtheorem{ex}[prop]{Example}
\newtheorem{notation}[prop]{Notation}
\numberwithin{equation}{section}
\title[The local-triviality dimension of actions of compact quantum groups]{The local-triviality dimension of actions\\ of compact quantum groups} 
\author[E.~Gardella]{Eusebio Gardella} 
\address[E.~Gardella]{Mathematisches Institut, Fachbereich Mathematik und Informatik
der Universitat M\"unster, Einsteinstrasse 62, 48149 M\"unster, Germany}
\email{gardella@uni-muenster.de}
\author[P. M.~Hajac]{Piotr M.~Hajac}
\address[P. M.~Hajac]{Instytut Matematyczny, Polska Akademia Nauk, ul. \'Sniadeckich 8, Warszawa, 00-656 Poland}
\email{pmh@impan.pl }
\author[M.~Tobolski]{Mariusz Tobolski}
\address[M.~Tobolski]{Instytut Matematyczny, Polska Akademia Nauk, ul. \'Sniadeckich 8, Warszawa, 00-656 Poland}
\email{mtobolski@impan.pl }
\author[J.~Wu]{Jianchao Wu}
\address[J.~Wu]{Department of Mathematics, Penn State University, 109 McAllister Building, University Park, PA 16802 USA}
\email{jianchao.wu@psu.edu}
\subjclass[]{}
\keywords{}
\begin{document}

\begin{abstract}
We define the local-triviality dimension for actions of compact quantum groups on unital C*-algebras. The resulting 
compact quantum principal bundle is said to be locally trivial when this dimension is finite. For commutative \mbox{C*-al}\-ge\-bras, 
this notion 
recovers the standard definition of local triviality of compact principal 
bundles. We  prove that  actions with finite local-triviality dimension are automatically free. Then we apply this new 
notion to prove the noncommutative Borsuk--Ulam-type conjecture under the assumption that a compact quantum group admits 
a non-trivial classical subgroup whose induced action has finite local-triviality dimension. This is a noncommutative extension of the Borsuk--Ulam-type theorem for locally trivial principal bundles. 
\end{abstract}
\maketitle
{\small\tableofcontents}
\section{Introduction}\noindent

The noncommutative geometry program of Connes \cite{Co94}, taking inspiration from a variety of areas including 
representation theory, quantum mechanics, and index theory, aims at building new mathematical tools by providing 
noncommutative (or ``quantum'') generalizations of classical mathematical theories. One starting point of this program is the 
Gelfand--Naimark natural duality between commutative \mbox{C*-al}gebras
and locally compact Hausdorff spaces~\cite{gn-43}, which leads to the perception of the study of general, possibly 
noncommutative C*-algebras as 
the theory of \emph{noncommutative topology}. 
Many other important mathematical subjects have since found their noncommutative analogues, e.g., measure theory,
spin geometry, and topological $K$-theory. The resulting new tools have made far-reaching impact in topology, 
representation theory, ergodic theory, mathematical physics, etc. 


In this paper, we are interested in the concept of a principal bundle which is pivotal in algebraic topology and
provides a rigorous mathematical description of gauge field theories in physics. For a compact group $G$, a compact 
Hausdorff principal $G$-bundle in the sense of H.~Cartan may be taken as a compact Hausdorff space with a free continuous 
$G$-action. However, it is often desirable in applications to impose the condition of \emph{local triviality}, 
namely that each point has a neighborhood of the form $G \times W$, where $W$ is some
open subset of the orbit space and $G$ acts only on the first coordinate by translation. Such a neighborhood is called 
\emph{trivializable}, as it represents a trivial bundle. Local triviality is incorporated into the definition of principal bundles per 
Steenrod, but for the sake of clarity, we will stick to Cartan's formulation in this paper and explicitly state local triviality when 
needed. 

Attempts to extend locally trivial principal bundles to the realm of noncommutative topology go back already to the 1990's~\cite{bk-94,p-94}.
Unlike  the notion of a vector bundle, which immediately entered noncommutative topology as a finitely generated projective 
module via the Serre--Swan theorem, the notion of a \emph{locally trivial} compact Hausdorff principal bundle resisted 
generalization to 
noncommutative topology, largely due to the latter's \emph{global} nature.

Already  the concept of a free action on C*-algebras proved to be difficult to formulate in a satisfactory 
manner~\cite{BCH17}.
A key problem in imposing the local-triviality condition in this setting is the lack of a C*-al\-ge\-braic formulation of the notion
of an \emph{open} cover. Therefore, there came first 
the definition of a piecewise trivial compact quantum principal bundle, using
appropriate families of ideals to define a noncommutative finite \emph{closed} cover~\cite{HKMZ11,hkz-12}.
However, as shown in \cite{BHMS07}, 
a piecewise trivial compact Hausdorff principal bundle need not be locally trivial, so the problem
of introducing local triviality to noncommutative geometry remained unsolved.

In this paper, we introduce a notion of locally trivial compact quantum principal bundles. 
Our local triviality is characterized by the finiteness of a dimension concept which we call the \emph{local-triviality dimension}, defined for actions of compact quantum groups on unital C*-algebras. 
Our approach is inspired by the theory of the Rokhlin dimension \cite{HWZ15} used in and around the classification program of unital simple 
separable nuclear C*-algebras and is also motivated by the noncommutative Borsuk--Ulam-type conjecture \cite{BDH15}. 

To explain 
how we circumvent the need for open covers in our definition, we first describe an equivalent characterization of local triviality in the classical setting. We begin by noting that a prominent example of locally trivial compact Hausdorff principal bundles for a compact group $G$ is given by applying Milnor's join construction to $G$. More precisely, the \emph{join} $G \ast G$ is defined to be the quotient topological space of $G \times G \times [0,1]$ by collapsing each of the two copies of $G$ at the endpoints of $[0,1]$, one at a time. When equipped with the diagonal translation action, the join $G \ast G$ becomes a locally trivial compact Hausdorff principal $G$-bundle. We may iterate this construction to obtain the \emph{multi-joins} 
\[
	E_n {G} :=\underbrace{G \ast G \ast \ldots \ast G}_{n+1}
\]
for any natural number $n$. For example, when $G$ is the two-element group, the multi-join $E_n {G}$ is identified with the $n$-sphere with the antipodal action. These multi-joins 
provide the desired equivalent characterization of local triviality: 
a compact Hausdorff principal bundle is locally trivial if and only if it admits an equivariant continuous map into such a multi-join. This fact is proved by a partition-of-unity argument. 

Observe that this equivalent characterization is \emph{global} in nature, in the sense that it avoids the need to talk about open neighborhoods. Therefore, it serves as a perfect point of departure in our quest to generalize local triviality to the noncommutative setting. In addition, it turns out that the smallest $n$ needed for the existence of an equivariant continuous map into $E_n {G}$ is exactly one less than the smallest number of trivializable open sets needed to cover the compact Hausdorff principal bundle. 
As we shall see below, it is particularly meaningful to keep track of this number $n$ as a measurement of the complexity of the principal bundle. This is our local-triviality dimension in the classical setting\footnote{The terms \emph{$G$-index} and \emph{Schwarz genus} have also been used for this number in the literature. }. Thus in summary, the local triviality of a compact Hausdorff principal bundle is characterized by the finiteness of its local-triviality dimension. 

To generalize this dimension concept to the noncommutative setting, we just need to invoke the Gelfand--Naimark duality: the local-triviality dimension of an action by a compact quantum group $\mathbb{G}$ on a unital C*-algebra $A$ is the smallest number $n$ such that there exists an equivariant $*$-homomorphism from the C*-algebra of the ``$n$-th multi-join'' of $\mathbb{G}$ into $A$. There are, however, some subtleties in constructing multi-joins for compact quantum groups. A naive approach is to dualize the construction in the classical setting and define the C*-algebra of the join of $\mathbb{G}$ with itself as a C*-subalgebra of $C(\mathbb{G}) \otimes C(\mathbb{G}) \otimes C([0,1])$, where $\otimes$ denotes the minimal or maximal tensor product of C*-algebras. Unfortunately, unlike classical groups, there is in general no well-defined diagonal action by $\mathbb{G}$ on $C(\mathbb{G}) \otimes C(\mathbb{G})$. 
There are two ways to fix this issue: 
\begin{enumerate}
	\item Replacing the tensor product $C(\mathbb{G}) \otimes C(\mathbb{G})$ by an \emph{amalgamated free product} $C(\mathbb{G}) \ast_{\mathbb{C}} C(\mathbb{G})$. This results in a construction that we call the \emph{free noncommutative join}, denoted by $C(\mathbb{G}) {\mathrlap{+}\times} C(\mathbb{G})$ and, for the iterated version, $C(E_n^{{\mathrlap{+}\times}}\mathbb{G})$. Using this as the C*-algebra of the ``$n$-th multi-join'' in the above statement leads to our definition of the \emph{local-triviality dimension}. 
	\item Replacing the tensor product $C(\mathbb{G}) \otimes C(\mathbb{G})$ by a \emph{braided tensor product}, which admits a diagonal action. Equivalently, we may ``twist'' the above ``naive join'' by altering one of the two endpoints on $[0,1]$ in a suitable way and replace the diagonal action by the action on the second tensor factor alone, which is well defined. Either way, the resulting C*-algebra is called the \emph{equivariant noncommutative join}, denoted by $C(\mathbb{G}) \overset{\Delta}{\circledast} C(\mathbb{G})$ and, for the iterated version, $C(E_n^{\Delta}\mathbb{G})$, where $\Delta$ stands for the comultiplication in $C(\mathbb{G})$. Using this in place of the free noncommutative join, we arrive at the definition of the \emph{strong local-triviality dimension}, so named because it assumes greater values than the local-triviality dimension in general. 
\end{enumerate}
Both approaches generalize the local-triviality dimension in the classical setting, though they differ in general, even when the acting group is classical. An example is given by the antipodal action on the free spheres, which may be identified with the free noncommutative multi-joins $C(E_n^{{\mathrlap{+}\times}}{G})$ for $G$ being the two-element group. Compared to the strong local-triviality dimension, there seem to be fewer tools for giving lower bounds of the local-triviality dimension, as it turns out all the free noncommutative multi-joins, for $n>1$, has the same (equivariant) $K$-theory as the complex numbers. This $K$-theoretic computation generalizes a result of Nagy on the free $1$-sphere \cite{g-n94}. 

We remark that there is an equivalent definition of the local-triviality dimension using order zero maps, in a way similar to how the Rokhlin dimension is defined \cite{HWZ15}. This reformulation gives rise to another variant called the \emph{weak local-triviality dimension}. On the other hand, the Rokhlin dimension may also be reformulated in terms of the free noncommutative multi-joins. Thus the aforementioned $K$-theoretic computation may be seen as a reason why we have currently very few means to give lower bounds for the Rokhlin dimension. Moreover, from this perspective, the strong local-triviality dimension is analogous to the Rokhlin dimension with commuting towers (for which we do have obstructions that help give lower bounds). See Section~\ref{rokdimension} for more on these connections between the dimensions as well as a somewhat surprising computation of the Rokhlin dimension for actions by $p$-adic groups on commutative C*-algebras. 

As an evidence that our notion of local triviality dimension behaves in a desired manner, we prove that actions with finite local-triviality dimension are automatically free. We also illustrate our definition by calculating with a variety of examples.

A major motivation for our definition of local triviality is the Borsuk--Ulam-type conjecture. Recall that the classical Borsuk--Ulam theorem says there is no antipodal-equivariant continuous map from an $(n+1)$-sphere to an $n$-sphere, for any $n$. Using the join construction, this theorem is reformulated as: for any $n$, there is no equivariant continuous from $E_n {G} \ast G$ to $E_n {G}$, where $G$ is the two-element group. Analogous statements have been proved for other compact groups $G$. 

In \cite{BDH15}, Baum, D\k{a}browski, and Hajac stated a \emph{noncommutative Borsuk--Ulam-type conjecture}: for any action $\delta$ of a compact quantum group $\mathbb{G}$ on a unital C*-algebra $A$, there is no equivariant $*$-homomorphism from $A \overset{\delta}{\circledast} C(\mathbb{G})$ to $A$, where  $\overset{\delta}{\circledast}$ denotes an equivariant noncommutative join as above. This conjecture, if true even just in the commutative setting, would imply a weak version of the Hilbert-Smith conjecture in topology. On the other hand, progress in the noncommutative setting would give us new insight into the complexity of quantum principal bundles. In fact, from our perspective, this conjecture points to the key difficulty in giving lower bounds on the (strong) local-triviality dimension. 

As an application of our local-triviality dimension, we prove the noncommutative Borsuk--Ulam-type conjecture under the assumption that a compact quantum group admits a non-trivial classical subgroup whose induced action has finite local-triviality dimension. The idea is that the finiteness of the dimension allows us to reduce the problem to one where the compact quantum group $\mathbb{G}$ is replaced by its classical subgroup $H$ and $A$ is replaced by $E_n {H}$, but this latter case is already resolved by classical methods.

{\bf Convention:} All tensor products of C*-algebras are assumed to be minimal. All C*-algebras and $*$-homomorphisms are assumed to be unital unless otherwise stated.

\section{Different types of equivariant noncommutative joins}\label{prem}

\subsection{Preliminaries on joins and actions}

To begin with, we recall the definition of a join (e.g., see \cite[Chapter~0]{a-h02})) of two topological spaces.

\begin{df}
Let $X$ and $Y$ be topological spaces. 
The~{\em topological join} $X\ast Y$ 
of $X$ and $Y$ is defined as the following quotient
\[
X\ast Y:=\left(X\times Y\times [0,1]\right)/\sim\,,
\]
where the equivalence relation is given by
\begin{gather*}
(x,y,1)\sim (x',y,1)\quad\text{for all}\quad x, x'\in X,~y\in Y,\\
(x,y,0)\sim (x,y',0)\quad\text{for all}\quad x\in X,~y, y'\in Y.
\end{gather*}
\end{df}
The topological join construction is associative. It is also functorial in the following sense:
given two continuous maps $f:X\to W$ and $g:Y\to Z$ between topological spaces, 
there exists a continuous map $f\ast g:X\ast Y\to W\ast Z$.

If $X$ and $Y$ are equipped with a continuous free action 
of a topological group $G$, then the diagonal action on the join
is again free. This action is also continuous if $G$ is locally compact Hausdorff 
by a classical result due to Whitehead \cite[Lemma~4]{jhc-w58} (see \cite{e-m68} for a formulation of the result
using modern terminology). This turns $X\ast Y$ into a~free $G$-space.

Recall that, for a locally compact Hausdorff group $G$, the space $G\ast G$ is the first step in the Milnor construction~\cite{j-m56}
of a universal principal $G$-bundle $EG$ with its base space being a model of the classifying space $BG$
(it is true for an arbitrary group $G$ if, instead of the quotient topology, we put the Milnor topology~\cite{j-m56} on the join).
We introduce a concise notation for the multi-join of a topological group~$G$:
\[
E_0G:=G,\qquad E_nG:=\underbrace{G\ast\ldots\ast G}_{n+1}\,,\qquad n>0.
\]

\begin{df}
Let $X$ be a topological space.
The {\it unreduced cone} of $X$ is defined as the quotient
\[
\mathcal{C}X:=\left(X\times [0,1]\right)/\sim\,,
\]
where the equivalence relation is given by 
\[
(x,0)\sim(x',0)\quad\text{for all}\quad x,x'\in X.
\]
\end{df}

We can view the join of $X$ and $Y$ as a subspace of the product of cones:
\begin{equation}\label{jointhecone}
X\ast Y\cong\{([(t,x)],[(s,y)])\in\mathcal{C}X\times\mathcal{C}Y\;|\,t+s=1\}\subseteq \mathcal{C}X\times\mathcal{C}Y.
\end{equation}

Let now $X$ and $Y$ be compact Hausdorff spaces.
One can describe the unital commutative C*-algebras $C(\mathcal{C}X)$ and $C(X\ast Y)$ 
of continuous complex-valued functions on the unreduced
cone and the join respectively, in the following way:
\begin{gather*}
C(\mathcal{C}X)\cong\{f\in C([0,1],C(X))\;|\;f(0)\in\mathbb{C}\},\\
C(X\ast Y)\cong\{f\in C([0,1],C(X)\otimes C(Y))\;|\;f(0)\in C(X)\otimes\mathbb{C},\, f(1)\in \mathbb{C}\otimes C(Y)\}.
\end{gather*}

In the spirit of the celebrated Gelfand--Naimark theorem \cite[Lemma~1]{gn-43} and \cite[Theorems~8', Theorem~10]{i-g41}, 
one can think of unital commutative C*-algebras
as being equivalent to compact Hausdorff topological spaces. Then, the~study of noncommutative C*-algebras can be viewed
as noncommutative topology. Having this in mind, we now recall the unreduced cone and
the generalization of the join construction to the
noncommutative setting.

\begin{df}\label{nccone}
Let $A$ be a unital C*-algebra.
The {\em noncommutative unreduced cone} of $A$ is defined
as follows
\[
\mathcal{C}A:=\{f\in C([0,1],A)\;|\;f(0)\in\mathbb{C}\}.
\]
\end{df}

\begin{df}
Let $A$ and $B$ be unital C*-algebras.
One defines the {\em noncommutative join} $A\circledast B$ of $A$ and $B$ as follows
\[
A\circledast B:=\{f\in C([0,1],A\otimes B)\;|\;f(0)\in A\otimes\mathbb{C},\,f(1)\in \mathbb{C}\otimes B\}.
\]
\end{df}
The noncommutative join construction is associative and functorial. Indeed, given two 
$*$-homo\-mor\-phisms
$\varphi:A\to C$ and $\psi:B\to D$ of unital C*-algebras, there exists a
$*$-homomorphism $\varphi\circledast\psi:A\circledast B\to C\circledast D$.

As in the case of topological spaces, we introduce group symmetry into the picture.
An action of a topological group $G$ on a C*-algebra $A$ is a jointly continuous group homomorphism
$\alpha:G\to {\rm Aut}(A)$ and any C*-algebra equipped with an action of a~group $G$ is called a {\em $G$-C*-algebra}.
We also use the following notation
\[
A^G:=\{a\in A~|~\forall g\in G\,:\,\alpha_g(a)=a\}
\]
for the {\em fixed-point subalgebra} of $A$.
Notice that, for a compact Hausdorff space $X$, there is an isomorphism $C(X)^G\cong C(X/G)$.

One can verify that if $A$ and $B$ are two $G$-C*-algebras, then $A\circledast B$ is again 
a~$G$-C*-algebra with the diagonal action~of~$G$ defined using the functoriality of the join.

In the same way as C*-algebras generalize topological spaces, quantum groups generalize topological groups. Let us now
proceed to actions of compact quantum groups on unital C*-algebras. First, we recall basic definitions.

\begin{df}[\cite{Wo87}]
A {\em compact quantum group} $\mathbb{G}$~is a unital \mbox{C*-al}ge\-bra $C(\mathbb{G})$ 
together with a unital injective $*$-homomorphism
$\Delta:C(\mathbb{G})\to C(\mathbb{G})\otimes C(\mathbb{G})$ that is coassociative, i.e., $({\rm id}\otimes\Delta)
\Delta=(\Delta\otimes{\rm id})\Delta$,
and such that the two-sided cancellation property holds
\[
\{(a\otimes 1)\Delta(b)\;|\;a,b\in C(\mathbb{G})\}^{\rm cls}=C(\mathbb{G})\otimes C(\mathbb{G})
=\{(1\otimes a)\Delta(b)\;|\;a,b\in C(\mathbb{G})\}^{\rm cls},
\]
where ${\rm cls}$ denotes the closed linear span. 
\end{df}

If $G$ is a compact Hausdorff group,
then $C(G)$ is a compact quantum group. This example is the reason for the suggestive
notation $C(\mathbb{G})$ for the C*-algebra of the compact quantum group $\mathbb{G}$.
The group C*-algebra $C^*_r(\Gamma)$ of a discrete group $\Gamma$ is
another example. Here the coproduct is given by $
\Delta(\chi_g)=\chi_g\otimes\chi_g$\,, where $\chi_g$
is the generating unitary corresponding to $g\in\Gamma$.

Note that we assume that the coproduct $\Delta$ is injective. However, to the best of our knowledge, there is no
proof of its injectivity nor an example of a compact quantum group with a non-injective coproduct.

\begin{df}
Let $\mathbb{G}$ be a compact quantum group and let $A$ be a unital C*-algebra. 
A unital injective $*$-homomorphism $\delta:A\to A\otimes C(\mathbb{G})$ is called
a {\em coaction} of $C(\mathbb{G})$ on $A$ (or an {\em action} of $\mathbb{G}$ on $A$)
if and only if
\begin{enumerate}
\item $(\delta\otimes id)\circ\delta=(id\otimes\Delta)\circ\delta$ \quad{\rm (}coassociativity{\rm )},
\item $\{(1\otimes h)\delta(a)\;|\;a\in A, h\in C(\mathbb{G})\}^{\rm cls}=A\otimes C(\mathbb{G})$ \quad {\rm (}counitality{\rm )}.
\end{enumerate}
A C*-algebra $A$ equipped with a coaction of $C(\mathbb{G})$
is called a \emph{$\mathbb{G}$-C*-algebra}. 
\end{df}
In~the case of a coaction, the fixed-point subalgebra is defined as follows
\[
A^\mathbb{G}:=\{a\in A~|~\delta(a)=a\otimes 1\}.
\]

Similarly as for the coproduct, we assume that the coaction $\delta$ is injective,
but here the situation differs as there are examples of non-injective coactions 
(see, e.g. \cite[Proposition~4.1]{Sol11}).
Nevertheless, such coactions in the classical case would correspond to actions
in which the neutral element of the group does not act as the identity.
To exclude these examples, one introduces {\em minimal reduced coactions} \cite{Sol11}. 
Suppose that $\delta:A\to A\otimes C(\mathbb{G})$
is a non-injective action of a compact quantum group $\mathbb{G}$ on a unital C*-algebra $A$.
One defines the minimal reduced coaction by $\overline{\delta}:\overline{A}\to\overline{A}\otimes C(\mathbb{G})$,
where $\overline{A}:=A/\ker\delta$. This coaction is well defined (since $\ker\delta$ is $\mathbb{G}$-invariant)
and injective due to the injectivity of the coproduct \cite[Theorem~3.3]{Sol11}. Throughout the paper,
by a coaction we always mean the minimal reduced coaction.

\begin{df}[\cite{El00}]\label{freeaction}
Let $\delta\colon A\to A\otimes C(\mathbb{G})$ be an action of $\mathbb{G}$ on $A$. 
We say that $\delta$ is \emph{free}~if and only if
\begin{equation}
\{(a\otimes 1_{C(\mathbb{G})})\delta(a)\;|\;a\in A\}^{\rm cls}=A\otimes C(\mathbb{G}).
\end{equation} 
\end{df}

A basic example of a free action in the above sense is the canonical translation action given 
by the coproduct $\Delta\colon C(\mathbb{G})\to C(\mathbb{G})\otimes C(\mathbb{G})$.
Then the Ellwood condition for $\Delta$ is satisfied due to the left-sided cancellation property.
One can show that \eqref{freeaction} generalizes free actions of groups on spaces (e.g., see \cite[Theorem~2.9]{El00}).

Let us now recall the notion of an equivariant noncommutative join of C*-algebras that plays a crucial role in
the noncommutative Borsuk--Ulam-type conjecture \cite{BDH15}. We shall return to this conjecture in Section~\ref{but}.

\begin{df}[\cite{DHH15}]
Let $A$ be a unital $\mathbb{G}$-C*-algebra with a coaction $\delta:A\to A\otimes C(\mathbb{G})$. 
The {\em equivariant noncommutative join} $A\circledast^\delta C(\mathbb{G})$~ of $A$ and $C(\mathbb{G})$ 
is defined as follows
\[
A\overset{\delta}{\circledast} C(\mathbb{G}):=\{f\in C([0,1], A\otimes C(\mathbb{G}))\;|\;f(0)\in \delta(A),f(1)\in\mathbb{C}\otimes C(\mathbb{G})\}.
\]
\end{df}

The above type of a join was introduced as a remedy for the lack of diagonal actions of compact quantum groups
on tensor products of C*-algebras. As an alternative, one could follow \cite{nv10} and use braided tensor products.
In \cite[Corollary~5.6]{BCH17}, it was shown that, given a unital free $\mathbb{G}$-C*-algebra $A$, 
an analog of the diagonal action on $A\circledast^\delta C(\mathbb{G})$ is again free. 

Let $n$ be a nonnegative integer. In analogy with the classical case, we fix the following notation:
\[
C(E_0^{\Delta}\mathbb{G}):=C(\mathbb{G}),\qquad 
C(E_n^{\Delta}\mathbb{G}):=\underbrace{C(\mathbb{G})\overset{\Delta}{\circledast}\ldots\overset{\Delta}{\circledast} C(\mathbb{G})}_{n+1}.
\]

\subsection{The free noncommutative join}\label{freejoin}

In line with the above constructions, we introduce 
a new type of a noncommutative join of C*-algebras using the notion of 
the amalgamated free product $A\ast_D B$ (e.g., see \cite[Definition~II.8.3.5]{b-b06}) 
of unital C*-algebras $A$ and $B$ over their unital $*$-subalgebra~$D$.
This new type of a join will be used to build an $n$-universal locally trivial quantum principal bundle in Section~\ref{qpb}.

\begin{df}\label{freejoin}
Let $A$ and $B$ be unital C*-algebras. The {\em free noncommutative join} of $A$ and $B$ is defined by
\[
A{\mathrlap{+}\times} B:=\{f\in C([0,1],A\ast_{\mathbb{C}} B):f(0)\in A,f(1)\in B\}.
\]
\end{df}

The above construction is associative and functorial 
by the associativity and universality of the amalgamated free product respectively.

Notice that, if $A$ and $B$ are $\mathbb{G}$-C*-algebras with coactions $\delta_A$ and $\delta_B$ respectively,
we can define a {\em diagonal} coaction $\delta_{A\ast_\mathbb{C}B}$. Indeed, let $\iota_A$ and $\iota_B$ be the 
inclusions of $A$ and $B$ in $A\ast_\mathbb{C}B$ respectively. We define the following $*$-homomorphisms
\[
(\iota_A\otimes{\rm id})\circ\delta_A:A\longrightarrow (A\ast_\mathbb{C}B)\otimes C(\mathbb{G}),
\]
\[
(\iota_B\otimes{\rm id})\circ\delta_B:B\longrightarrow (A\ast_\mathbb{C}B)\otimes C(\mathbb{G}).
\]
Then, by the universal property of the amalgamated free product, we obtain the $*$-homomorphism
\[
\delta_{A\ast_\mathbb{C} B}: (A\ast_\mathbb{C}B) \longrightarrow (A\ast_\mathbb{C}B)\otimes C(\mathbb{G}).
\]
It is straightforward to check that $\delta_{A\ast_\mathbb{C} B}$ 
satisfy the coassociativity and counitality conditions. In general, this coaction might not be injective.
In such cases we always consider the minimal reduced coaction.
Using the above coaction, 
one can also define a (minimal reduced) diagonal coaction of $C(\mathbb{G})$ on $A{\mathrlap{+}\times} B$.

We introduce the following notation:
\[
C(E_0^{{\mathrlap{+}\times}}\mathbb{G}):=C(\mathbb{G}),\qquad 
C(E_n^{{\mathrlap{+}\times}}\mathbb{G}):=\underbrace{C(\mathbb{G}){\mathrlap{+}\times}\ldots{\mathrlap{+}\times} C(\mathbb{G})}_{n+1},\qquad n\in\mathbb{N}\setminus\{0\}.
\]

Let us now prove an analogous result to (\ref{jointhecone}) for the free noncommutative join.
Since we were not able to find it in the literature, we first provide a proof in the case of the ordinary
noncommutative join. 

\begin{thm}\label{thmjointhecone2}
Let $A$ and $B$ be unital C*-algebras.
There is an isomoprhism
\begin{equation}\label{jointhecone2}
A\circledast B\cong \left(\mathcal{C}A\otimes\mathcal{C}B\right)/\langle {\rm t}_1\otimes 1+1\otimes {\rm t}_2-1\otimes 1\rangle,
\end{equation}
where ${\rm t}_1$ and ${\rm t}_2$ denote the inclusion of the half-open interval $(0,1]$ into $\mathbb{C}$
in $\mathcal{C}A$ and $\mathcal{C}B$ respectively.
\end{thm}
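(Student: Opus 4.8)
The plan is to realize both sides as algebras of continuous $A\otimes B$-valued functions and to recognize the quotient map as restriction to an antidiagonal, exactly dualizing~(\ref{jointhecone}). First I would record the concrete model
\[
\mathcal{C}A\otimes\mathcal{C}B\;\cong\;\mathcal{D}:=\{F\in C([0,1]^2,A\otimes B)\mid F(0,t)\in\mathbb{C}\otimes B,\ F(s,0)\in A\otimes\mathbb{C}\},
\]
where $s$ and $t$ are the cone parameters of the two tensor factors. Using the identification $C([0,1],A)\otimes C([0,1],B)\cong C([0,1]^2,A\otimes B)$ (legitimate since $C([0,1])$ is nuclear), every elementary tensor $f\otimes g$ clearly lands in $\mathcal{D}$, giving $\mathcal{C}A\otimes\mathcal{C}B\subseteq\mathcal{D}$. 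For the reverse inclusion I would compare the two algebras modulo their common ideal $C_0((0,1]^2,A\otimes B)$ of functions vanishing on both axes, verifying that each quotient is the pullback $\{(f,g)\in\mathcal{C}A\oplus\mathcal{C}B\mid f(0)=g(0)\}$ and that the inclusion induces a surjection of quotients; this step is routine but fiddly.

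Under this model the distinguished element is transparent: $h_0:={\rm t}_1\otimes1+1\otimes{\rm t}_2-1\otimes1$ becomes the central, scalar-valued function $(s,t)\mapsto(s+t-1)\,1_{A\otimes B}$, whose zero set is precisely the antidiagonal $Z=\{s+t=1\}$, the C*-dual of the relation $t+s=1$ in~(\ref{jointhecone}). I would then define the $*$-homomorphism
\[
\Phi\colon\mathcal{C}A\otimes\mathcal{C}B\longrightarrow A\circledast B,\qquad\Phi(F)(r):=F(1-r,r),
\]
the restriction of $F$ to $Z$ read off through the parametrization $r\mapsto(1-r,r)$ of $Z$ by $[0,1]$. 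The defining conditions of $\mathcal{D}$ become the endpoint conditions of $A\circledast B$: the point $(1,0)$ lies on $\{t=0\}$, so $\Phi(F)(0)\in A\otimes\mathbb{C}$, and $(0,1)$ lies on $\{s=0\}$, so $\Phi(F)(1)\in\mathbb{C}\otimes B$. By construction $\ker\Phi=\{F\in\mathcal{D}\mid F|_Z=0\}=:N$, so it remains to prove that $\Phi$ is surjective and that $N=\langle h_0\rangle$.

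For surjectivity I would extend a given $h\in A\circledast B$ across the square by hand. On the lower triangle $\{s+t\le1\}$ I set $F(s,t):=(s+t)\,h\!\big(t/(s+t)\big)$ with $F(0,0):=0$; the factor $s+t$ annihilates the directional discontinuity of the radial pullback at the origin, so $F$ is continuous, it restricts to $h$ on $Z$, and one checks $F(s,0)=s\,h(0)\in A\otimes\mathbb{C}$ and $F(0,t)=t\,h(1)\in\mathbb{C}\otimes B$. On the upper triangle $\{s+t\ge1\}$, where no axis constraint intervenes, I extend the antidiagonal values $h|_Z$ by a Banach-space-valued Tietze extension; the two pieces agree on $Z$ and glue to an element $F\in\mathcal{D}=\mathcal{C}A\otimes\mathcal{C}B$ with $\Phi(F)=h$.

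The hard part will be the identity $N=\langle h_0\rangle$, i.e.\ that a single central generator already generates the entire ideal of functions vanishing on $Z$. The inclusion $\langle h_0\rangle\subseteq N$ is immediate. For the converse, given $F\in N$, I would exploit that $h_0=\phi\cdot1$ with $\phi(s,t)=s+t-1$ real-valued and central: for $\epsilon>0$ the scalar function $m_\epsilon:=\phi/(\phi^2+\epsilon)$ yields $m_\epsilon\cdot1\in\mathcal{D}$ and
\[
h_0\cdot(m_\epsilon F)=\frac{\phi^2}{\phi^2+\epsilon}\,F\in\langle h_0\rangle.
\]
A standard estimate---splitting $[0,1]^2$ into the region $\{|\phi|\ge\delta\}$, where $\epsilon/(\phi^2+\epsilon)\to0$ uniformly, and the region $\{|\phi|<\delta\}$, where $\|F\|$ is uniformly small because $F$ is continuous and vanishes on $Z$---shows $\tfrac{\phi^2}{\phi^2+\epsilon}F\to F$ in norm, whence $F\in\langle h_0\rangle$. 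The centrality of $\phi$ is exactly what makes this division argument legitimate beyond the commutative case. Combining the three facts, $\Phi$ descends to the desired isomorphism $(\mathcal{C}A\otimes\mathcal{C}B)/\langle{\rm t}_1\otimes1+1\otimes{\rm t}_2-1\otimes1\rangle\xrightarrow{\ \cong\ }A\circledast B$.
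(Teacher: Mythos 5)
Your proof is correct and follows essentially the same route as the paper's: both identify $\mathcal{C}A\otimes\mathcal{C}B$ with the algebra of $A\otimes B$-valued functions on $[0,1]^2$ satisfying the two axis conditions, and realize the quotient by $\langle {\rm t}_1\otimes 1+1\otimes {\rm t}_2-1\otimes 1\rangle$ as restriction to the antidiagonal $\{s+t=1\}$. The paper deduces this in one step from the $C([0,1]^2)$-algebra structure, whereas you supply the details it leaves implicit (surjectivity of the restriction via your explicit extension, and the identification of the kernel with $\langle h_0\rangle$ via the $\phi/(\phi^2+\epsilon)$ division argument), all of which check out.
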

\begin{proof}
Using Definition~\ref{nccone}, we obtain an isomorphism
\[
\mathcal{C}A\otimes\mathcal{C}B\cong
\{f\in C([0,1]^2,A\otimes B)\;|\;f(t_1,0)\in A\otimes\mathbb{C},\,f(0,t_2)\in\mathbb{C}\otimes B\}.
\]
Observe that $\mathcal{C}A\otimes\mathcal{C}B$ 
has a natural structure of a $C(X)$-algebra, where $X=[0,1]^2$, in the sense of \cite[Definition~1.5]{Kas88}. Consider the ideal
\[
I=\langle F\rangle=\langle {\rm t}_1\otimes 1+1\otimes {\rm t}_2-1\otimes 1\rangle.
\]
Function $F$ vanishes exactly at the points of the standard one-dimensional simplex 
$\Delta^1=\{(s,t)\in[0,1]^2\;|\:s+t=1\}$.
Therefore, we have that
\begin{align*}
(\mathcal{C}A\otimes\mathcal{C}B)/I
&\cong\{f\in C(\Delta^1,A\otimes B)\;|\;f(t_1,0)\in A\otimes\mathbb{C},\,f(0,t_2)\in\mathbb{C}\otimes B\}\\
&\cong\{f\in C([0,1],A\otimes B)\;|\;f(0)\in A\otimes\mathbb{C},\,f(1)\in \mathbb{C}\otimes B\}=A\circledast B.
\end{align*}
\end{proof}

Next, we prove a similar result for the free join $A{\mathrlap{+}\times}B$.
In fact we prove even more, namely that $A{\mathrlap{+}\times} B$ is also isomorphic to the amalgamated free product
$\mathcal{C}A\ast_{C([0,1])}\mathcal{C}B$ by means of
the unital $*$-homomorphisms given on generators by
\begin{equation}\label{t1t2}
f_1:C([0,1])\to \mathcal{C}A\;:\;{\rm t}\mapsto 1-{\rm t}_1,\quad f_2:C([0,1])\to \mathcal{C}B\;:\; {\rm t}\mapsto {\rm t}_2.
\end{equation}
Here ${\rm t}$ is the inclusion function from $(0,1]$ into $\mathbb{C}$ generating $C_0((0,1])$ as a \mbox{C*-al}gebra
and ${\rm t}_i$\,, $i=1,2$, are defined as in Theorem~\ref{thmjointhecone2}.

\begin{thm}\label{jointhecone3}
Let $A$ and $B$ be unital C*-algebras. We have the following 
isomorphisms
\[
A{\mathrlap{+}\times} B\cong
\left(\mathcal{C}A\ast_\mathbb{C} \mathcal{C}B\right)/\langle {\rm t}_1+{\rm t}_2-1\rangle\cong 
\mathcal{C}A\ast_{C([0,1])}\mathcal{C}B.
\]
\end{thm}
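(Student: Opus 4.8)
The plan is to establish the two isomorphisms in turn, the second being formal and the first requiring a fiberwise analysis. Throughout, write $C:=(\mathcal{C}A\ast_{\mathbb{C}}\mathcal{C}B)/\langle{\rm t}_1+{\rm t}_2-1\rangle$ and let $\iota_A,\iota_B$ denote the canonical inclusions of $A$ and $B$ into $A\ast_{\mathbb{C}}B$. For the \emph{second} isomorphism I would simply match universal properties. The maps $f_1,f_2$ of \eqref{t1t2} are injective, with images the central scalar functions, so the amalgamated free product $\mathcal{C}A\ast_{C([0,1])}\mathcal{C}B$ is well defined, and by its universal property a unital $*$-homomorphism out of it into a unital C*-algebra $E$ is the same datum as a pair $(\alpha,\beta)$ of unital $*$-homomorphisms $\alpha\colon\mathcal{C}A\to E$, $\beta\colon\mathcal{C}B\to E$ with $\alpha\circ f_1=\beta\circ f_2$, that is, $\alpha(1-{\rm t}_1)=\beta({\rm t}_2)$, which by unitality is exactly $\alpha({\rm t}_1)+\beta({\rm t}_2)=1$. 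A unital $*$-homomorphism out of $C$ corresponds to the same kind of pair. Since both algebras represent the same functor, Yoneda gives a canonical isomorphism compatible with the inclusions of $\mathcal{C}A$ and $\mathcal{C}B$.

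For the \emph{first} isomorphism I would build an explicit comparison map $\Phi\colon C\to A\freejoin B$ and prove it is an isomorphism. Define unital $*$-homomorphisms $\Phi_A\colon\mathcal{C}A\to A\freejoin B$ and $\Phi_B\colon\mathcal{C}B\to A\freejoin B$ by $\Phi_A(g)(t)=\iota_A(g(1-t))$ and $\Phi_B(h)(t)=\iota_B(h(t))$; the conditions $g(0),h(0)\in\mathbb{C}$ guarantee that the endpoint values lie in $A$ and $B$ as required, so these indeed land in $A\freejoin B$. Since $\Phi_A({\rm t}_1)$ is the function $t\mapsto(1-t)1$ and $\Phi_B({\rm t}_2)$ the function $t\mapsto t1$, we have $\Phi_A({\rm t}_1)+\Phi_B({\rm t}_2)=1$, so by the universal property of $\ast_{\mathbb{C}}$ together with the quotient these assemble into a unital $*$-homomorphism $\Phi\colon C\to A\freejoin B$. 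Both $C$ and $A\freejoin B$ are $C([0,1])$-algebras in the sense of \cite[Definition~1.5]{Kas88} — the first via the image of ${\rm t}_1$, which is central in $C$ because it is central in $\mathcal{C}A$ and equals $1-{\rm t}_2$ with ${\rm t}_2$ central in $\mathcal{C}B$, and the second via the scalar-valued functions — and $\Phi$ intertwines these two structures along the orientation-reversing homeomorphism $t\mapsto 1-t$ of $[0,1]$.

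The crux is then to show that $\Phi$ is a fiberwise isomorphism, after which I would invoke the standard fact that a $C([0,1])$-linear $*$-homomorphism inducing an isomorphism on each fiber is itself an isomorphism (injectivity from $\|c\|=\sup_{s}\|c_{s}\|$, surjectivity from a partition-of-unity gluing). The fibers of $A\freejoin B$ are read off by evaluation: $A$ at $t=0$, $A\ast_{\mathbb{C}}B$ for $t\in(0,1)$, and $B$ at $t=1$. For the fibers of $C$ I would again argue via universal properties: the fiber $C_{s}$ represents those pairs $(\alpha,\beta)$ with $\alpha({\rm t}_1)=s1$ and $\beta({\rm t}_2)=(1-s)1$. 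For $s\in(0,1)$ the relation $\alpha({\rm t}_1)=s1$ forces $\alpha$ to factor through the fiber $(\mathcal{C}A)_{s}\cong A$ (via evaluation at $s$) and likewise $\beta$ through $(\mathcal{C}B)_{1-s}\cong B$, and no relation between the two survives, so that $C_{s}\cong A\ast_{\mathbb{C}}B$; the endpoints give $C_{0}\cong B$ and $C_{1}\cong A$. Tracing the identifications shows each induced map $\Phi_{s}$ is the identity, hence an isomorphism.

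I expect the main obstacle to be precisely this fiber computation for $C$. Unlike the tensor-product case of Theorem~\ref{thmjointhecone2}, where the concrete realization on $[0,1]^2$ makes the restriction to the simplex transparent, the free product admits no such function-algebra picture, so the argument must proceed through the universal property of the fiber and the fact that fixing the central coordinate forces factorization through the fibers of $\mathcal{C}A$ and $\mathcal{C}B$. The delicate point is verifying that \emph{no extra relations survive} in $C_{s}$, i.e.\ that the fiber is genuinely $A\ast_{\mathbb{C}}B$ and not a proper quotient; once this is secured, the passage from a fiberwise isomorphism to a global isomorphism is routine $C(X)$-algebra theory.
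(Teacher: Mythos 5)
Your proposal is correct and takes essentially the same route as the paper: the second isomorphism via matching universal properties, and the first via the orientation-reversing comparison map $g\mapsto g(1-t)$, viewing both sides as $C([0,1])$-algebras, identifying the fibers as $A$, $A\ast_{\mathbb{C}}B$, and $B$, and concluding by the standard fact that a fiberwise isomorphism of $C([0,1])$-algebras is an isomorphism. The delicate point you flag at the end is in fact already settled by your own universal-property description of the fibers (a homomorphism kills the fiber ideal iff it kills its central generators, which forces exactly the factorization through $(\mathcal{C}A)_s$ and $(\mathcal{C}B)_{1-s}$ and nothing more), so no gap remains.
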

\begin{proof} 
First, note that the second isomorphism follows from the universal properties of $\mathcal{C}A\ast_\mathbb{C} \mathcal{C}B$
and $\mathcal{C}A\ast_{C([0,1])}\mathcal{C}B$. Therefore, it suffices to show that $A{\mathrlap{+}\times}B$ is isomorphic to
$\mathcal{C}A\ast_{C([0,1])}\mathcal{C}B$. 

Define two $*$-homomorphisms
\[
\psi_1:\mathcal{C}A\to A {\mathrlap{+}\times} B,\quad (\psi_1(a))(t):=a(1-t),
\]
\[
\psi_2:\mathcal{C}B\to A {\mathrlap{+}\times} B,\quad (\psi_1(b))(t):=b(t).
\]
Let $f_1$ and $f_2$ be defined as in \eqref{t1t2}. Then,
$\psi_1\circ f_1=\psi_2\circ f_2$,
and by the universal property of $\mathcal{C}A\ast_{C([0,1])}\mathcal{C}B$, the maps $\psi_1$ and
$\psi_2$ give rise to a unital $*$-homomorphism
\[
\psi:\mathcal{C}A\ast_{C([0,1])}\mathcal{C}B\to A {\mathrlap{+}\times} B.
\]

We will show that the above map is an isomorphism. For this purpose, observe that
both the domain and the codomain of $\psi$ can be viewed as $C(X)$-algebras
(see \cite[Definition~1.5]{Kas88}), where $X=[0,1]$.
Indeed, if we denote by $\iota$ and $\iota'$ the inclusions of $C([0,1])$ 
in $\mathcal{C}A\ast_{C([0,1])}\mathcal{C}B$ and 
$A {\mathrlap{+}\times} B$ respectively, their images land in the centers, namely 
\begin{equation}\label{incent}
\iota(C([0,1]))\subseteq Z(\mathcal{C}A\ast_{C([0,1])}\mathcal{C}B),\quad\iota'(C([0,1]))\subseteq Z(A {\mathrlap{+}\times} B).
\end{equation}

Any isomorphism of $C(X)$-algebras over the same space is given by an
isomorphism of the C*-algebras over the fibers via the fiber-preserving map.
Let us now describe the fibers of the $C(X)$-algebras under consideration.
Fix a $t_0\in [0,1]$. One defines
\[
I_{t_0}:=\iota(C_0([0,1]\setminus\{t_0\}))\cdot\left(\mathcal{C}A\ast_{C([0,1])}\mathcal{C}B\right)
\]
and subsequently
\[
\left.\left(\mathcal{C}A\ast_{C([0,1])}\mathcal{C}B\right)\right|_{t_0}
:=\left(\mathcal{C}A\ast_{C([0,1])}\mathcal{C}B\right)/I_{t_0}\,.
\]
This quotient is well defined because of the first inclusion in \eqref{incent}.
Taking advantage of \eqref{t1t2}, we observe that
\[
\left.\left(\mathcal{C}A\ast_{C([0,1])}\mathcal{C}B\right)\right|_{t_0}\cong 
\left.\mathcal{C}A\right|_{1-t_0}\ast_\mathbb{C}\left.\mathcal{C}B\right|_{t_0}\cong
\begin{cases}A & t_0=0\\ A\ast_\mathbb{C}B & t_0\in (0,1)\\B & t_0=1\end{cases}.
\]

Similarly, one can define $I'_{t_0}$ (using the inclusion $\iota'$) and $\left.\left(A {\mathrlap{+}\times} B\right)\right|_{t_0}$\,. 
It is straightforward to see that
\[
I'_{t_0}\cong\{f\in C([0,1],A\ast_\mathbb{C}B)~|~f(0)\in A, f(1)\in B, f(t_0)=0\}
\]
and subsequently
\[
\left.\left(A {\mathrlap{+}\times} B\right)\right|_{t_0}\cong
\begin{cases}A & t_0=0\\ A\ast_\mathbb{C}B & t_0\in (0,1)\\B & t_0=1\end{cases}.
\]

Now, $\psi$ induces a $*$-homomorphism between fibers
\[
\psi_{t_0}:\left.\mathcal{C}A\right|_{1-t_0}\ast_\mathbb{C}\left.\mathcal{C}B\right|_{t_0}
\to\left.\left(A {\mathrlap{+}\times} B\right)\right|_{t_0}
\]
and it suffices to show that $\psi_{t_0}$ is an isomorphism for every $t_0\in[0,1]$.
This can be done using a standard partition-of-unity argument for $[0,1]$.
\end{proof}
\begin{ex}[Free spheres]\label{frees}
The $n$-dimensional free sphere $C(S^n_+)$ is defined as the unital universal C*-algebra generated by $n+1$
elements $x_0$, $x_1$, $\ldots$, $x_n$ subject to relations
$$
x^*_i=x_i,\quad i=0,\ldots,n,\qquad\sum_{i=0}^nx^2_i=1.
$$
Our aim is to show that any free noncommutative sphere is isomorphic to an iterated free noncommutative join 
of $C(\mathbb{Z}/2\mathbb{Z})$, namely that
$$
C(S^n_+)\cong C(E_n^{\mathrlap{+}\times}\mathbb{Z}/2\mathbb{Z}).
$$
By Theorem \ref{jointhecone3}, we have that
\[
C(E_n^{\mathrlap{+}\times}\mathbb{Z}/2\mathbb{Z})\cong 
(\,\underbrace{\mathcal{C}C(\mathbb{Z}/2\mathbb{Z})\ast_\mathbb{C}\ldots
\ast_\mathbb{C}\mathcal{C}C(\mathbb{Z}/2\mathbb{Z}\,)}_{n+1})/I,
\]
where $I$ is the ideal generated by $1-\sum_{i=0}^n {\rm t}_i$. Here each ${\rm t}_i$ denotes the inclusion of the half-open 
interval $(0,1]$ into $\mathbb{C}$ in a different copy of $\mathcal{C}C(\mathbb{Z}/2\mathbb{Z})$. Note that
$$
\mathcal{C}C(\mathbb{Z}/2\mathbb{Z})\cong C(\mathcal{C}\mathbb{Z}/2\mathbb{Z})\cong C([-1,1]),
$$
where we view $C([-1,1])$ as the unital universal algebra generated by one self-adjoint element $p$ such that $\|p\|\leq 1$.
There is a $\mathbb{Z}/2\mathbb{Z}$-action on $C([-1,1])$ given by $p\mapsto -p$.
If we denote by ${\rm t}$ the inclusion of the half-open interval $(0,1]$ into $\mathbb{C}$ in $\mathcal{C}C(\mathbb{Z}/2\mathbb{Z})$,
then $p^2={\rm t}$.

We have a natural well-defined $\mathbb{Z}/2\mathbb{Z}$-equivariant $*$-homomorphism
\begin{gather*}
\Phi:C(S^n_+)\to \left(C([-1,1])^{\ast_\mathbb{C}n}\right)/\left\langle \Sigma_{i=0}^n\; p^2_i-1\right\rangle,\\
x_i\mapsto p_i,\qquad i=0,1,\ldots,n,
\end{gather*}
where each $p_i$ is the generator of the $i$th copy of $C([-1,1])$ in the iterated amalgamated free product.
By the universality of the free product, we find the inverse of the above map and we conclude the isomorphism.\hfill$\diamond$
\end{ex}

We end this section by showing that the $K$-theory of any free noncommutative join is the same as that of $\mathbb{C}$. 
\begin{thm}
Let $A$ and $B$ be unital C*-algebras. The embedding 
$\mathbb{C}\hookrightarrow A{\mathrlap{+}\times}B$
induces isomorphisms of the $K$-theory groups:
\[
K_0(A{\mathrlap{+}\times}B)=\mathbb{Z},\qquad K_1(A{\mathrlap{+}\times}B)=0,
\]
where $K_0(A{\mathrlap{+}\times}B)$ is generated by $[1]$.
\end{thm}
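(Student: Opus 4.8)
The plan is to combine a single six-term exact sequence coming from the endpoint evaluations with Cuntz's computation of the $K$-theory of a full free product. Write $D:=A\ast_\C B$ and let $\iota_A\colon A\hookrightarrow D$ and $\iota_B\colon B\hookrightarrow D$ be the canonical inclusions. Evaluating a function at the two endpoints gives a surjective unital $*$-homomorphism $\rho\colon A\freejoin B\to A\oplus B$, $f\mapsto(f(0),f(1))$ (surjectivity is witnessed by paths such as $t\mapsto(1-t)\iota_A(a)+t\,\iota_B(b)$), whose kernel is the set of functions vanishing at both endpoints, namely the suspension $SD=C_0((0,1))\otimes D$. This produces the extension
\begin{equation}\label{fj-ses}
0\longrightarrow SD\longrightarrow A\freejoin B\xrightarrow{\ \rho\ }A\oplus B\longrightarrow 0,
\end{equation}
which presents $A\freejoin B$ as the double mapping cylinder of $A\xrightarrow{\iota_A}D\xleftarrow{\iota_B}B$.

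Next I would write down the six-term exact sequence of \eqref{fj-ses} and use the suspension isomorphism $K_i(SD)\cong K_{i-1}(D)$ to turn it into
\[
K_1(D)\xrightarrow{\ a\ }K_0(A\freejoin B)\xrightarrow{\ \rho_\ast\ }K_0(A)\oplus K_0(B)\xrightarrow{\ \delta_0\ }K_0(D)\xrightarrow{\ b\ }K_1(A\freejoin B)\xrightarrow{\ \rho_\ast\ }K_1(A)\oplus K_1(B)\xrightarrow{\ \delta_1\ }K_1(D)\to\cdots,
\]
where $a,b$ are induced by the inclusion $SD\hookrightarrow A\freejoin B$ and $\delta_0,\delta_1$ are the index and exponential maps. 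The decisive point is that, after the suspension identification, both connecting maps are the difference $(\iota_A)_\ast-(\iota_B)_\ast$ of the maps induced by the two inclusions; equivalently, \eqref{fj-ses} realizes the Mayer--Vietoris sequence of the double mapping cylinder. I expect this identification of the boundary maps to be the main obstacle. If one prefers not to quote the Mayer--Vietoris sequence for such homotopy pullbacks, it can be verified by factoring $\rho$ through the evaluation $A\freejoin B\to A$, whose kernel is the mapping cone of $\iota_B$ (and symmetrically for $B$); the connecting map of a mapping-cone extension is, by construction, the map induced by the homomorphism being coned, and a short diagram chase then assembles these into $(\iota_A)_\ast-(\iota_B)_\ast$.

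Finally I would invoke Cuntz's theorem on the $K$-theory of full free products of unital C*-algebras: the inclusions $\iota_A,\iota_B$ fit into a split short exact sequence
\[
0\longrightarrow K_n(\C)\xrightarrow{\ \mathrm{diag}\ }K_n(A)\oplus K_n(B)\xrightarrow{\ (\iota_A)_\ast-(\iota_B)_\ast\ }K_n(D)\longrightarrow 0,\qquad n=0,1,
\]
where the first map sends $[1]$ to $([1_A],[1_B])$. In particular $\delta_0=\delta_1=(\iota_A)_\ast-(\iota_B)_\ast$ is surjective, so exactness forces $a=0$ and $b=0$; hence $\rho_\ast$ is injective in both degrees, with image $\ker\delta_n=\mathrm{im}(\mathrm{diag})\cong K_n(\C)$. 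Therefore $\rho_\ast$ identifies $K_n(A\freejoin B)$ with $K_n(\C)$, giving $K_0(A\freejoin B)\cong\mathbb{Z}$ and $K_1(A\freejoin B)=0$. Since composing the unital embedding $\C\hookrightarrow A\freejoin B$ with $\rho$ yields exactly the diagonal $\C\to A\oplus B$, this embedding realizes the isomorphism, and $[1]$ generates $K_0(A\freejoin B)$, as claimed.
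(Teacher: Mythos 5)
Your setup is fine: the endpoint-evaluation extension $0\to S(A\ast_\C B)\to A\freejoin B\to A\oplus B\to 0$ is correct, and the identification of the two connecting maps with $(\iota_A)_\ast-(\iota_B)_\ast$ (Mayer--Vietoris for the double mapping cylinder) is a standard fact that can indeed be checked via mapping cones as you indicate. The fatal step is the final one: the ``split short exact sequence'' $0\to K_n(\C)\to K_n(A)\oplus K_n(B)\to K_n(A\ast_\C B)\to 0$ is \emph{false} for general unital $A$ and $B$. The correct statement (Cuntz, Germain, or Thomsen's theorem specialized to amalgamation over $\C$) is a six-term exact sequence with $K_\ast(\C)$ in the corners; it collapses to your split sequences only when $([1_A],[1_B])$ has infinite order in $K_0(A)\oplus K_0(B)$. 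Take $A=B=\mathcal{O}_2$: then $K_0(A)\oplus K_0(B)=0$, so $K_0(\C)=\mathbb{Z}$ cannot inject into it, and the six-term sequence actually gives $K_0(\mathcal{O}_2\ast_\C\mathcal{O}_2)=0$ and $K_1(\mathcal{O}_2\ast_\C\mathcal{O}_2)\cong\mathbb{Z}$. Consequently, whenever both $[1_A]$ and $[1_B]$ are torsion, your $\delta_1$ is not surjective, so the map $a\colon K_1(A\ast_\C B)\to K_0(A\freejoin B)$ is nonzero, and your exact sequence only yields an extension $0\to\mathbb{Z}\to K_0(A\freejoin B)\to\mathbb{Z}/m\to 0$ (with $m$ the order of $([1_A],[1_B])$; try $A=B=\mathcal{O}_3$, where $m=2$). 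The sequence alone cannot decide whether this extension is $\mathbb{Z}$ or $\mathbb{Z}\oplus\mathbb{Z}/m$, so the argument does not close. Your proof is complete only under the extra hypothesis that $A$ or $B$ has a unit of infinite order in $K_0$ (e.g.\ admits a trace or a character), which the theorem does not assume.

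The paper sidesteps this entirely by decomposing $A\freejoin B$ differently: via Theorem~\ref{jointhecone3} it identifies $A\freejoin B$ with the amalgamated free product $\mathcal{C}A\ast_{C([0,1])}\mathcal{C}B$ and applies Thomsen's six-term sequence to \emph{that} pushout, whose three ingredients $C([0,1])$, $\mathcal{C}A$, $\mathcal{C}B$ all have the $K$-theory of $\C$ generated by $[1]$. There the relevant map $\mathbb{Z}\to\mathbb{Z}\oplus\mathbb{Z}$ really is the injective diagonal, both exotic terms vanish, and no extension problem arises. If you want to salvage your route, you would have to feed the genuine six-term sequence for $K_\ast(A\ast_\C B)$ into your Mayer--Vietoris sequence and then find an independent argument resolving the extension; it is cleaner to move the amalgamation from $\C$ to $C([0,1])$ as the paper does.
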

\begin{proof}
Without loss of generality, we restrict to separable C*-algebras, since
every C*-algebra is a direct limit of separable C*-algebras and $K$-theory
functor is continuous with respect to direct limits. 
Recall that, by Theorem \ref{jointhecone3}, we have the isomorphism
\[
A{\mathrlap{+}\times}B\cong \mathcal{C}A\ast_{C([0,1])}\mathcal{C}B.
\]
By \cite[Theorem 6.4]{Thom03}, there is the following six-term exact sequence:
\[
\xymatrix{
K_0(C([0,1])) \ar[r]^{((f_1)_*,(f_2)_*)}
& K_0(\mathcal{C}A)\oplus K_0(\mathcal{C}B) \ar[r]
& K_0(A{\mathrlap{+}\times}B) \ar[d]\\
K_1(A{\mathrlap{+}\times}B)\ar[u]
& K_1(\mathcal{C}A)\oplus K_1(\mathcal{C}B) \ar[l]
& K_1(C([0,1]))~~ \ar[l]}
\]
from which the statement follows, since $K_0(C([0,1]))=K_0(\mathcal{C}A)=K_0(\mathcal{C}B)=\mathbb{Z}$
(generated by $[1]$) and $K_1(C([0,1]))=K_1(\mathcal{C}A)=K_1(\mathcal{C}B)=0$ for any $A$ and $B$.
\end{proof}
Let us remark, that the above result is also true at the level of the equivariant $K$-theory.

As a corollary, we obtain the $K$-theory of free noncommutative spheres $C(S^n_+)$. 
\begin{cor}
Let $C(S^n_+)$ be the noncommutative free sphere. We have that
\[
K_0(C(S^n_+))=\mathbb{Z},\qquad K_1(C(S^n_+))=0,
\]
where $K_0(C(S^n_+))$ is generated by $[1]$.
\end{cor}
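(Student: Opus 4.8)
The plan is simply to combine the two results established immediately above, so that the corollary follows with essentially no extra work. First I would invoke Example~\ref{frees}, which supplies a unital $*$-isomorphism
\[
C(S^n_+)\cong C(E_n^{\freejoin}\mathbb{Z}/2\mathbb{Z}).
\]
For $n\geq 1$ the right-hand side is by definition an iterated free noncommutative join of $n+1$ copies of $C(\mathbb{Z}/2\mathbb{Z})$, and the associativity of the construction lets me rewrite it as $A\freejoin B$ with $A:=C(E_{n-1}^{\freejoin}\mathbb{Z}/2\mathbb{Z})$ and $B:=C(\mathbb{Z}/2\mathbb{Z})$, both unital C*-algebras. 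This is precisely the shape to which the foregoing theorem applies.

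The second and final step is then immediate. That theorem asserts that the unital embedding $\mathbb{C}\hookrightarrow A\freejoin B$ induces isomorphisms $K_0(A\freejoin B)=\mathbb{Z}$, generated by $[1]$, and $K_1(A\freejoin B)=0$. Since $K$-theory is functorial and the isomorphism of Example~\ref{frees} is unital, I can transport these groups to $C(S^n_+)$ with the class $[1]$ sent to $[1]$; hence the distinguished generator is preserved and the stated computation follows verbatim.

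The hard part is, honestly, that there is no hard part: the statement is a genuine corollary of the foregoing theorem. The only two points deserving a moment's attention are, first, the implicit restriction to $n\geq 1$ --- for $n=0$ one has $C(S^0_+)\cong C(\mathbb{Z}/2\mathbb{Z})\cong\mathbb{C}^2$, whose $K_0$ is $\mathbb{Z}^2$, so one must genuinely realize the sphere as a free noncommutative join before the theorem can be used --- and, second, the bookkeeping of the generator $[1]$, which is taken care of automatically by the unitality of the maps involved. If desired, the same argument applied to the equivariant version of the foregoing theorem yields the equivariant $K$-theory of $C(S^n_+)$ as well.
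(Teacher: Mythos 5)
Your proposal is correct and matches the paper's intended argument: the corollary is stated there without a separate proof, as an immediate consequence of the identification $C(S^n_+)\cong C(E_n^{\freejoin}\mathbb{Z}/2\mathbb{Z})$ from Example~\ref{frees} together with the preceding theorem on the $K$-theory of free noncommutative joins. Your additional observation that the statement implicitly requires $n\geq 1$ (since $C(S^0_+)\cong\mathbb{C}^2$ has $K_0\cong\mathbb{Z}^2$) is a worthwhile caveat that the paper does not make explicit.
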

As far as the authors know,
for $n>1$ (see \cite{g-n94} for $n=1$), this result was not stated in the literature before.

\section{Locally trivial compact quantum principal bundles}\label{qpb}

In this section, we introduce three different incarnations of the local-triviality dimension for 
actions of compact 
quantum groups on unital C*-algebras and introduce the concept of a locally trivial 
compact quantum principal bundle.
We also prove that local triviality in that sense implies freeness of the action and 
introduce $n$-universal locally trivial compact
quantum principal bundles. 

Throughout this section ${\rm t}$ denotes 
the inclusion function from the half-open interval $(0,1]$ into
$\mathbb{C}$, which generates $C_0((0,1])$ as a C*-algebra.

\subsection{The (weak) local-triviality dimension}

\begin{df}\label{lt}
Let $A$ be a unital C*-algebra equipped with an action $\delta$
of a compact quantum group $\mathbb{G}$.
If $d$ is the minimal nonnegative integer such that there exist
\mbox{$\mathbb{G}$-equi}variant~$*$-homomorphisms $$\rho_0,\ldots, \rho_d :C_0((0,1])\otimes C(\mathbb{G})\to A$$
satisfying the condition that:
\begin{enumerate}
\item $\sum_{j=0}^d\rho_j({\rm t}\otimes 1)$ is~invertible, we say that $\delta$ has the \emph{weak local-triviality dimension} $d$, written $\dim_{\rm{WLT}}(\delta)=d$, and we set $\dim_{\rm{WLT}}(\delta)=\infty$ if no such $d$ exists,
\item $\sum_{j=0}^d\rho_j({\rm t}\otimes 1)=1$ (joint-unitality), we say that $\delta$ has the \emph{local-triviality dimension} $d$, written $\dim_{\rm{LT}}(\delta)=d$, and we set $\dim_{\rm{LT}}(\delta)=\infty$ if no such $d$ exists.
\end{enumerate}
\end{df}

\begin{notation}
If there is no ambiguity about the $\mathbb{G}$-action on $A$,
we will also denote the (weak) local-triviality dimension by 
$\dim_{\rm (W)LT}^\mathbb{G}(A)$.\hfill$\diamond$
\end{notation}

It is immediate from Definition~\ref{lt} that for any coaction $\delta$, we have that
\begin{equation}\label{wltlesslt}
\dim_{\rm{WLT}}(\delta)\leq \dim_{\rm{LT}}(\delta)\,.
\end{equation}
Next, let $A$ and $B$ be $\mathbb{G}$-C*-algebras with actions $\delta_A$ and $\delta_B$ 
respectively. 
If there exists a $\mathbb{G}$-equivariant $*$-homomorphism $A\to B$, then
\begin{equation}\label{loceq}
\dim_{\rm WLT}(\delta_A)\geq\dim_{\rm WLT}(\delta_B),\qquad\dim_{\rm LT}(\delta_A)\geq\dim_{\rm LT}(\delta_B).
\end{equation}
The above property of the local-triviality dimensions is useful for Borsuk--Ulam-type problems (see Section~\ref{but}),
where one usually tries to establish nonexistence of certain equivariant maps. Moreover, if
$A$ is a unital $\mathbb{G}$-C*-algebra with an action $\delta_A$ and $I$ is a {\em $\mathbb{G}$-invariant} ideal, 
i.e. $\delta_A(I)\subseteq I\otimes C(\mathbb{G})$, then it follows from \eqref{loceq} that
\begin{equation}\label{quotient}
\dim_{\rm WLT}(\delta_{A/I})\leq \dim_{\rm WLT}(\delta),\qquad \dim_{\rm LT}(\delta_{A/I})\leq \dim_{\rm LT}(\delta).
\end{equation}
Here $\delta_{A/I}$ is the action induced on the quotient by $\delta_A$.

\begin{rem}
Few comments regarding the minimal reduced coactions are in order.
Recall that if $\delta_A:A\to A\otimes C(\mathbb{G})$ is not injective
we use $\overline{\delta}_A:\overline{A}\to\overline{A}\otimes C(\mathbb{G})$ instead,
where $\overline{A}=A/\ker\delta$. 
Adapting our definition of the local-triviality dimension to non-injective coactions, by \eqref{quotient}, we obtain
\[
\dim_{\rm LT}(\overline{\delta}_A)\leq \dim_{\rm LT}(\delta_A).
\]
Hence considering the minimal coaction does not change the finiteness of the local-triviality dimension.
Next, let $B$ be another $\mathbb{G}$-C*-algebra with a non-injective coaction $\delta_B$
and suppose that there is a $\mathbb{G}$-equivariant $*$-homomorphism $A\to B$.
Then, there is a $\mathbb{G}$-equivariant $*$-homomorphism $\overline{A}\to \overline{B}$,
so that an analog of \eqref{loceq} is satisfied.
\hfill$\diamond$
\end{rem}

We list some elementary facts about the local-triviality dimensions.
\begin{prop}\label{trivializable}
Let $\delta:A\to A\otimes C(\mathbb{G})$ be an action of
a compact quantum group $\mathbb{G}$ on a unital C*-algebra $A$.
Then $\dim_{\rm LT}(\delta)=0$ if and only if there exists
a~$\mathbb{G}$-equivariant *-homomorphism $C(\mathbb{G})\to A$.
\end{prop}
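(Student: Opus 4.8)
The plan is to unwind Definition~\ref{lt} in the case $d=0$ and match it against the condition for the existence of an equivariant $*$-homomorphism $C(\mathbb{G})\to A$. When $d=0$ there is a single $\mathbb{G}$-equivariant $*$-homomorphism $\rho_0\colon C_0((0,1])\otimes C(\mathbb{G})\to A$ with $\rho_0({\rm t}\otimes 1)=1$. The key observation is that $C_0((0,1])$ is the universal (non-unital) C*-algebra generated by a single positive contraction, namely the coordinate function ${\rm t}$; equivalently $C([0,1])$ is the universal unital C*-algebra on one positive contraction, and $C_0((0,1])$ is the ideal where that contraction vanishes at $0$. Thus giving $\rho_0$ amounts to choosing the image of the generator ${\rm t}\otimes 1$ together with the images of $1_{C_0((0,1])}\otimes h$ for $h\in C(\mathbb{G})$.

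For the forward direction, suppose $\dim_{\rm LT}(\delta)=0$, so we have such a $\rho_0$ with $\rho_0({\rm t}\otimes 1)=1$. First I would argue that $\rho_0(1\otimes{-})\colon C(\mathbb{G})\to A$ is a well-defined unital $*$-homomorphism: the joint-unitality condition $\rho_0({\rm t}\otimes 1)=1$ forces $\rho_0$ to be unital in the sense that $\rho_0(1\otimes 1)=1$, since ${\rm t}$ acts as a unit on $C_0((0,1])$ multiplicatively only up to the identification $C_0((0,1])\cdot C(\mathbb{G})$, and restricting $\rho_0$ along the inclusion $h\mapsto {\rm t}\otimes h$ gives a $*$-homomorphism $C(\mathbb{G})\to A$ sending $1\mapsto\rho_0({\rm t}\otimes 1)=1$. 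Its equivariance is inherited from the equivariance of $\rho_0$ together with the fact that the relevant $\mathbb{G}$-action on $C_0((0,1])\otimes C(\mathbb{G})$ is trivial on the first factor and the translation action on the second. This yields the desired $\mathbb{G}$-equivariant unital $*$-homomorphism $C(\mathbb{G})\to A$.

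For the converse, suppose $\Psi\colon C(\mathbb{G})\to A$ is a $\mathbb{G}$-equivariant unital $*$-homomorphism. I would define $\rho_0\colon C_0((0,1])\otimes C(\mathbb{G})\to A$ by specifying it on elementary tensors via $\rho_0(g\otimes h):=g(1)\,\Psi(h)$ for $g\in C_0((0,1])$ and $h\in C(\mathbb{G})$, i.e. evaluation of the cone coordinate at the endpoint $1$ followed by $\Psi$. Concretely this is the composite $C_0((0,1])\otimes C(\mathbb{G})\xrightarrow{{\rm ev}_1\otimes\,{\rm id}}\mathbb{C}\otimes C(\mathbb{G})=C(\mathbb{G})\xrightarrow{\Psi}A$. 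Then $\rho_0({\rm t}\otimes 1)={\rm t}(1)\,\Psi(1)=1\cdot 1=1$, so the joint-unitality condition holds, and equivariance of $\rho_0$ follows from that of $\Psi$ since the evaluation factor is $\mathbb{G}$-trivial. This exhibits $d=0$ as achievable, and since $0$ is the minimal possible value, $\dim_{\rm LT}(\delta)=0$.

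The main obstacle, and the only genuinely delicate point, is the verification that the joint-unitality condition $\rho_0({\rm t}\otimes 1)=1$ really does force $\rho_0$ to factor through evaluation at the endpoint $1$, so that the two directions are exact inverses of one another. The subtlety is that ${\rm t}$ is not the unit of $C_0((0,1])$ (this algebra is non-unital), yet $\rho_0({\rm t}\otimes 1)=1$ in $A$; I would resolve this by using functional calculus, noting that for any positive $f\in C_0((0,1])$ with $f\leq {\rm t}$ pointwise one has $0\leq\rho_0(f\otimes 1)\leq 1$, and that ${\rm t}^{1/n}\to \chi_{(0,1]}$ strong-pointwise, whence $\rho_0({\rm t}^{1/n}\otimes h)\to\Psi(h)$ and forces any $g$ vanishing faster than a power of ${\rm t}$ near $0$ to be sent to $g(1)\Psi(h)$; density then pins down $\rho_0$ uniquely. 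I expect this functional-calculus bookkeeping to be routine but worth stating carefully, as it is what makes the correspondence a genuine bijection rather than merely an implication in each direction.
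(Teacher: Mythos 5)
Your converse direction is correct and is essentially the paper's (unwritten) ``analogous'' half: compose ${\rm ev}_1\otimes{\rm id}$ with the given equivariant unital $*$-homomorphism and note $\rho_0({\rm t}\otimes 1)={\rm t}(1)\cdot 1=1$. The forward direction, however, has a genuine gap at its only nontrivial point. The map $h\mapsto{\rm t}\otimes h$ is \emph{not} multiplicative, since $({\rm t}\otimes a)({\rm t}\otimes b)={\rm t}^2\otimes ab\neq{\rm t}\otimes ab$, so ``restricting $\rho_0$ along the inclusion $h\mapsto{\rm t}\otimes h$'' does not by itself produce a $*$-homomorphism: in general $h\mapsto\rho_0({\rm t}\otimes h)$ is only a completely positive contractive order zero map (this is exactly the Winter--Zacharias correspondence the paper later records as Theorem~5.8, and order zero maps are typically not multiplicative). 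Your supporting assertion that ``${\rm t}$ acts as a unit on $C_0((0,1])$ multiplicatively'' is false (${\rm t}\cdot{\rm t}={\rm t}^2\neq{\rm t}$), and that false assertion is what is carrying the multiplicativity in your write-up. The hypothesis $\rho_0({\rm t}\otimes 1)=1$ has to enter an actual computation; the paper's proof does this in one line by writing ${\rm t}\otimes ab=(\sqrt{{\rm t}}\otimes a)(\sqrt{{\rm t}}\otimes b)$ and inserting $1=\rho_0({\rm t}\otimes 1)=\rho_0(\sqrt{{\rm t}}\otimes 1)^2$ in the middle:
\[
\rho_0({\rm t}\otimes ab)=\rho_0(\sqrt{{\rm t}}\otimes a)\,\rho_0({\rm t}\otimes 1)\,\rho_0(\sqrt{{\rm t}}\otimes b)=\rho_0({\rm t}\otimes a)\,\rho_0({\rm t}\otimes b),
\]
after which $\varphi(h):=\rho_0({\rm t}\otimes h)$ is the desired equivariant unital $*$-homomorphism.

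Your final paragraph also aims at the wrong target. The proposition asserts only an equivalence of existence statements, so you do not need the two constructions to be mutually inverse, and in particular you do not need to show that every jointly unital $\rho_0$ factors through evaluation at $1$. Moreover, the argument sketched there is circular ($\Psi$ is invoked before it has been constructed) and the limit $\rho_0({\rm t}^{1/n}\otimes h)$ is not controlled by norm continuity, because ${\rm t}^{1/n}$ does not converge in $C_0((0,1])$. (If you do want the factorization, the clean route is functional calculus on the first leg: $f\mapsto\rho_0(f\otimes 1)$ is a $*$-homomorphism $C_0((0,1])\to A$ sending ${\rm t}$ to $1$, hence $\rho_0(f\otimes 1)=f(1)1$ for all $f$, and then $\rho_0(f\otimes h)=\rho_0(\sqrt{f}\otimes 1)\rho_0(\sqrt{f}\otimes h)$ lets you peel off scalars --- but none of this is needed for the statement.)
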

\begin{proof}
Let $\dim_{\rm LT}(\delta)=0$, so we have a
$*$-$\mathbb{G}$-homomorphism $\rho:C_0((0,1])\otimes C(\mathbb{G})\to A$
such that $\rho({\rm t}\otimes 1)=1$. Consider the map
\[
\varphi:C(\mathbb{G})\longrightarrow A: h\longmapsto \rho({\rm t}\otimes h).
\]
The above assignment is clearly unital, $\mathbb{G}$-equivariant and respects the $*$-structure. 
Moreover, since
\begin{align*}
\varphi(ab)&=\rho({\rm t}\otimes ab)
=\rho(\sqrt{{\rm t}}\otimes a)\rho({\rm t}\otimes 1)\rho(\sqrt{{\rm t}}\otimes b)
=\rho({\rm t}\otimes a)\rho({\rm t}\otimes b)=\varphi(a)\varphi(b),
\end{align*}
it is also an algebra homomorphism. The other implication is analogous.
\end{proof}

\begin{prop}\label{dimzero}
Let $\delta:A\to A\otimes C(\mathbb{G})$ be an action of
a compact quantum group $\mathbb{G}$ on a unital C*-algebra $A$.
Then we have that
\[
\dim_{\rm{WLT}}(\delta)=0 \iff \dim_{\rm{LT}}(\delta)=0.
\]
\end{prop}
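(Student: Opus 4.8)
The plan is to deduce both implications from Proposition~\ref{trivializable}, which identifies vanishing of $\dim_{\rm LT}$ with the existence of a unital $\mathbb{G}$-equivariant $*$-homomorphism $C(\mathbb{G})\to A$. The implication ``$\dim_{\rm LT}(\delta)=0\Rightarrow\dim_{\rm WLT}(\delta)=0$'' is immediate: since both quantities are nonnegative integers, \eqref{wltlesslt} forces $\dim_{\rm WLT}(\delta)=0$. All the content is in the converse, where I would manufacture a genuine equivariant $*$-homomorphism $C(\mathbb{G})\to A$ out of the weaker data provided by weak local triviality.

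So suppose $\dim_{\rm WLT}(\delta)=0$ and fix a single $\mathbb{G}$-equivariant $*$-homomorphism $\rho_0\colon C_0((0,1])\otimes C(\mathbb{G})\to A$ with $b:=\rho_0({\rm t}\otimes 1)$ invertible. First I would record that $b$ is positive (being the image of the positive element ${\rm t}\otimes 1$) and invertible, so $b^{-1}$ exists in $A$ and lies in the C*-algebra generated by $b$. The crucial structural observation is that ${\rm t}\otimes 1$ is central in $C_0((0,1])\otimes C(\mathbb{G})$, whence $b$---and therefore $b^{-1}$---commutes with the entire image of $\rho_0$. I would then define $\varphi\colon C(\mathbb{G})\to A$ by $\varphi(h):=b^{-1}\rho_0({\rm t}\otimes h)$ and check directly that it is the desired map: unitality is $\varphi(1)=b^{-1}b=1$; the $*$-property follows from self-adjointness of ${\rm t}$ and of $b^{-1}$ together with commutativity; and multiplicativity uses the identity $\rho_0({\rm t}\otimes h_1)\rho_0({\rm t}\otimes h_2)=\rho_0({\rm t}^2\otimes h_1h_2)=b\,\rho_0({\rm t}\otimes h_1h_2)$, which upon inserting the two factors $b^{-1}$ and cancelling one power of $b$ gives exactly $\varphi(h_1h_2)$.

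The step I expect to require the most care is the equivariance of $\varphi$, because it forces me to pin down how $b^{-1}$ interacts with the coaction. I would first compute, from equivariance of $\rho_0$ with respect to the action ${\rm id}\otimes\Delta$ on the source, that $\delta(\rho_0({\rm t}\otimes h))=(\psi\otimes{\rm id})\Delta(h)$ where $\psi(h):=\rho_0({\rm t}\otimes h)$; in particular $\delta(b)=b\otimes 1$, so $b\in A^{\mathbb{G}}$ and hence $\delta(b^{-1})=b^{-1}\otimes 1$. Feeding this into $\delta(\varphi(h))=\delta(b^{-1})\delta(\psi(h))$ and using a Sweedler-type bookkeeping then yields $\delta(\varphi(h))=(\varphi\otimes{\rm id})\Delta(h)$, which is exactly equivariance. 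Having produced a unital $\mathbb{G}$-equivariant $*$-homomorphism $C(\mathbb{G})\to A$, I would invoke Proposition~\ref{trivializable} to conclude $\dim_{\rm LT}(\delta)=0$, completing the equivalence.
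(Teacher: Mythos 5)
Your proof is correct, and it takes a mildly but genuinely different route from the paper's. The paper works entirely at the level of the cone: it renormalizes $\rho$ into a new map $\widetilde{\rho}$ defined on the dense span of elementary tensors by $\widetilde{\rho}({\rm t}^k\otimes h)=\rho(\sqrt{{\rm t}}\otimes 1)^{-k}\rho({\rm t}\otimes h)\rho(\sqrt{{\rm t}}\otimes 1)^{-k}$, checks multiplicativity, extends to a $\mathbb{G}$-equivariant $*$-homomorphism $C_0((0,1])\otimes C(\mathbb{G})\to A$ with $\widetilde{\rho}({\rm t}\otimes 1)=1$, and concludes directly from Definition~\ref{lt}. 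You instead manufacture the trivialization $\varphi(h)=b^{-1}\rho_0({\rm t}\otimes h)$ on all of $C(\mathbb{G})$ at once and conclude via Proposition~\ref{trivializable}. In fact the two constructions produce the same homomorphism: since ${\rm t}\otimes 1$ is central in the cone, the paper's symmetric conjugation collapses to your one-sided division by $b$. What your route buys is economy and rigor at one point the paper glosses over: you never need the ``extends by density'' step (your map is defined globally and its multiplicativity, $*$-compatibility, and equivariance are verified outright), and you make explicit the two facts the paper uses tacitly, namely that $b$ commutes with the image of $\rho_0$ and that $\delta(b)=b\otimes 1$, hence $\delta(b^{-1})=b^{-1}\otimes 1$. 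What the paper's route buys is reusability: the renormalization-of-the-cone-map technique is exactly what is invoked again in the subsequent proposition (where $\dim_{\rm WLT}(\delta)=d$ together with commuting images yields $\dim_{\rm LT}(\delta)=d$), a situation where your shortcut through Proposition~\ref{trivializable} is unavailable because that proposition is specific to dimension zero.
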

\begin{proof}
For the not immediate implication,
let $\dim_{\rm{WLT}}(\delta)=0$, so that 
we have \mbox{a~$\mathbb{G}$-equi}variant *-homomorphism  
$\rho:C_0((0,1])\otimes C(\mathbb{G})\to A$, such that $\rho({\rm t}\otimes 1)$ is invertible.
Every element of $C_0((0,1])\otimes C(\mathbb{G})$ can be approximated by
linear combinations of elements of the form ${\rm t}^k\otimes h$, where 
$k\in\mathbb{N}\setminus\{0\}$ and $h\in C(\mathbb{G})$.
Now consider a map
$\widetilde{\rho}:C_0((0,1])\otimes C(\mathbb{G})\to A$ defined by
\[
\widetilde{\rho}({\rm t}^k\otimes h)=
\rho(\sqrt{{\rm t}}\otimes 1)^{-k}\rho({\rm t}\otimes h)\rho(\sqrt{{\rm t}}\otimes 1)^{-k}
\] 
for any $k\in\mathbb{N}\setminus\{0\}$ and $h\in C(\mathbb{G})$.
Since $\rho({\rm t}\otimes 1)$ is invertible and $\mathbb{G}$-invariant,
this map is well defined and $\mathbb{G}$-equivariant. For any $g, h\in C(\mathbb{G})$, we have that
\begin{align*}
\widetilde{\rho}({\rm t}\otimes g)\widetilde{\rho}({\rm t}\otimes h)&=
\rho(\sqrt{{\rm t}}\otimes 1)^{-1}\rho({\rm t}\otimes g)\rho(\sqrt{{\rm t}}\otimes 1)^{-1}
\rho(\sqrt{{\rm t}}\otimes 1)^{-1}\rho({\rm t}\otimes h)\rho(\sqrt{{\rm t}}\otimes 1)^{-1}\\
&=\rho(\sqrt{{\rm t}}\otimes 1)^{-1}\rho(\sqrt{{\rm t}}\otimes g)
\rho(\sqrt{{\rm t}}\otimes h)\rho(\sqrt{{\rm t}}\otimes 1)^{-1}\\
&=\rho(\sqrt{{\rm t}}\otimes h)^{-2}\rho({\rm t}^2\otimes gh)\rho(\sqrt{{\rm t}}\otimes 1)^{-2}\\
&=\widetilde{\rho}({\rm t}^2\otimes gh).
\end{align*}
Hence, $\widetilde{\rho}$ can be extended to a $\mathbb{G}$-equivariant $*$-homomorphism
\[
\widetilde{\rho}:C_0((0,1])\otimes C(\mathbb{G})\to A.
\]
One can verify that $\widetilde{\rho}({\rm t}\otimes 1)=1$
and therefore $\dim_{\rm LT}(\delta)=0$.
\end{proof}
\begin{cor}
Let $\delta:A\to A\otimes C(\mathbb{G})$ be an action of
a compact quantum group $\mathbb{G}$ on a unital C*-algebra $A$.
Then $\dim_{\rm LT}(\delta)=1$ implies that $\dim_{\rm WLT}(\delta)=1$.
\end{cor}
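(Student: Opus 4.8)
The plan is to combine the two facts already established just above this corollary, namely the general inequality \eqref{wltlesslt} and the equivalence in Proposition~\ref{dimzero}. Since $\dim_{\rm WLT}(\delta)$ and $\dim_{\rm LT}(\delta)$ are nonnegative integers (or $\infty$), pinning down a specific finite value only requires squeezing from above and ruling out the smaller candidate from below.

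First I would invoke \eqref{wltlesslt}, which gives $\dim_{\rm WLT}(\delta)\leq\dim_{\rm LT}(\delta)=1$ under the hypothesis. Hence $\dim_{\rm WLT}(\delta)$ is either $0$ or $1$, and it remains only to exclude the value $0$.

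To exclude $0$, I would argue by contradiction using Proposition~\ref{dimzero}. If $\dim_{\rm WLT}(\delta)=0$, then the equivalence $\dim_{\rm WLT}(\delta)=0\iff\dim_{\rm LT}(\delta)=0$ forces $\dim_{\rm LT}(\delta)=0$, contradicting the assumption $\dim_{\rm LT}(\delta)=1$. Therefore $\dim_{\rm WLT}(\delta)\neq 0$, and combined with the bound $\dim_{\rm WLT}(\delta)\leq 1$ we conclude $\dim_{\rm WLT}(\delta)=1$.

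There is essentially no genuine obstacle here: the corollary is a formal consequence of the inequality and the dimension-zero equivalence, and the only conceptual point is recognizing that Proposition~\ref{dimzero} supplies precisely the reverse implication needed to prevent the weak dimension from dropping to $0$. The entire argument is a short combination of these two prior results, so the proof will be only a couple of lines.
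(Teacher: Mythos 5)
Your argument is correct and is exactly the paper's proof: the inequality \eqref{wltlesslt} gives $\dim_{\rm WLT}(\delta)\leq 1$, and Proposition~\ref{dimzero} rules out the value $0$. Nothing is missing.
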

\begin{proof}
Let $\dim_{\rm LT}(\delta)=1$. This implies that $\dim_{\rm WLT}(\delta)\leq 1$
by \eqref{wltlesslt}.
Since $\dim_{\rm LT}(\delta)\neq 0$ implies that $\dim_{\rm WLT}(\delta)\neq 0$ 
by Proposition~\ref{dimzero}, the claim follows.
\end{proof}

\begin{prop}
Let $\delta:A\to A\otimes C(\mathbb{G})$ be an action of
a compact quantum group $\mathbb{G}$ on a unital C*-algebra $A$.
Suppose that $\dim_{\rm WLT}(\delta)=d$ and that $\sum_{j=0}^d\rho_j({\rm t}\otimes 1)$ commutes
with the images of the maps $\rho_j$ for all $j=0,1,\ldots,d$. Then,
$\dim_{\rm{LT}}(\delta)=d$.
\end{prop}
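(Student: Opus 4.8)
The plan is to leverage the bound \eqref{wltlesslt}, which gives $d = \dim_{\rm WLT}(\delta)\le\dim_{\rm LT}(\delta)$, so that it suffices to produce $d+1$ equivariant $*$-homomorphisms witnessing $\dim_{\rm LT}(\delta)\le d$. Write $S:=\sum_{j=0}^d\rho_j({\rm t}\otimes 1)$. Since ${\rm t}\otimes 1$ is a positive element of $C_0((0,1])\otimes C(\mathbb{G})$ and each $\rho_j$ is a $*$-homomorphism, $S$ is positive, and it is invertible by the definition of $\dim_{\rm WLT}(\delta)=d$; hence $C:=S^{-1/2}$ is a positive invertible element obtained from $S$ by continuous functional calculus. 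Because ${\rm t}\otimes 1$ is fixed by the coaction on the domain and each $\rho_j$ is equivariant, each $\rho_j({\rm t}\otimes 1)$, and therefore $S$, lies in the fixed-point subalgebra $A^{\mathbb{G}}$; as $A^{\mathbb{G}}$ is an inverse-closed unital C*-subalgebra, $C\in A^{\mathbb{G}}$ as well. Finally, the standing hypothesis says $S$ commutes with the image of each $\rho_j$, and since $C$ is a norm-limit of polynomials in $S$, the same holds for $C$.

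I would then imitate the construction in the proof of Proposition~\ref{dimzero} and define, on the dense $*$-subalgebra spanned by the elements ${\rm t}^k\otimes h$ with $k\ge 1$ and $h\in C(\mathbb{G})$, the maps
\[
\widetilde{\rho}_j({\rm t}^k\otimes h):=C^k\,\rho_j({\rm t}^k\otimes h)\,C^k,\qquad j=0,1,\ldots,d.
\]
Joint-unitality is then immediate, since $\sum_{j=0}^d\widetilde{\rho}_j({\rm t}\otimes 1)=C\Big(\sum_{j=0}^d\rho_j({\rm t}\otimes 1)\Big)C=CSC=1$. Self-adjointness of $C$ makes each $\widetilde{\rho}_j$ $*$-preserving, and the $\mathbb{G}$-invariance of $C$ together with the equivariance of $\rho_j$ shows that each $\widetilde{\rho}_j$ intertwines the coactions; these verifications are routine.

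The heart of the argument, and the step I expect to be the main obstacle, is checking multiplicativity, i.e. that $\widetilde{\rho}_j({\rm t}^k\otimes g)\,\widetilde{\rho}_j({\rm t}^l\otimes h)=\widetilde{\rho}_j({\rm t}^{k+l}\otimes gh)$. Starting from $C^k\rho_j({\rm t}^k\otimes g)C^{k+l}\rho_j({\rm t}^l\otimes h)C^l$, I would move the central block $C^{k+l}$ across $\rho_j({\rm t}^l\otimes h)$ and the remaining powers of $C$ across $\rho_j({\rm t}^{k+l}\otimes gh)$, applying multiplicativity of $\rho_j$ in between, to collapse the expression to $\rho_j({\rm t}^{k+l}\otimes gh)\,C^{2(k+l)}$, which equals $\widetilde{\rho}_j({\rm t}^{k+l}\otimes gh)$. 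Each commutation of a power of $C$ past an element of the image of $\rho_j$ is precisely where the hypothesis that $S$ commute with the images of the $\rho_j$ is indispensable; by contrast, in Proposition~\ref{dimzero} there was a single map and the relevant element $\rho({\rm t}\otimes 1)$ was automatically central, so no extra assumption was needed. Once multiplicativity holds on the dense $*$-subalgebra, each $\widetilde{\rho}_j$ is a $*$-homomorphism there and hence contractive, so it extends to all of $C_0((0,1])\otimes C(\mathbb{G})$; the maps $\widetilde{\rho}_0,\ldots,\widetilde{\rho}_d$ then witness $\dim_{\rm LT}(\delta)\le d$, and combined with $d\le\dim_{\rm LT}(\delta)$ we conclude $\dim_{\rm LT}(\delta)=d$.
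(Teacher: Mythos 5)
Your argument is correct and essentially identical to the paper's: the paper sets $e:=\sum_j\rho_j(\mathrm{t}\otimes 1)$ and defines $\widetilde{\rho}_j(\mathrm{t}^k\otimes h):=\rho_j(\mathrm{t}^k\otimes h)e^{-k}$, which under the commutation hypothesis coincides with your symmetric $C^k\rho_j(\mathrm{t}^k\otimes h)C^k$ for $C=e^{-1/2}$, and both proofs use invertibility, $\mathbb{G}$-invariance, and the commutation hypothesis to extend these to equivariant $*$-homomorphisms satisfying joint unitality. Your explicit appeal to $\dim_{\rm WLT}\le\dim_{\rm LT}$ for the reverse inequality is a welcome (if implicit in the paper) finishing touch.
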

\begin{proof}
Suppose that $\dim_{\rm WLT}(\delta)=d$ and let
$e:=\sum_{j=0}^d\rho_j({\rm t}\otimes 1)$.
Similarly as in the proof of Proposition \ref{dimzero}, we define the maps
\[
\widetilde{\rho}_i({\rm t}^k\otimes h):=\rho_i({\rm t}^k\otimes h)e^{-k},\qquad i=0,1,\ldots, d,
\]
and, since $e$ is invertible, $\mathbb{G}$-invariant, and it
commutes with the images of the maps $\rho_j$,  
we extend each $\widetilde{\rho}_i$ to a $\mathbb{G}$-equivariant 
$*$-homomorphism
\[
\widetilde{\rho}_i:C_0((0,1])\otimes C(\mathbb{G})\to A, \qquad i=0,1,\ldots, d.
\] 
It only remains to check that the joint-unitality condition is satisfied:
\[
\sum_{i=0}^d\widetilde{\rho}_i({\rm t}\otimes 1)=\sum_{i=0}^d\rho_i({\rm t}\otimes 1)e^{-1}=ee^{-1}=1.
\]
\end{proof}

Next, we show that finiteness of the weak local-triviality dimension implies freeness in the sense of Definition \ref{freeaction}, 
in complete generality.

\begin{thm}\label{thm:LocTrivFree}
Let $\mathbb{G}$ be a compact quantum group, let $A$ be a unital C*-algebra, and let $\delta$ be an action
of $\mathbb{G}$ on $A$. If $\dim_{\rm WLT}(\delta)<\infty$, then $\delta$ is free.
\end{thm}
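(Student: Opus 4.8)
The plan is to verify the Ellwood freeness condition of Definition~\ref{freeaction}, i.e.\ to show that the closed linear span
\[
M:=\{(a\otimes 1)\delta(b)\mid a,b\in A\}^{\rm cls}
\]
equals $A\otimes C(\mathbb{G})$. Two reductions cut this down to one computation. First, $M$ is a left $(A\otimes 1)$-module, since $(a'\otimes 1)(a\otimes 1)\delta(b)=(a'a\otimes 1)\delta(b)$; as the algebraic tensor product of $A$ with $C(\mathbb{G})$ is dense in $A\otimes C(\mathbb{G})$, it therefore suffices to prove that $1\otimes C(\mathbb{G})\subseteq M$. Second, by the Peter--Weyl theory for compact quantum groups, the matrix coefficients $u^\pi_{ij}$ of the irreducible unitary corepresentations $\pi$ of $\mathbb{G}$ span a dense $*$-subalgebra of $C(\mathbb{G})$; since $M$ is norm-closed, it is enough to show that $1\otimes u^\pi_{ij}\in M$ for every such matrix coefficient.

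Fix witnesses $\rho_0,\dots,\rho_d\colon C_0((0,1])\otimes C(\mathbb{G})\to A$ as in Definition~\ref{lt}, so that $e:=\sum_{\ell=0}^d\rho_\ell({\rm t}\otimes 1)$ is invertible. Two structural facts drive the argument. The element $e$ is $\mathbb{G}$-invariant: $\mathbb{G}$-equivariance of the $\rho_\ell$ together with $\Delta(1)=1\otimes 1$ gives $\delta(e)=e\otimes 1$, and hence also $\delta(e^{-1})=e^{-1}\otimes 1$. Moreover, $\mathbb{G}$-equivariance of each $\rho_\ell$, combined with $\Delta(u^\pi_{ij})=\sum_k u^\pi_{ik}\otimes u^\pi_{kj}$, yields the intertwining relation
\[
\delta\bigl(\rho_\ell(f\otimes u^\pi_{ij})\bigr)=\sum_k \rho_\ell(f\otimes u^\pi_{ik})\otimes u^\pi_{kj}
\]
for every $f\in C_0((0,1])$.

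The heart of the argument is a single computation that converts the comultiplication into a Kronecker delta by way of the unitarity relations $\sum_i (u^\pi_{ij})^*u^\pi_{ik}=\delta_{jk}1$. Writing $a_{\ell,i,j}:=\rho_\ell(\sqrt{\rm t}\otimes u^\pi_{ij})$ and fixing indices $j,j'$, I would form the ``quadratic'' expression
\[
S:=\sum_{\ell=0}^d\sum_i \bigl(a_{\ell,i,j}^*\otimes 1\bigr)\,\delta\bigl(a_{\ell,i,j'}\bigr),
\]
which manifestly lies in $M$. Expanding $\delta(a_{\ell,i,j'})$ by the intertwining relation, using that each $\rho_\ell$ is a $*$-homomorphism, and splitting ${\rm t}=\sqrt{\rm t}\cdot\sqrt{\rm t}$ so that $a_{\ell,i,j}^*a_{\ell,i,k}=\rho_\ell\bigl({\rm t}\otimes (u^\pi_{ij})^*u^\pi_{ik}\bigr)$, the inner sum over $i$ collapses by unitarity and leaves $S=e\otimes u^\pi_{jj'}$. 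Since $M$ is a left $(A\otimes 1)$-module and $e^{-1}\in A$, multiplying on the left by $e^{-1}\otimes 1$ gives $1\otimes u^\pi_{jj'}=(e^{-1}\otimes 1)S\in M$.

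Running this over all $\pi,j,j'$ and invoking the two reductions yields $M=A\otimes C(\mathbb{G})$, i.e.\ freeness. The hypothesis $\dim_{\rm WLT}(\delta)<\infty$ enters precisely in furnishing a finite family $\rho_0,\dots,\rho_d$ with $e$ invertible, so that $e^{-1}$ exists and all the finite sums above are legitimate. I expect the main obstacle to be discovering the correct combination $S$: one must arrange the corepresentation indices so that the comultiplication appearing in $\delta(a_{\ell,i,j'})$ is contracted by the adjoints $a_{\ell,i,j}^*$ exactly along the unitarity relation, thereby telescoping a two-index expression down to the invariant $e$ tensored with the \emph{single} matrix coefficient $u^\pi_{jj'}$; the non-unitality of $C_0((0,1])$ is only a minor wrinkle, resolved by the factorization ${\rm t}=\sqrt{\rm t}\cdot\sqrt{\rm t}$.
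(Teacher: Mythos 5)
Your proof is correct, and its skeleton is the same as the paper's: reduce to showing $1\otimes x\in M$ via the left $(A\otimes 1)$-module structure, use equivariance of the $\rho_\ell$ to transport the freeness of the translation coaction $\Delta$ into $M$ at the cost of the invariant factor $e$, and then cancel the invertible $e$. The one substantive difference is the input you use for the freeness of $\Delta$: the paper invokes the two-sided cancellation axiom directly, approximating $1\otimes x$ by $\sum_k(y_k\otimes 1)\Delta(z_k)$ up to $\varepsilon$ and then pushing this approximation through $\rho_j\otimes\mathrm{id}$, whereas you specialize to matrix coefficients via Peter--Weyl and exploit the unitarity relation $\sum_i (u^\pi_{ij})^*u^\pi_{ik}=\delta_{jk}1$ to get the exact identity $S=e\otimes u^\pi_{jj'}$ (in effect choosing $y_k=(u^\pi_{ij})^*$, $z_k=u^\pi_{ij'}$ explicitly). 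Your route trades an $\varepsilon$-argument for an exact algebraic collapse, at the price of invoking the deeper Peter--Weyl theory (existence of the dense Hopf $*$-subalgebra of matrix coefficients) rather than only the cancellation property built into the definition of a compact quantum group; both are complete proofs.
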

\begin{proof}
Set $B=\{(A\otimes 1_{C(\mathbb{G})})\delta(A)\}^{\rm cls}$. To show that $B=A\otimes C(\mathbb{G})$, it is
enough to show that $B$ contains all simple tensors. In addition, since $B$ is a left $(A\otimes 1_{C(\mathbb{G})})$-module,
it suffices to show that $1_A\otimes x$ belongs to $B$ for all $x\in C(\mathbb{G})$.

Let $x\in C(\mathbb{G})$ be fixed, and set $d=\dim_{\rm WLT}(\delta)<\infty$. Let $\varepsilon>0$. We write $f\approx_\varepsilon g$ to mean that $\|f-g\|<\varepsilon$.
Since the action $\Delta\colon C(\mathbb{G})\to C(\mathbb{G})\otimes C(\mathbb{G})$ is free,
there are $m\in\mathbb{N}$ and $y_1,\ldots, y_m,z_1,\ldots,z_m\in C(\mathbb{G})$ such that
\[1_{C(\mathbb{G})}\otimes x\approx_{\varepsilon}\sum_{k=1}^m ( y_k\otimes 1_{C(\mathbb{G})})\Delta(z_k).\]
Using the fact that $\dim_{\rm WLT}(\delta)=d$, we find
$\mathbb{G}$-equivariant $*$-homomorphisms $$\rho_0,\ldots, \rho_d \colon C_0((0,1])\otimes C(\mathbb{G})\to A$$
such that the element $e:=\sum_{j=0}^d\rho_j({\rm t}\otimes 1_{C(\mathbb{G})})$ is invertible.
For $j=0,\ldots,d$, set $\widetilde{\rho}_j=\rho_j \otimes {\rm id}_{C(\mathbb{G})}\colon C_0((0,1])\otimes C(\mathbb{G})\otimes C(\mathbb{G})
\to A\otimes C(\mathbb{G})$, which are also $\mathbb{G}$-equivariant $*$-homomorphisms. Then,
\[
\begin{split}
e\otimes x &=\sum\limits_{j=0}^d\rho_j({\rm t}\otimes 1_{C(\mathbb{G})})\otimes x=\sum\limits_{j=0}^d\widetilde{\rho}_j({\rm t}\otimes 1_{C(\mathbb{G})}\otimes x)\\ &\approx_{\varepsilon}\sum\limits_{j=0}^d\sum\limits_{k=1}^m\widetilde{\rho}_j( {\rm t}\otimes (y_k\otimes 1_{C(\mathbb{G})})\Delta(z_k))\\
&= \sum\limits_{j=0}^d\sum\limits_{k=1}^m\widetilde{\rho}_j( \sqrt{{\rm t}}\otimes (y_k\otimes 1_{C(\mathbb{G})}))\,
\widetilde{\rho}_j( \sqrt{{\rm t}}\otimes\Delta(z_k))\\
&=\sum\limits_{j=0}^d\sum\limits_{k=1}^m(\rho_j(\sqrt{{\rm t}}\otimes y_k)\otimes 1_{C(\mathbb{G})}) \delta(\rho_j(\sqrt{{\rm t}}\otimes z_k)),
\end{split}
\]
where we used the equivariance of the maps $\rho_j$.

This shows that $e\otimes x$ belongs to $B$. Since $e$ is invertible, $1_A\otimes x$ belongs to $B$ as well. 
Hence $B=A\otimes C(\mathbb{G})$ and we conclude that $\delta$ is free.
\end{proof}

Next, we introduce the notion of a locally trivial compact quantum principal bundle.
Let $\delta:A\to A\otimes C(\mathbb{G})$ be a free
action of a compact quantum group $\mathbb{G}$
on a~unital C*-algebra $A$.
Then the triple $(A,A^\mathbb{G},\mathbb{G})$ is called a
{\em compact quantum principal bundle} 
(cf. \cite[Definition~3.1]{BDH15}). 
Hence, by Theorem~\ref{thm:LocTrivFree}, 
any action of a compact quantum group $\mathbb{G}$ with finite local-triviality dimension
gives rise to a~compact quantum principal bundle.
We arrive at the following definition, which is a~noncommutative analog
of a~locally trivial compact principal bundle (see Section~\ref{classical}).

\begin{df}
A compact quantum principal bundle $(A,A^\mathbb{G},\mathbb{G})$
is said to be {\em locally trivial} if and only if $\dim_{\rm LT}^{\mathbb{G}}(A)<\infty$.
\end{df}

A compact quantum principal 
bundle $(A, A^\mathbb{G}, \mathbb{G})$ is called 
{\em trivializable}
if there exists a $\mathbb{G}$-equivariant $*$-homomorphism $C(\mathbb{G})\to A$
\cite[Definition~3.1]{BDH15}. Hence, by Proposition~\ref{trivializable},
$(A, A^\mathbb{G}, \mathbb{G})$ is trivializable if and only if $\dim_{\rm LT}^\mathbb{G}(A)=0$.

\subsection{Examples of locally trivial compact quantum principal bundles}

We start with antipodal 
$\mathbb{Z}/2\mathbb{Z}$-actions on two different kinds of noncommutative spheres
and show that they have finite local-triviality dimension.
\begin{ex}[Antipodal action on the free spheres]\label{antifreesphere}
There is a natural antipodal action of $\mathbb{Z}/2\mathbb{Z}$ on $C(S^n_+)$ (see Example \ref{frees}) given on generators by
\[
x_i\mapsto -x_i,\qquad i=0,1,2,\ldots,n.
\]
Recall that $C(\mathbb{Z}/2\mathbb{Z})$ can be viewed as the universal C*-algebra generated by a single self-adjoint element
$\gamma$ such that $\gamma^2=1$. One can find equivariant $*$-homomorphisms
\[
\varphi_i:C_0((0,1])\otimes C(\mathbb{Z}/2\mathbb{Z})\to C(S^n_+):\sqrt{{\rm t}}\otimes\gamma\mapsto x_i,\qquad i=0,1,2,\ldots,n.
\]
Observe that
\[
\sum_{i=0}^n\varphi_i({\rm t}\otimes 1)=\sum_{i=0}^n\left(\varphi_i(\sqrt{{\rm t}}\otimes\gamma)\right)^2=\sum_{i=0}^nx^2_i=1.
\]
The above considerations imply that $\dim_{\rm LT}^{\mathbb{Z}/2\mathbb{Z}}(C(S^n_+))\leq n$.\hfill$\diamond$
\end{ex}
\begin{ex}[Antipodal action on the equatorial Podle\'s sphere]
Recall that the equatorial 
Podle\'s sphere $C(S^2_{q\infty})$ \cite{Pod87} is the unital universal 
C*-algebra generated by $B$ and self-adjoint $A$ satisfying the relations
\[
AB=q^2BA,\qquad B^*B=1-A^2,\qquad BB^*=1-q^4A^2.
\]
The antipodal $\mathbb{Z}/2\mathbb{Z}$-action is defined by
\[
A\mapsto -A,\qquad B\mapsto -B.
\]
Define $\mathbb{Z}/2\mathbb{Z}$-equivariant $*$-homomorphisms
\begin{gather*}
\rho_0:C_0((0,1])\otimes C(\mathbb{Z}/2\mathbb{Z})\to C(S^2_q):\sqrt{{\rm t}}\otimes\gamma\mapsto \frac{i}{2}(B^*-B),\\
\rho_1:C_0((0,1])\otimes C(\mathbb{Z}/2\mathbb{Z})\to C(S^2_q):\sqrt{{\rm t}}\otimes\gamma\mapsto \frac{1}{2}(B^*+B),\\
\rho_2:C_0((0,1])\otimes C(\mathbb{Z}/2\mathbb{Z})\to C(S^2_q):\sqrt{{\rm t}}\otimes\gamma\mapsto \sqrt{\frac{1+q^4}{2}}A.
\end{gather*}
Using the defining relations of $C(S^2_{q\infty})$, one can check that
\[
\sum_{i=0}^2\rho_i({\rm t}\otimes 1)=\sum_{i=0}^2\left[\rho_i(\sqrt{{\rm t}}\otimes\gamma)\right]^2=\frac{1}{2}B^*B+\frac{1}{2}BB^*+\frac{1+q^4}{2}A^2=1. 
\]
Hence, $\dim_{\rm LT}^{\mathbb{Z}/2\mathbb{Z}}(C(S^2_{q\infty}))\leq 2$.\hfill$\diamond$
\end{ex}

Note that finiteness of the local-triviality dimension of any 
$\mathbb{Z}/2\mathbb{Z}$-action on a~unital
C*-algebra $A$ is tantamount to the existence of finitely many odd self-adjoint elements
in $A$ whose squares add up to one.
We will show in Section~\ref{but}, that $\dim_{\rm LT}^{\mathbb{Z}/2\mathbb{Z}}(C(S^n_+))=n$ for all $n$.

In the next example we consider a $U(1)$-action for which we not only bound 
the local-triviality dimension but also obtain its actual value. Then, we present
an example of an $SU(2)$-action of a similar flavour.

\begin{ex}[Noncommutative Matsumoto--Hopf fibration]
Let us consider the Matsumoto noncommutative three-sphere $S^3_\theta$ \cite{Mts91,Mts91II}, where $\theta\in(0,1)$. It is 
defined as the universal C*-algebra generated by two normal elements $Z$ and $W$ subject to relations
\[
ZW=e^{2\pi i\theta}WZ,\qquad Z^*Z+W^*W=1.
\]
Define the action of $U(1)$ on $C(S^3_\theta)$ by
\[
Z\mapsto e^{i\varphi}Z,\qquad W\mapsto e^{i\varphi}W,\qquad \mbox{for all}\quad e^{i\varphi}\in U(1).
\]
Since Matsumoto showed that $C(S^3_\theta)^{U(1)}\cong C(S^2)$ \cite{Mts91II}, 
the action considered in this example is sometimes called the 
noncommutative Matsumoto--Hopf fibration (or the~noncommutative Matsumoto--Dirac 
monopole bundle).

Recall that $C(U(1))$ can be viewed as the universal C*-algebra generated by one unitary $U$. Define two 
$U(1)$-equivariant $*$-homomorphisms
\[
\rho_0:C_0((0,1])\otimes C(U(1))\to C(S^3_\theta): \sqrt{{\rm t}}\otimes U\mapsto Z,
\]
\[
\rho_1:C_0((0,1])\otimes C(U(1))\to C(S^3_\theta): \sqrt{{\rm t}}\otimes U\mapsto W.
\]
Note that $\rho_0({\rm t}\otimes 1)+\rho_1({\rm t}\otimes 1)=1$, and hence $\dim_{\rm LT}^{U(1)}(C(S^3_\theta))\leq 1$.

Assume now that $\dim_{\rm LT}^{U(1)}(C(S^3_\theta))= 0$. 
By Proposition~\ref{trivializable}, there is a $U(1)$-equivariant $*$-homo\-mor\-phism 
$C(U(1))\to C(S^3_\theta)$. This map is in particular $\mathbb{Z}/2\mathbb{Z}$-equivariant
and the image of $U$ under this map would contradict \cite[Proposition~3.9]{b-p16}
(which states that no element of $C(S^3_\theta)$ can be both odd and invertible).
Hence, we have that $\dim_{\rm LT}^{U(1)}(C(S^3_\theta))=1$.
\hfill$\diamond$
\end{ex}
Similarly as for the case of $\mathbb{Z}/2\mathbb{Z}$, 
the finiteness of the local-triviality dimension
of any $U(1)$-action on a C*-algebra $A$ can be stated in a different way.
Recall that any $U(1)$-action gives a $\mathbb{Z}$-grading on $A$.
Then the fact that $\dim_{\rm LT}^{U(1)}(A)<\infty$ can be translated into 
existence of finitely
many normal elements $x_i$ of degree $1$ in $A$ 
such that $\sum_i x^*_ix_i=1$.

\begin{ex}[Noncommutative Hopf principal $SU(2)$-bundle]
Let $\theta=(\theta_{ij})$ be an $n\times n$ Hermitian matrix such that $|\theta_{ij}|=1$ for all $i,j$
and with $\theta_{ii}=1$ for each $i$.
The C*-algebra $C(S^{2n-1}_\theta)$ of the {\em odd Natsume--Olsen quantum sphere} 
$S^{2n-1}_\theta$ \cite{no97} is defined as the universal unital C*-algebra generated by
$n$ normal elements $Z_1$, \dots, $Z_n$ satisfying the relations
\[
\sum_{i=1}^nZ^*_iZ_i=1,\qquad Z_jZ_i=\theta_{ij} Z_iZ_j,\qquad i,j=1,2,\ldots, n.
\]

Recall that $C(SU(2))$ can be viewed as the unital universal C*-algebra
generated by two normal elements $\alpha$ and $\gamma$ commuting with each other and
satisfying
\[
\alpha^*\alpha+\gamma^*\gamma=1.
\]
We write the coproduct of $C(SU(2))$ on generators as follows
\[
\Delta(\alpha)=\alpha\otimes\alpha-\gamma\otimes\alpha^*,\qquad 
\Delta(\gamma)=\alpha\otimes\gamma+\gamma\otimes\gamma^*.
\]
In \cite{ls05}, Landi and Suijlekom consider $C(S^7_\theta)$, 
with $\theta_{12}=\theta_{34}=1$ and $\theta_{14}=\theta_{23}=\theta_{13}=\theta_{24}$,
together with an action of $SU(2)$ given by
\begin{align*}
\delta(Z_1)=Z_1\otimes\alpha -Z_2\otimes\alpha^*,\qquad \delta(Z_2)=Z_1\otimes\gamma+Z_2\otimes\gamma^*,\\
\delta(Z_3)=Z_3\otimes\alpha -Z_4\otimes\alpha^*,\qquad \delta(Z_4)=Z_3\otimes\gamma+Z_4\otimes\gamma^*.
\end{align*}
Note that this coaction is well defined because $Z_1$ commutes with $Z_2$ and
$Z_3$ commutes with $Z_4$.

We define two $SU(2)$-equivariant *-homomorphisms
\begin{align*}
\rho_0:C_0((0,1])\otimes C(\mathbb{G})\to C(S^7_\theta): 
\sqrt{\rm t}\otimes\alpha\mapsto Z_1,\;\sqrt{\rm t}\otimes\gamma\mapsto Z_2,\\
\rho_1:C_0((0,1])\otimes C(\mathbb{G})\to C(S^7_\theta): 
\sqrt{\rm t}\otimes\alpha\mapsto Z_3,\;\sqrt{\rm t}\otimes\gamma\mapsto Z_4.
\end{align*}
These maps are well defined because all generators are normal.
Observe that
\[
(\rho_0+\rho_1)({\rm t}\otimes 1)=(\rho_0+\rho_1)({\rm t}\otimes(\alpha^*\alpha+\gamma^*\gamma))
=\sum_{i=1}^4Z^*_iZ_i=1.
\]
Hence, $\dim_{\rm LT}^{SU(2)}(C(S^7_\theta))\leq 1$.\hfill$\diamond$
\end{ex}

Next example shows that both dimensions differ in general.
\begin{ex}[$\mathbb{Z}/2\mathbb{Z}$-action on $3\times 3$ complex matrices]
Let us view $M_3(\mathbb{C})$ as a graph C*-algebra of the graph
\begin{center}
\begin{tikzpicture}
\node[fill=black,circle,label=$v_1$,inner sep=1.5pt] (0) at (0,0) {};
\node[fill=black,label=$v_2$,circle,inner sep=1.5pt] (1) at (2,0) {};
\node[fill=black,label=$v_3$,circle,inner sep=1.5pt] (2) at (4,0) {};
\path (0) edge[draw,->,below] node {$e_{12}$} (1);
\path (1) edge[draw,->,below] node {$e_{23}$} (2);
\end{tikzpicture}
\end{center}
(e.g., see \cite{BPRS00} for a definition of a graph C*-algebra).
This C*-algebra is generated by three orthogonal projections 
$p_1$, $p_2$, $p_3$, associated to vertices, and two partial isometries $s_{12}$ and $s_{23}$, associated to edges,
satisfying
\[
s_{12}s_{12}^*=p_1,\quad s_{12}^*s_{12}=p_2=s_{23}s_{23}^*,\quad s_{23}^*s_{23}=p_3.
\]
It has the following linear basis:
$p_1$, $p_2$, $p_3$, $s_{12}$, $s_{23}$, $s^*_{12}$, $s^*_{23}$, $s_{12}s_{23}$, $s^*_{23}s^*_{12}$.

We consider a $\mathbb{Z}/2\mathbb{Z}$-action given on the generators by
\[
p_i\mapsto p_i,\quad i=1,2,3,\quad s_{12}\mapsto -s_{12},\quad s_{23}\mapsto -s_{23}.
\]
The above action gives a $\mathbb{Z}/2\mathbb{Z}$-grading and any odd self-adjoint 
element in $M_3(\mathbb{C})$ is of the form
\[
x = k_1 s_{12} + k_2 s_{23} + k^*_1 s^*_{12} + k^*_2 s^*_{23},\qquad k_1, k_2\in\mathbb{C}.
\]
Suppose that we have odd self-adjoint elements $x_j\in M_3(\mathbb{C})$, 
$j=0,1,2,\ldots,n$, such that $\sum_{j=0}^nx_j^2=1.$ Then the relation $p_1+p_2+p_3=1$
(sum of the vertex projections equals 1 for any graph C*-algebra of a graph with finitely many vertices)
leads to a contradiction. Hence we
get that $\dim_{\rm LT}^{\mathbb{Z}/2\mathbb{Z}}(M_3(\mathbb{C}))=\infty$.

It is equally straightforward to find two odd self-adjoint elements of 
$M_3(\mathbb{C})$ whose sum of squares is invertible.
Since there is no $\mathbb{Z}/2\mathbb{Z}$-equivariant $*$-homomorphism
$C(\mathbb{Z}/2\mathbb{Z})\to M_3(\mathbb{C})$,
we conclude that $\dim_{\rm WLT}^{\mathbb{Z}/2\mathbb{Z}}(M_3(\mathbb{C}))=1$.\hfill$\diamond$
\end{ex}

\subsection{The $n$-universal bundles and the strong local-triviality dimension}\label{ltuniversal}

Motivated by \cite[\S~19.2]{St99}, we introduce the following definition.
\begin{df}
We say that a locally trivial compact quantum principal bundle $(A,A^\mathbb{G},\mathbb{G})$
is {\em $n$-universal} if and only if for any other compact quantum principal bundle $(B,B^\mathbb{G},\mathbb{G})$
with $\dim_{\rm LT}^\mathbb{G}(B)\leq n$ there exists \mbox{a~$\mathbb{G}$-equi}\-variant
$*$-homomorphism $A\to B$. 
\end{df}
Recall that, in the case of $\mathbb{Z}/2\mathbb{Z}$-actions, to check if a given 
$\mathbb{Z}/2\mathbb{Z}$-C*-algebra $A$ gives rise to a locally trivial compact quantum principal bundle
one needs to find finitely many odd self-adjoint elements $y_i$ such that $\sum_iy_i^2=1$. This in turn means that there
is a unital $\mathbb{Z}/2\mathbb{Z}$-equivariant $*$-homomorphism $C(S^n_+)\to A$, for some $n$,
given by $x_i\mapsto y_i$.
Hence $C(S^n_+)$ is an $n$-universal compact quantum $\mathbb{Z}/2\mathbb{Z}$-bundle.

In Section~\ref{freejoin}, using the notion of the free noncommutative join, we introduced 
the C*-algebra 
$C(E^{\mathrlap{+}\times}_n\mathbb{G})$ for any compact quantum group $\mathbb{G}$.
We also showed that 
$C(S^n_+)$ and $C(E^{\mathrlap{+}\times}_n\mathbb{Z}/2\mathbb{Z})$ are isomorphic
as $\mathbb{Z}/2\mathbb{Z}$-C*-algebras.
In Theorem \ref{universal} we will prove that 
$C(E^{\mathrlap{+}\times}_n\mathbb{G})$ is an $n$-universal $\mathbb{G}$-bundle
for any $\mathbb{G}$, 
but first we need the following lemma.

\begin{lemma}\label{joinup}
Let $A$ be a $\mathbb{G}$-C*-algebra with a coaction $\delta:A\to A\otimes C(\mathbb{G})$.
Then, $\dim_{\rm LT}^\mathbb{G}(A{\mathrlap{+}\times}  C(\mathbb{G}))\leq \dim_{\rm LT}^\mathbb{G}(A)+1$, where we consider $A{\mathrlap{+}\times}  C(\mathbb{G})$ with the
diagonal action of $\mathbb{G}$.
\end{lemma}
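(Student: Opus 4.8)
The plan is to manufacture $d+2$ trivializing maps for $A\freejoin C(\mathbb{G})$ out of $d+1$ trivializing maps for $A$, following the classical template in which a partition of unity $t_0,\dots,t_d$ on $E_dG$ pulls back to $(1-s)t_0,\dots,(1-s)t_d,s$ on $E_dG\ast G=E_{d+1}G$, where $s$ denotes the join coordinate. I may assume $\dim_{\rm LT}^{\mathbb{G}}(A)=d<\infty$, since otherwise there is nothing to prove, and I fix $\mathbb{G}$-equivariant $*$-homomorphisms $\rho_0,\dots,\rho_d\colon C_0((0,1])\otimes C(\mathbb{G})\to A$ with $\sum_{j=0}^d\rho_j({\rm t}\otimes 1)=1$ as in Definition~\ref{lt}. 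Writing $s\in[0,1]$ for the join coordinate, an element of $A\freejoin C(\mathbb{G})$ is a continuous path $f\colon[0,1]\to A\ast_{\mathbb{C}}C(\mathbb{G})$ with $f(0)\in A$ and $f(1)\in C(\mathbb{G})$; I denote by $\iota_A$ and $\iota_{C(\mathbb{G})}$ the canonical inclusions into the amalgamated free product.

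First I would build the $d+1$ damped charts. For $s\in[0,1)$ let $r_s\colon(0,1]\to(0,1]$ be given by $r_s(u)=(1-s)u$; precomposition with $r_s$ is a $*$-homomorphism of $C_0((0,1])$ (the image still vanishes at $0$), and tensoring with $\mathrm{id}_{C(\mathbb{G})}$ yields a $*$-endomorphism $\beta_s$ of $C_0((0,1])\otimes C(\mathbb{G})$ acting only on the first factor, with $\beta_1:=0$. The decisive feature is that $\beta_s({\rm t}\otimes 1)=(1-s)({\rm t}\otimes 1)$ while $\beta_s$ remains multiplicative; this is precisely why the $(1-s)$-damping must be implemented by rescaling the cone variable rather than by scalar multiplication of the output, which would fail to be a homomorphism on higher powers of ${\rm t}$. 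Since $s\mapsto\beta_s$ is point-norm continuous on $[0,1]$ (by uniform continuity of functions in $C_0((0,1])$), the formulas $\sigma_j(\xi)(s):=\iota_A(\rho_j(\beta_s(\xi)))$ define $*$-homomorphisms $\sigma_0,\dots,\sigma_d\colon C_0((0,1])\otimes C(\mathbb{G})\to C([0,1],A\ast_{\mathbb{C}}C(\mathbb{G}))$ whose images lie in $A\freejoin C(\mathbb{G})$: the value at $s=0$ is $\iota_A(\rho_j(\xi))\in A$, and the value at $s=1$ is $0\in C(\mathbb{G})$.

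For the extra chart I would exploit the trivial $C(\mathbb{G})$-end: set $\sigma_{d+1}(f\otimes h)(s):=f(s)\,\iota_{C(\mathbb{G})}(h)$, i.e. $\sigma_{d+1}=\mathrm{id}_{C_0((0,1])}\otimes\iota_{C(\mathbb{G})}$ under the identification $C_0((0,1])\otimes D\cong\{g\in C([0,1],D):g(0)=0\}$. As $f(0)=0$ and $f(1)\in\mathbb{C}$, the image again lies in $A\freejoin C(\mathbb{G})$, now degenerating at the $A$-end $s=0$. Joint unitality is then immediate: $\beta_s({\rm t}\otimes 1)=(1-s)({\rm t}\otimes 1)$ gives $\sum_{j=0}^d\sigma_j({\rm t}\otimes 1)(s)=(1-s)\iota_A(1)=(1-s)1$, while $\sigma_{d+1}({\rm t}\otimes 1)(s)=s\cdot 1$, so the total is the constant path $1$.

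The step I expect to require the most care is $\mathbb{G}$-equivariance. Here I would use that the diagonal coaction on $A\freejoin C(\mathbb{G})$ is induced from the diagonal coaction on $A\ast_{\mathbb{C}}C(\mathbb{G})$ and is therefore fiberwise over the join coordinate $s$, so equivariance may be checked at each $s$ separately. At a fixed $s$ the map $\mathrm{ev}_s\circ\sigma_j$ is the composite $\iota_A\circ\rho_j\circ\beta_s$: the inclusion $\iota_A$ intertwines $\delta_A$ with the restriction of the diagonal coaction and $\iota_{C(\mathbb{G})}$ intertwines the translation coaction $\Delta$ with its restriction, both by the defining universal property of the diagonal coaction; $\beta_s$ is equivariant because it acts trivially on the $C(\mathbb{G})$-factor; and each $\rho_j$ is equivariant by hypothesis. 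The same reasoning applies to $\sigma_{d+1}$. Should the diagonal coaction on $A\ast_{\mathbb{C}}C(\mathbb{G})$ be non-injective, I would pass to its minimal reduced coaction, under which the $\sigma_j$ descend to equivariant maps. Assembling $\sigma_0,\dots,\sigma_{d+1}$ then gives $\dim_{\rm LT}^{\mathbb{G}}(A\freejoin C(\mathbb{G}))\le d+1$.
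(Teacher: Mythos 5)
Your proposal is correct and follows essentially the same route as the paper: the paper defines $\widetilde{\rho_i}({\rm t}\otimes h)(s)=(1-s)\rho_i({\rm t}\otimes h)$ for $i\le d$ and $\widetilde{\rho}_{d+1}({\rm t}\otimes h)(s)=s\cdot h$, which are exactly your $\sigma_j$ and $\sigma_{d+1}$. Your version supplies more justification than the paper's (which merely asserts these are equivariant $*$-homomorphisms), in particular the observation that the $(1-s)$-damping must be realized by rescaling the cone variable via $\beta_s$ rather than by scaling outputs, and the fiberwise equivariance check.
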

\begin{proof}
Let $\dim_{\rm LT}^\mathbb{G}(A)=n$. We have jointly-unital $\mathbb{G}$-equivariant
$*$-homomorphisms $\rho_i:C_0((0,1])\otimes C(\mathbb{G})\to A$, $i=0,1,\ldots,n$.
Let us define the following $\mathbb{G}$-equivariant $*$-homomorphisms
\[
\widetilde{\rho_i}:C_0((0,1])\otimes C(\mathbb{G})\to A{\mathrlap{+}\times} C(\mathbb{G}),\quad i=0,1,\ldots,n+1,
\]
\[
\widetilde{\rho_i}({\rm t}\otimes h)(s):=(1-s)\rho_i({\rm t}\otimes h),\quad h\in C(\mathbb{G}),\quad i=0,1,\ldots,n,\quad s\in [0,1],
\]
\[
\widetilde{\rho_{n+1}}({\rm t}\otimes h)(s):=s\cdot h,\quad h\in C(\mathbb{G}),\quad s\in [0,1].
\]
It is evident that $\sum_i\widetilde{\rho_i}({\rm t}\otimes 1)=1$, and hence we conclude that 
\[\dim_{\rm LT}^\mathbb{G}(A{\mathrlap{+}\times} C(\mathbb{G}))\leq n+1=\dim_{\rm LT}^\mathbb{G}(A)+1.\]
\end{proof}
\begin{cor}\label{leqn}
Let $n$ be a nonnegative integer and let $\mathbb{G}$ be a compact quantum group with the coproduct 
$\Delta:C(\mathbb{G})\to C(\mathbb{G})\otimes C(\mathbb{G})$. 
Then, $\dim_{\rm LT}^\mathbb{G}(C(E_n^{\mathrlap{+}\times}\mathbb{G}))\leq n$
for the diagonal action of $\mathbb{G}$ on $C(E_n^{\mathrlap{+}\times}\mathbb{G})$.
\end{cor}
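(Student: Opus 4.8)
The plan is to argue by induction on $n$, with Lemma~\ref{joinup} supplying the inductive step and the recursive structure of the free noncommutative multi-join supplying the reduction. Recall that, for $n\geq 1$, associativity of the free noncommutative join gives the identification $C(E_n^{\mathrlap{+}\times}\mathbb{G})\cong C(E_{n-1}^{\mathrlap{+}\times}\mathbb{G}){\mathrlap{+}\times} C(\mathbb{G})$, and that the diagonal coaction on the $(n+1)$-fold free join is precisely the diagonal coaction assembled from the diagonal coaction on $C(E_{n-1}^{\mathrlap{+}\times}\mathbb{G})$ and the translation coaction $\Delta$ on the last copy of $C(\mathbb{G})$.

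For the base case $n=0$ we have $C(E_0^{\mathrlap{+}\times}\mathbb{G})=C(\mathbb{G})$ equipped with the translation action $\Delta$. Since the identity map $C(\mathbb{G})\to C(\mathbb{G})$ is a unital $\mathbb{G}$-equivariant $*$-homomorphism, Proposition~\ref{trivializable} yields $\dim_{\rm LT}^\mathbb{G}(C(E_0^{\mathrlap{+}\times}\mathbb{G}))=0$, as required. For the inductive step, assume $\dim_{\rm LT}^\mathbb{G}(C(E_{n-1}^{\mathrlap{+}\times}\mathbb{G}))\leq n-1$. Applying Lemma~\ref{joinup} to $A=C(E_{n-1}^{\mathrlap{+}\times}\mathbb{G})$ with its diagonal coaction gives
\[
\dim_{\rm LT}^\mathbb{G}\bigl(C(E_n^{\mathrlap{+}\times}\mathbb{G})\bigr)\leq \dim_{\rm LT}^\mathbb{G}\bigl(C(E_{n-1}^{\mathrlap{+}\times}\mathbb{G})\bigr)+1\leq n,
\]
which closes the induction and proves the corollary.

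The computation itself is immediate once Lemma~\ref{joinup} is in hand; the only point I would check carefully is the bookkeeping for the diagonal coactions. Specifically, one must confirm that the diagonal $\mathbb{G}$-action on $C(E_n^{\mathrlap{+}\times}\mathbb{G})$ named in the statement agrees, under the associativity isomorphism, with the diagonal action on $C(E_{n-1}^{\mathrlap{+}\times}\mathbb{G}){\mathrlap{+}\times} C(\mathbb{G})$ to which Lemma~\ref{joinup} literally applies. This matching follows from the associativity and functoriality of the free noncommutative join together with the universal property defining the diagonal coaction, but because it is a hypothesis of the lemma it should be verified rather than taken for granted; everything else is a routine induction.
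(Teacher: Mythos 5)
Your proof is correct and is essentially the paper's own argument: the paper's proof is the one-line remark that $\dim_{\rm LT}(\Delta)=0$ and one proceeds by induction using Lemma~\ref{joinup}, which is exactly the base case and inductive step you carry out. Your additional care about matching the diagonal coactions under the associativity isomorphism is a reasonable point to flag, but it does not change the route.
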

\begin{proof}
As $\dim_{\rm LT}(\Delta)=0$, one can proceed by induction using Lemma~\ref{joinup}.
\end{proof}
\begin{ex}[Compact quantum matrix groups]\label{cmqg}
Let $\mathbb{G}$ be a compact quantum matrix group and let the matrix $(u_{jk})\in M_n(C(\mathbb{G}))$
be its fundamental representation (see \cite{Wo87}). Then we can explicitly write down the maps from Definition \ref{lt}
for $C(E^{\mathrlap{+}\times}_d\mathbb{G})$. Indeed, recall that by Theorem \ref{jointhecone3} we have that
\begin{equation}\label{unicone}
C(E^{\mathrlap{+}\times}_d\mathbb{G})\cong (\underbrace{\mathcal{C}C(\mathbb{G})\ast_\mathbb{C}\ldots
\ast_\mathbb{C}\mathcal{C}C(\mathbb{G})}_{d+1})/I,
\end{equation}
where $I$ is the ideal generated by $1-\sum_{i=0}^d {\rm t}_i$\,. Here each ${\rm t}_i$ denotes the inclusion 
of the half-open interval $(0,1]$ into $\mathbb{C}$ in a different copy of $\mathcal{C}C(\mathbb{G})$. Define the following 
$\mathbb{G}$-equivariant $*$-homomorphisms
\[
\rho_i:C_0((0,1])\otimes C(\mathbb{G})\to C(E^{\mathrlap{+}\times}_d\mathbb{G}):
\sqrt{{\rm t}}\otimes u_{jk}\mapsto \left[\sqrt{{\rm t}_i}\otimes u_{jk}\right],
\quad i=0,1,\ldots, d,
\]
where $[\cdot]$ denotes the class of an element of $\mathcal{C}C(\mathbb{G})^{\ast_\mathbb{C}\,d}$ in 
$(\mathcal{C}C(\mathbb{G})^{\ast_\mathbb{C}\,d})/I$.
We slightly abuse notation by denoting the generators
in different copies of the iterated amalgamated free product by the same $u_{jk}$. 
One only needs to check the joint-unitality condition:
\[
\begin{split}
\sum_{i=0}^d\rho_i({\rm t}\otimes 1_{C(\mathbb{G})})&=\sum_{i=0}^d\rho_i\left({\rm t}\otimes \sum_{k=1}^n u^*_{k1}u_{k1}\right)
=\sum_{i=0}^d\sum_{k=1}^n\rho_i\left(\sqrt{\rm t}\otimes u^*_{k1}\right)\rho_i\left(\sqrt{\rm t}\otimes u_{k1}\right)\\
&=\sum_{i=0}^d\sum_{k=1}^n\left[\sqrt{{\rm t}_i}\otimes u^*_{k1}\right]\left[\sqrt{{\rm t}_i}\otimes u_{k1}\right]=
\sum_{i=0}^d\left[{\rm t}_i\otimes \sum_{k=1}^n u^*_{k1} u_{k1}\right]\\
&=\left[\sum_{i=0}^d {\rm t}_i\otimes 1_{C(\mathbb{G})}\right]=\left[1_{C(\mathbb{G})}\otimes 1_{C(\mathbb{G})}\right]
=1_{C(E^{\mathrlap{+}\times}_d\mathbb{G})}.
\end{split}
\]
Here we used the fact that $(u_{jk})$ is a unitary matrix. \hfill$\diamond$
\end{ex}

\begin{thm}\label{universal}
Let $A$ be a $\mathbb{G}$-C*-algebra with a coaction $\delta:A\to A\otimes C(\mathbb{G})$.
Then $\dim_{\rm LT}(\delta)\leq n$ if and only if there exists a $\mathbb{G}$-equivariant unital $*$-homomorphism
$
C(E_n^{\mathrlap{+}\times}\mathbb{G})\to A.
$
In other words, $C(E_n^{\mathrlap{+}\times}\mathbb{G})$ is an $n$-universal
compact quantum principal $\mathbb{G}$-bundle.
\end{thm}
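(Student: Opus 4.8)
The plan is to prove the two implications separately: the reverse one is immediate from earlier results, whereas the forward one requires building the equivariant map explicitly from the universal property of the amalgamated free product. For the ``only if'' direction, assume there is a $\mathbb{G}$-equivariant unital $*$-homomorphism $C(E_n^{\mathrlap{+}\times}\mathbb{G})\to A$. Corollary~\ref{leqn} gives $\dim_{\rm LT}^\mathbb{G}(C(E_n^{\mathrlap{+}\times}\mathbb{G}))\leq n$, and the monotonicity property~\eqref{loceq} then yields $\dim_{\rm LT}(\delta)\leq \dim_{\rm LT}^\mathbb{G}(C(E_n^{\mathrlap{+}\times}\mathbb{G}))\leq n$.

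For the ``if'' direction, suppose $\dim_{\rm LT}(\delta)\leq n$, so that we are handed jointly-unital $\mathbb{G}$-equivariant $*$-homomorphisms $\rho_0,\ldots,\rho_n\colon C_0((0,1])\otimes C(\mathbb{G})\to A$. The key structural observation is that the noncommutative cone $\mathcal{C}C(\mathbb{G})$ is the minimal unitization of the ideal $C_0((0,1])\otimes C(\mathbb{G})$ of functions vanishing at $0$, with evaluation at $0$ realizing the scalar quotient. Hence each $\rho_i$ extends uniquely to a unital $*$-homomorphism $\widehat{\rho}_i\colon\mathcal{C}C(\mathbb{G})\to A$ sending the adjoined unit to $1_A$; since the coaction on the cone restricts to ${\rm id}\otimes\Delta$ on the ideal and fixes the scalar part, each $\widehat{\rho}_i$ remains $\mathbb{G}$-equivariant. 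Invoking the universal property of the free product $\mathcal{C}C(\mathbb{G})^{\ast_\mathbb{C}(n+1)}$ over the scalars, the unital maps $\widehat{\rho}_0,\ldots,\widehat{\rho}_n$ assemble into a single unital $*$-homomorphism $\Psi\colon\mathcal{C}C(\mathbb{G})^{\ast_\mathbb{C}(n+1)}\to A$. Using the presentation~\eqref{unicone} coming from Theorem~\ref{jointhecone3}, I would then check that $\Psi$ annihilates the generator of the defining ideal $I$: as the generator ${\rm t}_i$ of the $i$-th copy corresponds to ${\rm t}\otimes 1_{C(\mathbb{G})}$, joint-unitality gives
\[
\Psi\Big(\sum_{i=0}^n {\rm t}_i\Big)=\sum_{i=0}^n\widehat{\rho}_i({\rm t}_i)=\sum_{i=0}^n\rho_i({\rm t}\otimes 1_{C(\mathbb{G})})=1,
\]
so that $\Psi(1-\sum_{i=0}^n{\rm t}_i)=0$ and $\Psi$ descends through the quotient by $I$ to the sought map $\overline{\Psi}\colon C(E_n^{\mathrlap{+}\times}\mathbb{G})\to A$.

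The step demanding the most care, and the main potential obstacle, is tracking $\mathbb{G}$-equivariance through each construction. One must confirm that the diagonal coaction on the free product --- defined via the universal property in Section~\ref{freejoin}, and replaced by the minimal reduced coaction whenever it fails to be injective --- makes the canonical inclusions equivariant, so that $\Psi$ intertwines the coactions; and that the element $1-\sum_{i=0}^n {\rm t}_i$ is coinvariant, so that the ideal $I$ is $\mathbb{G}$-invariant and $\overline{\Psi}$ is equivariant for the induced quotient coactions. By contrast, the algebraic verification that $\Psi$ kills the relation is routine once ${\rm t}_i$ is identified with ${\rm t}\otimes 1_{C(\mathbb{G})}$.
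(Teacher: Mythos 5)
Your proof is correct and follows essentially the same route as the paper's: the easy direction via Corollary~\ref{leqn} and the monotonicity property~\eqref{loceq}, and the substantive direction by unitizing the maps $\rho_i$ into the cones $\mathcal{C}C(\mathbb{G})$, assembling them through the universal property of the amalgamated free product, and descending through the quotient by the ideal generated by $1-\sum_{i}{\rm t}_i$ using joint-unitality, exactly as the paper does via Theorem~\ref{jointhecone3}. (A cosmetic remark only: your labels ``if'' and ``only if'' are swapped relative to the statement as written, but the mathematics is unaffected, and your explicit attention to equivariance of the unitizations and the $\mathbb{G}$-invariance of the ideal is a point the paper leaves implicit.)
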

\begin{proof}
First suppose that there is a $\mathbb{G}$-equviariant $*$-homomorphism $C(E^{\mathrlap{+}\times}_n\mathbb{G})\to A$.
By (\ref{loceq}) and Corollary \ref{leqn}, we have that 
$\dim_{\rm LT}^\mathbb{G}(A)\leq \dim_{\rm LT}^\mathbb{G}(C(E^{\mathrlap{+}\times}_n\mathbb{G}))\leq n$.

Now suppose that $\dim_{\rm LT}^\mathbb{G}(A)\leq n$. There exist $\mathbb{G}$-equivariant
$*$-homomorphisms
\[
\rho_i:C_0((0,1])\otimes C(\mathbb{G})\to A,\quad i=0,1,\ldots, n,\quad \text{with}\quad \sum_{i=0}^n\rho_i({\rm t}\otimes 1)=1.
\]
The unitizations $\rho^+_i$, $i=0,1,\ldots,n$, of the above maps (by functoriality of the free product) give rise to
a $\mathbb{G}$-equivariant $*$-homomorphism
\[
\rho:\underbrace{\mathcal{C}C(\mathbb{G})\ast_\mathbb{C}\ldots
\ast_\mathbb{C}\mathcal{C}C(\mathbb{G})}_{n+1}\to A.
\]
Using Theorem \ref{jointhecone3}, it suffices to check if $\rho$ descends to the quotient by the ideal generated by
the element $\sum_i {\rm t}_i-1$. This is however a consequence of the joint-unitality condition.
\end{proof}

Theorem \ref{universal} motivates another definition, where the free noncommutative join 
is replaced by the equivariant noncommutative join.
\begin{df}
Let $A$ a unital $\mathbb{G}$-C*-algebra with a coaction $\delta:A\to A\otimes C(\mathbb{G})$.
Given a nonnegative integer $d$, we say that $\delta$ has the {\em strong local-triviality dimension}
at most $d$, written $\dim_{\rm SLT}(\delta)\leq d$, if there exists a $\mathbb{G}$-equivariant~$*$-homomorphism
$$\rho :C(E_d^{\Delta}\mathbb{G})\longrightarrow A.$$
We set $\dim_{\rm{SLT}}(\delta)=\infty$ if no such $d$ exists.
\end{df}

Since $C(E_0^\Delta\mathbb{G})=C(\mathbb{G})$ and due to Proposition~\ref{dimzero}, for any
coaction $\delta$, we obtain
\begin{equation}
\dim_{\rm WLT}(\delta)=0\iff \dim_{\rm LT}(\delta)=0\iff \dim_{\rm SLT}(\delta)=0.
\end{equation}

Using the same formulas as in the proof of Lemma~\ref{joinup}, 
one can verify the following result (see~\cite[Proposition~3.4]{cp-19}).

\begin{prop}
Let $A$ be a $\mathbb{G}$-C*-algebra with a coaction $\delta:A\to A\otimes C(\mathbb{G})$.
Then, $\dim_{\rm LT}^\mathbb{G}(A\overset{\delta}{\circledast}  C(\mathbb{G}))\leq \dim_{\rm LT}^\mathbb{G}(A)+1$, where we 
consider $A\overset{\delta}{\circledast}  C(\mathbb{G})$ with the analog of the diagonal action of $\mathbb{G}$.
Consequently, $\dim_{\rm LT}^\mathbb{G}(C(E^\Delta_n\mathbb{G}))\leq n$ for all $n$.
\end{prop}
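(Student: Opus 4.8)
The plan is to mimic the proof of Lemma~\ref{joinup} essentially verbatim, replacing the free join by the equivariant join and inserting the coaction $\delta$ wherever the endpoint condition at $s=0$ demands it. First I would set $d=\dim_{\rm LT}^\mathbb{G}(A)$ and fix jointly-unital $\mathbb{G}$-equivariant $*$-homomorphisms $\rho_0,\ldots,\rho_d\colon C_0((0,1])\otimes C(\mathbb{G})\to A$ with $\sum_{j=0}^d\rho_j({\rm t}\otimes 1)=1$. Recall that $A\overset{\delta}{\circledast}C(\mathbb{G})$ consists of continuous $A\otimes C(\mathbb{G})$-valued functions $f$ on $[0,1]$ with $f(0)\in\delta(A)$ and $f(1)\in\mathbb{C}\otimes C(\mathbb{G})$, and that the relevant ``diagonal-analog'' action is the one touching the second tensor leg alone, i.e.\ the coaction $\widehat{\delta}$ sending $f$ to the function $s\mapsto({\rm id}_A\otimes\Delta)(f(s))$.

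Next I would define $d+2$ maps $\widetilde{\rho}_0,\ldots,\widetilde{\rho}_{d+1}\colon C_0((0,1])\otimes C(\mathbb{G})\to A\overset{\delta}{\circledast}C(\mathbb{G})$ by
\[
\widetilde{\rho}_i({\rm t}\otimes h)(s):=(1-s)\,\delta(\rho_i({\rm t}\otimes h)),\quad i=0,\ldots,d,\qquad \widetilde{\rho}_{d+1}({\rm t}\otimes h)(s):=s\,(1\otimes h),
\]
for $h\in C(\mathbb{G})$ and $s\in[0,1]$. As in Lemma~\ref{joinup}, these extend to genuine $*$-homomorphisms (the scalar $(1-s)$ arising from rescaling the argument of the $C_0((0,1])$-variable, so multiplicativity is preserved). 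They land in the join: at $s=0$ the value of $\widetilde{\rho}_i$ lies in $\delta(A)$ and $\widetilde{\rho}_{d+1}$ vanishes, while at $s=1$ the value of $\widetilde{\rho}_i$ vanishes and that of $\widetilde{\rho}_{d+1}$ lies in $\mathbb{C}\otimes C(\mathbb{G})$, so both endpoint conditions hold. Joint-unitality is then immediate from $\delta(1)=1\otimes 1$: evaluating $\sum_{i=0}^{d+1}\widetilde{\rho}_i({\rm t}\otimes 1)$ at $s$ gives $(1-s)\,\delta\!\left(\sum_{j=0}^d\rho_j({\rm t}\otimes 1)\right)+s\,(1\otimes 1)=(1-s)(1\otimes 1)+s(1\otimes 1)=1$.

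The one genuinely new point---and the step I expect to require the most care---is verifying $\mathbb{G}$-equivariance of the $\widetilde{\rho}_i$ with respect to $\widehat{\delta}$, since the join action now touches only the second leg whereas the formula for $i\le d$ already carries a $C(\mathbb{G})$-factor coming from $\delta$. For $\widetilde{\rho}_{d+1}$ this is transparent. For $i\le d$ the identity to check is $\widehat{\delta}\circ\widetilde{\rho}_i=(\widetilde{\rho}_i\otimes{\rm id})\circ({\rm id}\otimes\Delta)$, which upon unravelling both sides on a generator ${\rm t}\otimes h$ and using the equivariance of $\rho_i$ (in the form $\delta\circ\rho_i=(\rho_i\otimes{\rm id})\circ({\rm id}\otimes\Delta)$) reduces to $({\rm id}_A\otimes\Delta)\,\delta(\rho_i({\rm t}\otimes h))=(\delta\otimes{\rm id})\,\delta(\rho_i({\rm t}\otimes h))$; this is precisely the coassociativity of $\delta$. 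Hence all $\widetilde{\rho}_i$ are equivariant, and they witness $\dim_{\rm LT}^\mathbb{G}(A\overset{\delta}{\circledast}C(\mathbb{G}))\le d+1$.

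Finally, for the consequence I would argue by induction on $n$. The base case $\dim_{\rm LT}^\mathbb{G}(C(E_0^{\Delta}\mathbb{G}))=\dim_{\rm LT}(\Delta)=0$ holds because the identity map $C(\mathbb{G})\to C(\mathbb{G})$ is $\mathbb{G}$-equivariant, so Proposition~\ref{trivializable} applies. The inductive step applies the inequality just proved with $A=C(E_{n-1}^{\Delta}\mathbb{G})$, yielding $\dim_{\rm LT}^\mathbb{G}(C(E_n^{\Delta}\mathbb{G}))\le\dim_{\rm LT}^\mathbb{G}(C(E_{n-1}^{\Delta}\mathbb{G}))+1\le n$, which completes the proof.
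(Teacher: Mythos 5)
Your proposal is correct and is essentially the paper's own argument: the paper proves this proposition by simply invoking ``the same formulas as in the proof of Lemma~\ref{joinup}'', and your maps are exactly those formulas with $\delta$ inserted at the $s=0$ endpoint, together with the (correct) observation that equivariance for the action ${\rm id}_A\otimes\Delta$ reduces to coassociativity of $\delta$ plus equivariance of the $\rho_i$. The induction for the consequence likewise matches Corollary~\ref{leqn}.
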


Using the above proposition and the inequality~\eqref{loceq}, 
for any coaction $\delta$, we obtain inequalities between all three local-triviality dimensions
\begin{equation}
\dim_{\rm WLT}(\delta)\leq \dim_{\rm LT}(\delta)\leq \dim_{\rm SLT}(\delta).
\end{equation}

Next example shows that the strong local-triviality dimension differs from the previously defined local-triviality dimensions.
\begin{ex}[Antipodal action on odd free spheres]
Consider the $\mathbb{Z}/2\mathbb{Z}$-action on $C(S^1_+)$ described in
Example~\ref{antifreesphere}.
We will show that $\dim_{\rm SLT}^{\mathbb{Z}/2\mathbb{Z}}(C(S^1_+))\geq 2$.
Indeed, if we would have that $\dim_{\rm SLT}^{\mathbb{Z}/2\mathbb{Z}}(C(S^1_+))\leq 1$,
then we would obtain a $\mathbb{Z}/2\mathbb{Z}$-equivariant $*$-homomorphism
\[
f:C(S^1)\cong C(E_1\mathbb{Z}/2\mathbb{Z})\cong
 C(E^\Delta_1\mathbb{Z}/2\mathbb{Z})\longrightarrow C(S^1_+)
\longrightarrow C(S^1),
\]
where the second arrow is simply the abelianization.
Next, since $K_1(C(S^1_+))=0$, the induced map
\[
f_*:K_1(C(S^1))\longrightarrow K_1(C(S^1))
\]
is trivial. This contradicts the classical Borsuk--Ulam theorem
(see \cite[Theorem~3.1.1]{b-p16phd}).
Similarly, if $n$ is odd, we have that
$\dim_{\rm SLT}^{\mathbb{Z}/2\mathbb{Z}}(C(S^n_+))\geq n+1$. \hfill $\diamond$
\end{ex}

\section{Locally trivial compact principal bundles}\label{classical}

In this section we prove that for unital commutative C*-algebras our definition of the locally trivial 
compact quantum principal bundle recovers
the notion of the locally trivial compact principal bundle in topology.
First, let us recall the definition of a locally trivial principal $G$-bundle, which is the main motivation of this work.

\begin{df}
Let $\pi:X\to M$ be a fiber bundle of topological spaces and let $X$ be equipped
with a right action of a topological group $G$. The quadruple $(X,M,\pi,G)$ is called a {\em principal
$G$-bundle} if the following axioms hold:
\begin{enumerate}
\item For any $x\in X$ and $g\in G$, we have $\pi(xg)=\pi(x)$.
\item For each $m\in M$, there exists an open neighbourhood $U$ of $m$ in $M$ and a fiber-preserving $G$-equivariant
homeomorphism $\varphi:\pi^{-1}(U)\to U\times G$, which is called a~trivialization of $\pi$ over $U$ with a typical
fibre $G$.
\end{enumerate}
\end{df}

Point (2) above describes the local triviality of a principal $G$-bundle. 
From (1) and (2) one can prove that the
action of $G$ on $X$ is free. On the other hand, by a result of Mostow \cite[Theorem~3.1]{mst57}, 
free actions of compact Lie groups on regular topological spaces give rise to locally trivial principal bundles.

We gave a standard definition of a principal G-bundle 
that can be found in textbooks, e.g. \cite{St99, tD08}, and should be contrasted
with the definition of a {\em Cartan principal $G$-bundle} \cite{BHMS07, Ca67}, where one assumes that the action of $G$ is free
and proper and that $M\cong X/G$ (in the original work of Cartan, it is phrased equivalently as the continuity of the translation 
map and a certain density condition) instead of (2).

For our purposes, we need a slight reformulation of the notion of local triviality. This can be achieved
by means of certain invariants that are always finite for locally trivial compact principal $G$-bundles.

\begin{df}[\cite{Sch61}]
The {\em Schwarz genus}~ of a $G$-space $X$, denoted by ${\rm g}_G(X)$, is the smallest number $n$ such that
$X$ can be covered with open $G$-invariant subsets $U_0,U_1,\ldots,U_n$ with the property that
for every $i=0,1,2,\ldots,n$, there exists a $G$-equivariant map $U_i\to G$. If no such $n$ exists,
we write ${\rm g}_G(X)=\infty$.
\end{df}

Now let $X$ and $Y$ be two $G$-spaces and suppose that there exists a $G$-map $X\to Y$.
Then, there is an inequality
\begin{equation}\label{scheqi}
{\rm g}_G(X)\leq {\rm g}_G(Y).
\end{equation}

Note that if $G$ is a compact Hausdorff group acting on a compact Hausdorff space $X$, then
${\rm g}_G(X) <\infty$ if and only if $\pi:X\to X/G$ is a locally trivial principal $G$-bundle. One can say even more,
if ${\rm g}_G(X)=n$, this means that $X/G$ can be covered with at most $n+1$ trivializing open sets. Recall that
\begin{equation}\label{schdim}
{\rm g}_G(X) \leq \dim(X/G). 
\end{equation}
Indeed, $\dim (X/G) \leq d$ if and only if any finite open cover of $X/G$ can be refined by a finite open cover that splits into $(d+1)$ disjoint families of open sets. Note that open subsets of a trivializing open set are still trivializing, and thus if a finite open cover of $X/G$ consists of trivializing open sets, then any finite open cover that refines it also consists of trivializing open sets. Combining this with the observation that a disjoint union of trivializing open sets is still a trivializing open set, we obtain the desired inequality. 


In \cite[Theorem~9]{Sch61} Schwarz showed that for any topological group $G$ there is an inequality
\begin{equation}\label{schmil}
{\rm g}_G(E_nG)\leq n.
\end{equation}

We need another invariant which was introduced for purposes of the Borsuk--Ulam-type theorems 
(e.g., see \cite{Ma03}).

\begin{df}
Let $X$ be a $G$-space. We define the {\em $G$-index} of $X$ by
\[
{\rm ind}_G(X):={\rm min}\{n:\exists~G\text{-}{\rm map}~X\to E_nG\}.
\]
If there is no such $G$-map, we write ${\rm ind}_G(X)=\infty$.
\end{df}

As for the Schwarz genus, we have that if there is an $G$-equivariant map $X\to Y$ between two
$G$-spaces, then 
\begin{equation}\label{indeqi}
{\rm ind}_G(X)\leq {\rm ind}_G(Y).
\end{equation}
The following result shows that the above invariants are equal for compact Hausdorff spaces.
As we did not find it in the literature, we give its proof for reader's convenience.
\begin{prop}\label{schind}
Let $G$ be a compact Hausdorff group acting continuously on a compact Hausdorff $G$-space $X$. Then,
\[
{\rm g}_G(X)={\rm ind}_G(X).
\]
\end{prop}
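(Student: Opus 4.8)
The plan is to prove the two inequalities $\mathrm{ind}_G(X) \leq \mathrm{g}_G(X)$ and $\mathrm{g}_G(X) \leq \mathrm{ind}_G(X)$ separately, using a partition-of-unity argument in the first direction and a pullback argument in the second.

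\textbf{The direction $\mathrm{ind}_G(X) \leq \mathrm{g}_G(X)$.} Suppose $\mathrm{g}_G(X) = n < \infty$, so there is a cover of $X$ by open $G$-invariant sets $U_0, \ldots, U_n$ together with $G$-equivariant maps $\sigma_i \colon U_i \to G$. First I would descend to the orbit space: since each $U_i$ is $G$-invariant, it descends to an open set $V_i = U_i/G$ in the compact Hausdorff (hence paracompact normal) space $X/G$, and these cover $X/G$. I would then choose a partition of unity $\{\lambda_i\}_{i=0}^n$ on $X/G$ subordinate to $\{V_i\}$ and pull it back to $G$-invariant functions $\widetilde{\lambda}_i$ on $X$ with $\sum_i \widetilde{\lambda}_i = 1$ and $\mathrm{supp}(\widetilde{\lambda}_i) \subseteq U_i$. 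The idea is to build a $G$-map $X \to E_n G = G \ast \cdots \ast G$ by sending $x$ to the point with join coordinates $(\widetilde{\lambda}_0(x), \ldots, \widetilde{\lambda}_n(x))$ in $[0,1]$ and group coordinates $\sigma_i(x) \in G$ in the $i$-th slot. The only subtlety is that $\sigma_i(x)$ is defined only for $x \in U_i$; but this is exactly reconciled by the collapsing relations in the join: when $\widetilde{\lambda}_i(x) = 0$ the $i$-th group coordinate is irrelevant, and $\mathrm{supp}(\widetilde{\lambda}_i) \subseteq U_i$ guarantees $\sigma_i(x)$ is defined whenever $\widetilde{\lambda}_i(x) \neq 0$. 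Checking continuity at the collapse points and $G$-equivariance (which follows from the equivariance of each $\sigma_i$ and the $G$-invariance of each $\widetilde{\lambda}_i$) then yields a $G$-map $X \to E_n G$, so $\mathrm{ind}_G(X) \leq n$.

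\textbf{The direction $\mathrm{g}_G(X) \leq \mathrm{ind}_G(X)$.} Suppose $\mathrm{ind}_G(X) = n < \infty$, witnessed by a $G$-map $f \colon X \to E_n G$. By the inequality~\eqref{scheqi} for the Schwarz genus under equivariant maps, $\mathrm{g}_G(X) \leq \mathrm{g}_G(E_n G)$, and by Schwarz's inequality~\eqref{schmil} we have $\mathrm{g}_G(E_n G) \leq n$. Composing these gives $\mathrm{g}_G(X) \leq n = \mathrm{ind}_G(X)$ directly. Thus this direction is essentially immediate from results already available in the excerpt.

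I expect the main obstacle to be the first direction, specifically the careful construction and verification of continuity of the map $X \to E_n G$ at the points where some join coordinates vanish, together with confirming that the equivariant local sections $\sigma_i$ glue coherently through the join's collapsing relations. The partition-of-unity step relies on $X/G$ being paracompact, which holds since $X$ is compact Hausdorff and $G$ is compact (so $X/G$ is compact Hausdorff). Once the map is shown to be well defined and continuous, equivariance is a routine check. The second inequality, by contrast, is a one-line consequence of \eqref{scheqi} and \eqref{schmil}, so no real difficulty is anticipated there.
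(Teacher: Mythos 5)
Your proposal is correct and follows essentially the same route as the paper: the inequality $\mathrm{g}_G(X)\leq \mathrm{ind}_G(X)$ via the monotonicity \eqref{scheqi} combined with Schwarz's bound \eqref{schmil}, and the inequality $\mathrm{ind}_G(X)\leq \mathrm{g}_G(X)$ by assembling the local equivariant maps $U_i\to G$ and a partition of unity on $X/G$ into a $G$-map $X\to E_nG$ written in join (simplicial) coordinates. The paper's proof is exactly this argument, with the same handling of the collapse relations in the join.
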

\begin{proof}
First assume that ${\rm ind}_G(X)=n$. We have a $G$-map $X\to E_nG$. 
Using \eqref{scheqi} and \eqref{schmil}, 
we get that ${\rm g}_G(X)\leq {\rm g}_G(E_nG)\leq n={\rm ind}_G(X)$.

Now assume that ${\rm g}_G(X)=n$. 
Then, we know that there are $G$-equivariant maps $\phi_i \colon U_i=\pi^{-1}(V_i) \to G$ for some
trivializing cover $\{V_i\}_{i=0}^{n}$. 
Let $\{f_i\}_{i=0}^{n}$ be a partition of unity subordinate to $\{V_i\}_{i=0}^{n}$. We define a $G$-map 
\[
    \psi \colon X \to E_n G \; , \quad x \mapsto \sum_{i=0}^{n} f(\pi (x)) \phi_i(x) \; ,
\]
where we use the simplicial notation for the multi-join. 
The above $G$-map is well defined. Indeed, the condition $\sum_{i=0}^nf_i(m)=1$,
for every $m\in X/G$, assures that the image of $\psi$ lands in $\Delta^{n}\times G^{n+1}$, where $\Delta^{n}$ is the $n$-simplex.
Equations \eqref{indeqi} and \eqref{schmil}, and the existence of $\psi$  imply that 
${\rm ind}_G(X)\leq {\rm ind}_G(E_nG)\leq n={\rm g}_G(X)$.
\end{proof}
We emphasize that Proposition \ref{schind} shows that, for compact Hausdorff $X$ and $G$, ${\rm ind}_G(X)<\infty$ 
if and only if the $G$-bundle $\pi:X\to X/G$ is locally trivial. Thus, we obtain a different characterization of 
locally trivial compact principal $G$-bundles.

Next, we show that for unital commutative C*-algebras, i.e. algebras of complex-valued continuous functions on compact Hausdorff topological spaces, Definition \ref{lt}
recovers the usual notion of local triviality.
\begin{thm}
Let $G$ be a compact Hausdorff group, let $X$ be a compact Hausdorff space, and 
let $G$ act continuously on $X$. Denote by
$\alpha\colon G\to{\rm Aut}(C(X))$ the induced action.
Then, 
\begin{equation}
{\rm ind}_G(X)={\rm dim}_{\rm LT}(\alpha).
\end{equation}
\end{thm}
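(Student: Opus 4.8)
The plan is to prove the equality of the two numbers by establishing both inequalities, translating the topological statement about equivariant maps into $*$-homomorphisms via Gelfand--Naimark duality, and then identifying the multi-join $E_n G$ with its dual C*-algebra. Throughout I would write $\alpha$ for the induced action on $C(X)$, and I would exploit the fact that both ${\rm ind}_G(X)$ and ${\rm dim}_{\rm LT}(\alpha)$ are defined by the existence of a single equivariant morphism into an $n$-universal object: a $G$-map $X \to E_n G$ on the topological side, and a $G$-equivariant $*$-homomorphism $C(E_n^{{\mathrlap{+}\times}} G)\to C(X)$ on the algebraic side (using Theorem~\ref{universal}, which identifies ${\rm dim}_{\rm LT}(\alpha)\le n$ with the latter).

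The key observation that makes the two sides match is that for the \emph{classical} group $G$, the free noncommutative multi-join collapses to the commutative algebra of functions on the ordinary multi-join. Concretely, I would first show that
\begin{equation}\label{planiso}
C(E_n^{{\mathrlap{+}\times}} G)\cong C(E_n G)
\end{equation}
as $G$-C*-algebras. The point is that when $C(\mathbb{G})=C(G)$ is commutative, the amalgamated free product $C(G)\ast_{\mathbb{C}} C(G)$ appearing inside the free join, after passing to the quotient described in Theorem~\ref{jointhecone3} and taking the abelianization relevant to a classical group action, reduces to $C(G\times G)=C(G)\otimes C(G)$; more precisely, the fiberwise description in the proof of Theorem~\ref{jointhecone3} identifies each fiber with the appropriate tensor power of $C(G)$, and reassembling these fibers recovers exactly the commutative algebra of functions on the topological join $G\ast G$ as described in the cone picture~\eqref{jointhecone}. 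Iterating gives~\eqref{planiso}.

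With the isomorphism~\eqref{planiso} in hand, the two inequalities become formal. For ${\rm dim}_{\rm LT}(\alpha)\le {\rm ind}_G(X)$: if ${\rm ind}_G(X)=n$, there is a $G$-map $X\to E_n G$, which by contravariant functoriality of $C(-)$ yields a $G$-equivariant $*$-homomorphism $C(E_n G)\to C(X)$, hence by~\eqref{planiso} a $G$-equivariant $*$-homomorphism $C(E_n^{{\mathrlap{+}\times}} G)\to C(X)$, whence ${\rm dim}_{\rm LT}(\alpha)\le n$ by Theorem~\ref{universal}. For the reverse inequality: if ${\rm dim}_{\rm LT}(\alpha)\le n$, Theorem~\ref{universal} gives a $G$-equivariant $*$-homomorphism $C(E_n^{{\mathrlap{+}\times}} G)\cong C(E_n G)\to C(X)$; since both $E_n G$ and $X$ are compact Hausdorff and the morphism is unital, Gelfand--Naimark duality produces a continuous $G$-map $X\to E_n G$, so ${\rm ind}_G(X)\le n$. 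Combining the two yields the desired equality.

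The main obstacle I anticipate is the proof of~\eqref{planiso}, that is, checking carefully that the free noncommutative join genuinely becomes commutative and recovers $C(E_n G)$ when $G$ is a classical group, together with verifying that the $G$-actions (the diagonal coaction on the free join versus the diagonal translation action on $E_n G$) correspond under the isomorphism. This is essentially a commutativity-and-equivariance bookkeeping step: one must confirm that the generators $u_{jk}$ of $C(G)$ from the matrix-coefficient picture (as in Example~\ref{cmqg}) commute across the different copies after amalgamation, so that the free product indeed abelianizes to a tensor product, and that the diagonal coaction defined via the universal property of the amalgamated free product restricts on functions to the diagonal translation action on $G\ast G$. Equivalently, one should check that the equivariant $*$-homomorphism $C(E_n G)\to C(X)$ dual to a given $G$-map and the one produced by Theorem~\ref{universal} from jointly-unital families $\{\rho_i\}$ are two descriptions of the same data; the partition-of-unity argument underlying Proposition~\ref{schind} and the joint-unitality condition $\sum_i \rho_i({\rm t}\otimes 1)=1$ are precisely matched, which is what makes the whole correspondence transparent once~\eqref{planiso} is secured.
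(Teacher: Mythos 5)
The overall architecture of your argument --- reducing both sides to the existence of a single equivariant morphism into an $n$-universal object via Theorem~\ref{universal} and then matching the two universal objects --- is reasonable, but it hinges on the claimed isomorphism $C(E_n^{{\mathrlap{+}\times}} G)\cong C(E_n G)$, and that isomorphism is false. For a classical group $G$ the amalgamated free product $C(G)\ast_{\mathbb{C}}C(G)$ does not reduce to $C(G)\otimes C(G)$: amalgamation over $\mathbb{C}$ only identifies the units and imposes no commutation between the two copies, so the interior fibers in the proof of Theorem~\ref{jointhecone3} are genuinely free products, not tensor powers, and the generators $u_{jk}$ from different copies do \emph{not} commute after amalgamation. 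The paper itself supplies the counterexample: by Example~\ref{frees}, $C(E_n^{{\mathrlap{+}\times}}\mathbb{Z}/2\mathbb{Z})\cong C(S^n_+)$ is the free sphere, a noncommutative C*-algebra for $n\geq 1$, whereas $C(E_n\mathbb{Z}/2\mathbb{Z})=C(S^n)$ is commutative. Indeed the distinction between $\dim_{\rm LT}$ and $\dim_{\rm SLT}$, and the computation $K_*(C(S^n_+))\cong K_*(\mathbb{C})$, exist precisely because the free join does not collapse to the classical join when the acting group is classical.

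Your strategy can be repaired by replacing the isomorphism with the correct statement that the \emph{abelianization} of $C(E_n^{{\mathrlap{+}\times}} G)$ is $G$-equivariantly isomorphic to $C(E_n G)$ (the character space of $C(G)\ast_{\mathbb{C}}C(G)$ is $G\times G$ by the universal property, and reassembling the fibers over $[0,1]$ recovers $G\ast G$). Then one direction uses that any $*$-homomorphism from $C(E_n^{{\mathrlap{+}\times}} G)$ into the commutative target $C(X)$ factors through this abelianization, and the other direction precomposes the dual of a $G$-map $X\to E_nG$ with the abelianization surjection $C(E_n^{{\mathrlap{+}\times}} G)\twoheadrightarrow C(E_nG)$. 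The paper's own proof sidesteps the free join entirely: it unitizes each $\rho_i$ to a map $C(\mathcal{C}G)\to C(X)$, dualizes to $\hat\rho_i\colon X\to\mathcal{C}G$, and assembles $x\mapsto\sum_i\hat\rho_i(x)$ in simplicial coordinates, with joint-unitality guaranteeing that the barycentric coordinates sum to $1$; the converse runs the same construction backwards through $E_nG\hookrightarrow(\mathcal{C}G)^{n+1}$ and the coordinate projections. As written, your proof does not go through.
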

\begin{proof}
Let ${\rm dim}_{\rm LT}(\alpha)=n$. 
Take equivariant $*$-homomorphisms from Definition~\ref{lt} and denote by
\[
\rho_i:(C_0((0,1])\otimes C(G))^{+}\cong C(\mathcal{C}G)\rightarrow C(X),\qquad i=0,1,\ldots,n,
\]
their unitizations. 
Note that $\sum_{i=0}^n\rho_i({\rm t}\otimes 1)=1$. We dualize the above to obtain $G$-equivariant continuous maps
\[
\hat{\rho}_i:X\rightarrow\mathcal{C}G, \qquad i=0,1,\ldots,n.
\]
Next, we define a continuous $G$-map
\[
\psi:X\rightarrow E_nG,\quad
x\mapsto \sum_{i=0}^n\hat{\rho}_i(x),
\]
where we used simplicial coordinates for $E_nG$. Every element $\hat{\rho}_i(x)$ is a class in $[0,1]\times G$. Let us fix an 
arbitrary point $x\in X$ and let $\hat{\rho}_i(x):=[(s_i,g_i)]$ for some $s_i\in [0,1]$ and $g_i\in G$. We only need to check if $
\sum_{i=0}^n s_i=1$ (with each $s_i$ being non-zero):
\[
1=\sum_{i=0}^{n}(\rho_i({\rm t}\otimes 1))(x)
=\sum_{i=0}^{n}({\rm t}\otimes 1)(\hat{\rho}_i(x))
=\sum_{i=0}^{n}({\rm t}\otimes 1)([(s_i,g_i)])
=\sum_{i=0}^{n}s_i\,.
\]
Hence, ${\rm ind}_G(X)\leq n={\rm dim}_{\rm LT}(\alpha)$.

Now suppose that ${\rm ind}_G(X)=n$. We have a $G$-equivariant map
$
\psi:X\rightarrow E_nG.
$
Define $G$-equivariant continuous maps
$$
\hat{\rho}_i:X\overset{\psi}{\rightarrow}E_nG\overset{\rm \iota}{\hookrightarrow}(\mathcal{C}G)^{n+1}\overset{pr_{i+1}}
{\twoheadrightarrow}\mathcal{C}G~.
$$
where $i=0,1,\ldots,n$. Again each element $\hat{\rho}_i(x)$ is a class in $[0,1]\times G$. Let us denote it as previously by 
$\hat{\rho}_i(x):=[(s_i,g_i)]$, where $x\in X$ is arbitrary. By definition $\sum_{i=0}^{n} s_i=1$ and using the above calculation 
one can show that $\sum_{i=0}^{n}\rho_i({\rm t}\otimes 1)=1$. This implies that 
${\rm dim}_{\rm LT}(\alpha)\leq n={\rm ind}_G(X)$.
\end{proof}

\begin{rem}
Observe that for any compact Hausdorff group $G$ and any compact Hausdorff $G$-space $X$
we obtain
\[
\dim_{\rm WLT}^G(C(X))=\dim_{\rm LT}^G(C(X))=\dim_{\rm SLT}^G(C(X)).
\] 
However, in the previous section we showed that all the dimensions can take different values in general.\hfill$\diamond$
\end{rem}

We end this section by some remarks of C*-algebraic flavour.
Let $G$ be a compact Hausdorff group, $X$ a compact Hausdorff space, 
and $\pi:X\to X/G$ be a compact principal $G$-bundle with the local-triviality dimension equal to $n$.
Then we have $G$-equivariant $*$-homo\-morphisms $\rho_i:C_0((0,1])\otimes C(G)\to C(X)$,
for $i=0,\ldots,n$, such that $\sum_i\rho_i({\rm t}\otimes 1)=1$.
Now let $p_i:=\rho_i({\rm t}\otimes 1)\in C(X)$. Each $p_i$ is a positive contractive element.
We consider C*-subalgebras of $C(X)$ of the form
\[
A_i:=\{\sqrt{p_i}f\sqrt{p_i}~|~f\in C(X/G)\}^{\rm cls}\subseteq C(X/G),\quad i=0,...,n.
\]

Since $C(X)$ is commutative, each $A_i$ is an ideal of
functions supported on an open set $U_i\subseteq X/G$ over which the principal bundle $X\to X/G$
is trivial.
By the joint-unitality condition, we have that
$$
1\in A_0+A_1+\ldots+A_n.
$$
Next, each $\rho_i$ induces a $\mathbb{G}$-equivariant unital $*$-homomorphism
$$
\psi_i:C(G)\to \mathcal{M}(A_i),
$$
where $\mathcal{M}(A_i)$ is the multiplier algebra of $A_i$. 

\begin{rem}
Note that one could consider the hereditary C*-subalgebras $A_i$ for actions of compact quantum groups
on arbitrary unital C*-algebras with finite local-triviality dimension. However, further investigations are needed
to establish if they play a similar role to ideals of functions supported on the open sets constituting
a~trivializing cover.\hfill$\diamond$
\end{rem}

\section{Relations with piecewise triviality and the Rokhlin dimension}\label{relations}

In this section, we examine the connection between the local-triviality dimension and some other related notions, i.e. piecewise triviality and the Rokhlin dimension.

\subsection{Piecewise triviality}

We start by exploring the connection of the local-triviality dimension with piecewise triviality. 
We recall the definition of piecewise triviality in the classical context.

\begin{df}
A Cartan principal bundle $(X,\pi, M, G)$ is called {\em piecewise trivial}, 
if there exist a covering of $M$ by finitely many closed
sets $V_0,\ldots, V_n$ and fibre-preserving $G$-equivariant homomorphisms $\chi_i:\pi^{-1}(V_i)\to V_i\times G$,
$i=0,1,\ldots,n$.
\end{df}

This definition was 
introduced in \cite{BHMS07} along with an example (the {\em bubble space}) of a Cartan principal $G$-bundle 
that is piecewise trivial, but not locally trivial.
For compact Hausdorff spaces, local triviality implies piecewise triviality. Indeed, for the cover $\{V_i\}$ in the above
definition, take the supports of functions of a partition of unity subordinate to the open trivializing cover given by local triviality.

The concept of piecewise triviality admits a straightforward generalization to the realm of noncommutative geometry.

\begin{df}
Let $A$ be a unital \mbox{$\mathbb{G}$-C*-al}ge\-bra, where $\mathbb{G}$ is a compact quantum group. An action of
$\mathbb{G}$ on $A$ is said to be {\em piecewise trivial}~\cite{HKMZ11}, if for some $n\in\mathbb{N}$ there exist $\mathbb{G}$-invariant closed ideals
$I_0,\ldots,I_n$ of $A$, such that $\bigcap_{i=0}^nI_i=0$, and unital $\mathbb{G}$-equivariant $*$-homomorphisms
$\chi_i:C(\mathbb{G})\to A/I_i$.
\end{df}
Note that according to this definition, if a simple $\mathbb{G}$-C*-algebra is piecewise trivial
it is in fact trivializable.

In contrast with the classical case, local triviality in the sense of Definition \ref{lt}
does not imply piecewise triviality, as the latter notion requires the existence of proper ideals, while the former may be even applied to simple algebras, as the next example shows. 

\begin{ex}\label{nct}(A $\mathbb{Z}/2\mathbb{Z}$-action on the irrational rotation algebra).
Let $\theta\in(0,1)$ be an irrational number. The {\em irrational rotation algebra} $A_\theta$ (or the {\em noncommutative torus}; 
see \cite{EfHa67, PiVo80, Rie81}) is the universal C*-algebra generated by two unitaries $U$ and $V$ subject to the relation
\[
	UV=e^{2\pi i\theta}VU.
\]
This simple C*-algebra plays a fundamental role in noncommutative geometry.
We define an involutive automorphism of $A_\theta$ by mapping 
\[
	U\mapsto -U \text{ and } V\mapsto V.
\]
This gives us an action of $\mathbb{Z}/2\mathbb{Z}$ on $A_\theta$. Note that the subalgebra $C^*(U)$ generated by $U$ is isomorphic to $C(S^1)$ and invariant under the above action, and that the restricted action amounts to the antipodal action on $S^1$, whose local triviality dimension is $1$. Hence, applying  inequality \eqref{loceq} to the equivariant embedding $C^*(U) \subset A_\theta$, we see that $\dim_{\rm LT}^{\mathbb{Z}/2\mathbb{Z}}(A_\theta)\leq 1$.

However, this action cannot be piecewise trivial,
because simplicity of $A_\theta$ would force it to be trivial, which we will show is not possible.

We show that the considered bundle is not trivial and thus $\dim_{\rm LT}^{\mathbb{Z}/2\mathbb{Z}}(A_\theta)=1$. 
Indeed, if this were not the case, we would have a unital $\mathbb{Z}/2\mathbb{Z}$-equivariant \mbox{$*$-homo}\-morphism 
$\varphi \colon 
C(\mathbb{Z}/2\mathbb{Z})\to A_\theta$. Let $p$ and $q$ be the images under $\varphi$ of the two minimal projections generating $C(\mathbb{Z}/2\mathbb{Z})$. Thus $p$ and $q$ are orthogonal projections that add up to $1$ and they are 
translates of each other under the $\mathbb{Z}/2\mathbb{Z}$ action. 
The unique trace $\tau \colon A_\theta \to \mathbb{C}$ given by 
\[
	\tau \left(\sum_{m , n \in \mathbb{Z}} a_{m,n} U^m V^n \right) = a_{0,0}
\]
is clearly invariant under $\mathbb{Z}/2\mathbb{Z}$-action on $A_\theta$. We compute
\[
	\tau(p) = \tau(q) = \frac{\tau(p+q)}{2} = \frac{1}{2} \; .
\]
This is impossible since we know the image of the homomorphism $K_0(A_\theta) \to \mathbb{R}$ induced by $\tau$ is $\mathbb{Z}+\theta\mathbb{Z}$ (this is in fact also injective; see \cite{PiVo80}).

Alternatively, we can prove this fact by noticing that the involution $U\mapsto -U$ and $V \mapsto V$ is homotopic to the identity via the homotopy defined by $U \mapsto e^{\pi i t} U$ and $V \mapsto V$. This implies that $p$ and $q$ induce the same element in $K_0(A_\theta)$ and $2 [p] = [1]$, which contradicts with the fact that $[1]$ is not divisible by $2$ in $K_0(A_\theta)$. 

We have proved that the noncommutative bundle under consideration is not trivial. Thus we conclude that
$\dim_{\rm LT}^{\mathbb{Z}/2\mathbb{Z}}(A_\theta)=1$. \hfill$\diamond$
\end{ex}
\subsection{Rokhlin dimension}\label{rokdimension}
We proceed to the relation of the local-triviality dimension with the Rokhlin dimension. 
Throughout the subsection all C*-algebras are assumed to be separable and groups are assumed to be metrizable
unless otherwise stated.
Let us start by recalling the definitions of a sequence 
algebra and a completely positive contractive order zero map, 
which are the basic ingredients of the definition of the Rokhlin dimension.

\begin{df}
Let $A$ be a separable unital C*-algebra, $\ell^\infty(\mathbb{N},A)$ denote the C*-algebra of all bounded sequences with 
elements in $A$ and $c_0(\mathbb{N},A)$
denote the ideal consisting of sequences converging to zero in norm. 
The \emph{sequence algebra} is defined as the quotient
\[
	A_\infty:=\ell^\infty(\mathbb{N},A)/c_0(\mathbb{N},A).
\]
The \emph{central sequence algebra} is defined as the commutant $A_\infty\cap A'$.
\end{df}

If $G$ is a compact metrizable group acting on a separable unital C*-algebra $A$,
then there are actions of $G$ on both $A_\infty$ and $A_\infty\cap A'$.
The continuity of those actions is a consequence of a result of Brown \cite[Theorem~2]{l-b00}.

\begin{df}[\cite{WZ09}]
Let $A$ and $B$ be C*-algebras. A completely positive contractive map $\varphi:A\to B$ is called {\em order zero} if
and only if
\[
\text{$\varphi(a)\varphi(b)=0$, whenever $ab=0$, for any $a,b\in A$.}
\]
\end{df}

The theory of completely positive contractive order zero maps was developed by Winter and Zacharias 
and has played
a fundamental role in the recent breakthrough in the classification theory of C*-algebras.
The following result, 
based on the Stinespring theorem, will be crucial in exploring the connection of the Rokhlin
dimension with local triviality.

\begin{thm}[\cite{WZ09}]\label{winzach} 
Let $A$ and $B$ be unital C*-algebras.
Any completely positive contractive order zero map $\varphi: A\to B$ induces the 
$*$-ho\-mo\-mor\-phism
\[
\text{$\rho_\varphi:C_0((0,1])\otimes A\to B$ determined by $\rho_\varphi({\rm t}\otimes a):=\varphi(a)$ for all $a\in A$.} 
\]
Conversely, any $*$-homomorphism $\rho:C_0((0,1])\otimes A\to B$ induces the completely positive order zero map 
\[
\text{$\varphi_\rho:A\to B$ given by $\varphi_\rho(a):=\rho({\rm t}\otimes a)$ for all $a\in A$.}
\]
\end{thm}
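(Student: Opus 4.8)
The plan is to treat the two implications separately, disposing of the elementary converse first and then addressing the substantive forward direction via a Stinespring dilation. For the converse, suppose $\rho\colon C_0((0,1])\otimes A\to B$ is a $*$-homomorphism and set $\varphi_\rho(a):=\rho({\rm t}\otimes a)$. I would first observe that the map $A\to C_0((0,1])\otimes A$, $a\mapsto {\rm t}\otimes a$, is completely positive and contractive: it is completely positive because ${\rm t}\geq 0$ forces $({\rm t}\otimes a_{ij})_{ij}={\rm t}\otimes(a_{ij})_{ij}\geq 0$ for every positive matrix $(a_{ij})$ over $A$, and it is contractive because $\|{\rm t}\|_{C_0((0,1])}=1$. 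Since $\rho$ is a $*$-homomorphism, hence completely positive and contractive, the composite $\varphi_\rho$ is completely positive and contractive. For the order zero property, if $ab=0$ then $({\rm t}\otimes a)({\rm t}\otimes b)={\rm t}^2\otimes ab=0$, so multiplicativity of $\rho$ gives $\varphi_\rho(a)\varphi_\rho(b)=\rho({\rm t}^2\otimes ab)=0$. This direction is thus immediate once the complete positivity of $a\mapsto{\rm t}\otimes a$ is noted.

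For the forward direction the plan is to produce a commuting \emph{support element}. Given a completely positive contractive order zero map $\varphi\colon A\to B$, I would invoke the Stinespring dilation to obtain a representation $\pi_0\colon A\to B(K)$ on some Hilbert space $K$ and a contraction $V\colon H\to K$ (with $B\subseteq B(H)$) such that $\varphi(a)=V^*\pi_0(a)V$. The crux is to extract from the order zero hypothesis a positive contraction $h$ and a genuine $*$-homomorphism $\pi\colon A\to B$ (into the multiplier algebra of $C^*(\varphi(A))$) with $h$ commuting with $\pi(A)$ and $\varphi(a)=h\,\pi(a)=\pi(a)\,h$ for all $a$. This is precisely the structure theorem for order zero maps of Winter and Zacharias: the relation $\varphi(a)\varphi(b)=0$ whenever $ab=0$ forces the relevant positive part of the dilation to commute with the dilating representation, so that $\varphi$ factors as multiplication by $h$ composed with a homomorphism.

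Granting this factorization, I would define $\rho_\varphi$ by continuous functional calculus. Since $h$ is a positive contraction and every $f\in C_0((0,1])$ vanishes at $0$, the element $f(h)$ is well defined, and I would set $\rho_\varphi(f\otimes a):=f(h)\,\pi(a)$ on the algebraic tensor product, check that this is a $*$-homomorphism, and extend by continuity to $C_0((0,1])\otimes A$ (using that $*$-homomorphisms are contractive and that $C_0((0,1])$ is nuclear). Multiplicativity on elementary tensors follows from multiplicativity of the functional calculus, from $\pi$ being a homomorphism, and from the commutation of $h$ with $\pi(A)$:
\[
\rho_\varphi(f\otimes a)\,\rho_\varphi(g\otimes b)=f(h)\pi(a)g(h)\pi(b)=(fg)(h)\pi(ab)=\rho_\varphi\big((f\otimes a)(g\otimes b)\big).
\]
Taking $f={\rm t}$, the identity function on $(0,1]$, yields $\rho_\varphi({\rm t}\otimes a)=h\pi(a)=\varphi(a)$, as required. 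Finally, I would note that the two constructions are mutually inverse: starting from $\rho$, applying $\varphi_{\rho}$ and then $\rho_{\varphi_\rho}$ returns $\rho$ on the generating tensors ${\rm t}\otimes a$, and symmetrically.

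The main obstacle is the structure theorem invoked in the second paragraph, namely translating the purely algebraic order zero relation into the commutation of the support element $h$ with the dilating representation $\pi$. Everything else---the complete positivity of $a\mapsto{\rm t}\otimes a$, the functional-calculus definition of $\rho_\varphi$, and the bookkeeping showing the two assignments are inverse to one another---is routine. Since this factorization result is exactly the content of \cite{WZ09}, I would cite it for that step rather than reproduce the full Stinespring-based argument here.
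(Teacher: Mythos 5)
Your proposal is correct and takes essentially the same route as the cited source: the paper itself offers no proof of this statement, attributing it entirely to \cite{WZ09}, and your argument---the elementary converse via complete positivity of $a\mapsto{\rm t}\otimes a$ together with the structure theorem $\varphi(\cdot)=h\,\pi(\cdot)$ with $h$ commuting with $\pi(A)$ for the forward direction---is precisely the standard Winter--Zacharias argument. The one step you rightly flag as the crux (extracting the commuting support element from the order zero relation) is exactly the content of the reference, so citing it there is appropriate.
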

\noindent
The above is also true in the equivariant setting for actions of locally compact group on C*-algebras 
(see \cite[Corollary~2.10]{Ga17}). 
Note that to discuss order zero maps we need not restrict to separable C*-algebras.


Definition \ref{lt} was inspired by the following definition.
\begin{df}[\cite{Ga17}]\label{rd}
Let $G$ be a compact metrizable group and let $\delta:G\to Aut(A)$ be an action of $G$ on a separable unital C*-algebra $A$.
We say that an action $\delta$
has the \emph{Rokhlin dimension} $n$, written $dim_{\rm Rok}(\delta)=n$, if $n$ is the minimal non-negative
integer such that there exist $G$-equivariant completely positive contractive order zero maps 
\[
\varphi_0,\ldots,\varphi_n:C(G)\to A_\infty\cap A'\qquad \text{with}\quad \sum_{i=0}^{n} \varphi_i(1) = 1.
\]
We set $\dim_{\rm Rok}(\delta)=\infty$ if no such $n$ exists.
\end{df}

Using Theorem~\ref{winzach}, one can compare the local-triviality dimension and the Rokhlin dimension.
Note that the local-triviality dimension is defined for actions of compact quantum groups to begin with,
while the generalization of the Rokhlin dimension
to actions of compact quantum groups is not straightforward and requires some reformulations~\cite{GKL17}.

\begin{rem}
Observe the original definition of the Rokhlin dimension
\cite[Definition~3.2]{Ga17} is slightly more general: $G$ is assumed to be compact and second countable,
while $A$ is assumed to be $\sigma$-unital. Furthermore, the (corrected) relative central sequence algebra is used instead of 
$A_\infty\cap A'$. Note however that we do not use the invariant part $A_{\infty,\delta}$ of the sequence algebra due to 
\cite[Theorem~2]{l-b00}.\hfill$\diamond$
\end{rem}

Next, using the notion of equivariant projectivity, we show that, for compact Lie group actions
on unital commutative separable C*-algebras, the notions of the local-triviality dimension and the Rokhlin dimension coincide. 

Let us first state the definition of the equivariant projectivity in the case of compact Hausdorff group actions.
\begin{df}[\cite{Ph12, PST15}]\label{proj}
Let $G$ be a compact Hausdorff group and let $A$ be a $G$-C*-algebra. We say that $A$ is \emph{$G$-equivariantly projective} 
if for any $G$-C*-algebra $B$, a $G$-invariant closed ideal $J\subseteq B$, and an equivariant $*$-homomorphism $\sigma:A\to 
B/J$, there is an
equivariant $*$-homomorphism $\lambda: A\to B$ such that
$\pi\circ\lambda=\sigma$, where $\pi:B\to B/J$ is the quotient map.
\end{df}
Note that the above definition means projectivity in the category of general \mbox{$G$-C*-algebras} and one can
restrict this definition to subcategories of unital $G$-C*-algebras, commutative $G$-C*-algebras, etc.
In the category of commutative C*-algebras, 
an object $C_0(X)$ is $G$-equivariantly projective if and only if
$X$ is a $G$-AR ($G$-equivariant absolute retract \cite{Mu82, Mu83}).

\begin{rem}
The notion of equivariant projectivity of \cite{Ph12,PST15} should be distinguished from 
the notion of equivariant projectivity in the category of projective modules that can be applied to C*-algebras \cite{BH09}.
\hfill$\diamond$
\end{rem}

We state two propositions that establish a relation between the local-triviality dimension and the Rokhlin dimension.

\begin{prop}
Let $A$ be a unital commutative separable C*-algebra equipped with an action $\delta$ of a compact metrizable group $G$.
Then, \[\dim_{\rm Rok}(\delta)\leq \dim_{\rm LT}(\delta).\]
\end{prop}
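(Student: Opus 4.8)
The plan is to unwind the definitions and observe that the commutativity of $A$ collapses the central sequence algebra $A_\infty\cap A'$ to the full sequence algebra $A_\infty$, after which the maps witnessing finite local-triviality dimension can be pushed into $A_\infty$ essentially unchanged. If $\dim_{\rm LT}(\delta)=\infty$ there is nothing to prove, so I assume $\dim_{\rm LT}(\delta)=n<\infty$.

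First I would record that since $A$ is commutative, so is $\ell^\infty(\mathbb{N},A)$, hence its quotient $A_\infty$ is commutative. In particular every element of $A_\infty$ commutes with the canonical copy of $A$ sitting inside $A_\infty$ as constant sequences, so $A_\infty\cap A'=A_\infty$. This is the only place where commutativity is used, and it is precisely what fails for non-commutative $A$ (constant sequences need not be central), which is why the reverse inequality would require genuinely more work.

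Next, by Definition~\ref{lt} applied with $\dim_{\rm LT}(\delta)=n$, I take $G$-equivariant $*$-homomorphisms $\rho_0,\ldots,\rho_n\colon C_0((0,1])\otimes C(G)\to A$ satisfying the joint-unitality condition $\sum_{j=0}^n\rho_j({\rm t}\otimes 1)=1$. Invoking the equivariant form of Theorem~\ref{winzach} (see \cite[Corollary~2.10]{Ga17}), each $\rho_j$ corresponds to a $G$-equivariant completely positive contractive order zero map $\varphi_j\colon C(G)\to A$ determined by $\varphi_j(h)=\rho_j({\rm t}\otimes h)$; in particular $\varphi_j(1)=\rho_j({\rm t}\otimes 1)$, so that $\sum_{j=0}^n\varphi_j(1)=1$.

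Finally I would compose each $\varphi_j$ with the canonical $G$-equivariant unital embedding $\iota\colon A\hookrightarrow A_\infty$ sending an element to its constant sequence. Since composing an order zero map with a $*$-homomorphism again yields an order zero map, and since equivariance and contractivity are preserved, the maps $\iota\circ\varphi_j\colon C(G)\to A_\infty=A_\infty\cap A'$ are $G$-equivariant completely positive contractive order zero maps with $\sum_{j=0}^n(\iota\circ\varphi_j)(1)=\iota(1)=1$. By Definition~\ref{rd} this exhibits $\dim_{\rm Rok}(\delta)\leq n=\dim_{\rm LT}(\delta)$. There is no serious obstacle here: the entire content of the proof is the identification $A_\infty\cap A'=A_\infty$ in the commutative case, which converts the local-triviality data directly into Rokhlin data.
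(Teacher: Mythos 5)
Your proof is correct and follows essentially the same route as the paper's: convert the jointly unital $\mathbb{G}$-equivariant $*$-homomorphisms into order zero maps via Theorem~\ref{winzach}, compose with the unital inclusion $\iota\colon A\to A_\infty$, and use commutativity of $A$ to identify $A_\infty\cap A'$ with $A_\infty$. Nothing is missing.
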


\begin{proof}\label{rokleqlt}
Assume that $\dim_{\rm LT}(\delta)=d$. Due to Theorem \ref{winzach}, we have $G$-equivariant completely positive
contractive order zero maps 
$\varphi_0,\ldots,\varphi_d:C(G)\to A$ such that $\sum_{i=0}^d\varphi_i(1)=1$. 
Using the unital inclusion $\iota:A\to A_{\infty}$ and the fact that $A$ is commutative, we obtain $G$-equivariant completely 
positive contractive order zero maps
\[
\widetilde{\varphi}_i:=\iota\circ\varphi_i: C(G)\to A_{\infty}=A_{\infty}\cap A',\quad i=0,1,2,\ldots,d.
\]
Since $\iota$ is unital, we obtain $\sum_{i=0}^d\widetilde{\varphi}_i(1)=1$. Therefore, 
$\dim_{\rm Rok}(\delta)\leq d=\dim_{\rm LT}(\delta)$.
\end{proof}

\begin{prop}\label{ltleqrok}
Let $G$ be a compact metrizable group such that $C_0((0,1])\otimes C(G)$ is $G$-equivariantly projective
and let $\delta$ be an action of $G$ on a unital separable C*-algebra $A$.
Then, \[\dim_{\rm WLT}(\delta)\leq \dim_{\rm Rok}(\delta).\]
\end{prop}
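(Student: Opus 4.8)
The plan is to prove $\dim_{\rm WLT}(\delta)\leq \dim_{\rm Rok}(\delta)$ by lifting the Rokhlin order zero maps, which live only in the sequence algebra $A_\infty\cap A'$, down to honest maps into $A$ itself, using the equivariant projectivity hypothesis on $C_0((0,1])\otimes C(G)$. Set $n=\dim_{\rm Rok}(\delta)$ (assuming it is finite, else there is nothing to prove). By Definition~\ref{rd} we have $G$-equivariant completely positive contractive order zero maps $\varphi_0,\ldots,\varphi_n\colon C(G)\to A_\infty\cap A'$ with $\sum_{i=0}^n\varphi_i(1)=1$. Applying Theorem~\ref{winzach} in its equivariant form, each $\varphi_i$ corresponds to a $G$-equivariant $*$-homomorphism $\rho_i\colon C_0((0,1])\otimes C(G)\to A_\infty\cap A'$ determined by $\rho_i(\mathrm{t}\otimes h)=\varphi_i(h)$, and the joint-unitality condition becomes $\sum_{i=0}^n\rho_i(\mathrm{t}\otimes 1)=1$ in $A_\infty\cap A'$.

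Next I would realize $A_\infty\cap A'$ as a quotient in order to invoke projectivity. The point is that $A_\infty=\ell^\infty(\mathbb{N},A)/c_0(\mathbb{N},A)$ is a quotient of a $G$-C*-algebra by a $G$-invariant ideal; the relative commutant $A_\infty\cap A'$ is likewise a quotient of a suitable $G$-invariant subalgebra of $\ell^\infty(\mathbb{N},A)$ by the corresponding ideal of null sequences. Since $C_0((0,1])\otimes C(G)$ is assumed $G$-equivariantly projective (Definition~\ref{proj}), each $\rho_i\colon C_0((0,1])\otimes C(G)\to A_\infty\cap A'$ lifts to a $G$-equivariant $*$-homomorphism
\[
\widetilde{\rho}_i\colon C_0((0,1])\otimes C(G)\longrightarrow B,\qquad i=0,1,\ldots,n,
\]
where $B$ is the chosen $G$-invariant subalgebra of $\ell^\infty(\mathbb{N},A)$ mapping onto $A_\infty\cap A'$. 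Composing with evaluation at a single index $k\in\mathbb{N}$ gives $G$-equivariant $*$-homomorphisms $\rho_i^{(k)}\colon C_0((0,1])\otimes C(G)\to A$ whose classes in $A_\infty\cap A'$ recover the $\rho_i$.

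The remaining step is to promote \emph{approximate} joint-invertibility at the sequence-algebra level to \emph{exact} invertibility in $A$ for a single large index. The lifted elements satisfy $\sum_{i=0}^n\rho_i^{(k)}(\mathrm{t}\otimes 1)\to 1$ as $k\to\infty$, because their common image in the quotient $A_\infty\cap A'$ equals $1$; hence for all $k$ sufficiently large the positive element $e_k:=\sum_{i=0}^n\rho_i^{(k)}(\mathrm{t}\otimes 1)$ satisfies $\|e_k-1\|<1$ and is therefore invertible in the unital C*-algebra $A$. Fixing such a $k$ and writing $\rho_i:=\rho_i^{(k)}$, the maps $\rho_0,\ldots,\rho_n$ are $G$-equivariant $*$-homomorphisms $C_0((0,1])\otimes C(G)\to A$ with $\sum_{i=0}^n\rho_i(\mathrm{t}\otimes 1)$ invertible, which is exactly condition~(1) of Definition~\ref{lt}; thus $\dim_{\rm WLT}(\delta)\leq n=\dim_{\rm Rok}(\delta)$.

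The main obstacle I anticipate is the bookkeeping around the relative commutant: equivariant projectivity applies to a quotient map $B\to B/J$, so one must identify $A_\infty\cap A'$ (or at least the image of the $\rho_i$) as such a quotient of a genuine $G$-C*-algebra and verify that the lift lands in something from which one can evaluate coordinatewise into $A$. One clean way to handle this is to use that the $\rho_i$ have separable (indeed, singly generated) domain, so their combined image lies in a separable $G$-invariant subalgebra $C$ of $A_\infty\cap A'$, and to produce $B\subseteq\ell^\infty(\mathbb{N},A)$ as a $G$-invariant lift of $C$ with $B/(B\cap c_0(\mathbb{N},A))\cong C$; projectivity then yields the lift to $B$, after which evaluation at an index is immediate. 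Everything else is the standard $\|e_k-1\|<1$ argument, which requires no new input.
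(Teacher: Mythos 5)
Your proposal is correct and follows essentially the same route as the paper's proof: convert the Rokhlin order zero maps into equivariant $*$-homomorphisms via the equivariant version of Theorem~\ref{winzach}, lift them using the equivariant projectivity of $C_0((0,1])\otimes C(G)$, evaluate coordinatewise, and conclude invertibility of $\sum_i\rho_i^{(k)}({\rm t}\otimes 1)$ for large $k$ from $\|e_k-1\|<1$. The one point worth noting is that the ``main obstacle'' you anticipate --- realizing $A_\infty\cap A'$ as a quotient of a genuine $G$-C*-algebra --- is sidestepped entirely in the paper: it composes each $\rho_i$ with the inclusion $A_\infty\cap A'\hookrightarrow A_\infty$ and lifts along the quotient map $\pi\colon\ell^\infty(\mathbb{N},A)\to A_\infty$ directly, which suffices because the weak local-triviality dimension needs only invertibility of the summed element in $A$, not any commutation property, so your separable-subalgebra workaround (though it does work, e.g.\ with $B=\pi^{-1}(C)$) is unnecessary.
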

\begin{proof}
Suppose that $\dim_{\rm Rok}(\delta)=d$ and we have equivariant completely 
positive contractive order zero maps
\[
	\varphi_i:C(G)\to A_{\infty}\cap A'\quad \text{for}\quad i=0,1,\ldots,d.
\] 
Using Theorem \ref{winzach}, we obtain equivariant $*$-homomorphisms 
\[
	\rho_i:C_0((0,1])\otimes C(G)\to A_{\infty}\cap A'\hookrightarrow A_{\infty}\,.
\]

Since $C_0((0,1])\otimes C(G)$ is $G$-equivariantly projective,
for each $\rho_i$, there exists a~$G$-equivariant $*$-homomorphism
\[
	\lambda_i: C_0((0,1])\otimes C(G)\to \ell^\infty(\mathbb{N},A)
\]
such that $\pi\circ\lambda_i=\rho_i$, where $\pi:\ell^\infty(\mathbb{N},A)\to A_\infty$ is the quotient map.
Define $pr_n:\ell^\infty(\mathbb{N},A)\to A$ as a projection on the $n$th element of the sequence for some $n\in\mathbb{N}$.
Then, the maps 
\[
	\widetilde{\rho}_{n,i}:=pr_n\circ\lambda_i:C_0((0,1])\otimes C(G)\to A,\quad i=0,1,2,\ldots,d,
\] 
define $G$-equivariant $*$-homomorphisms.

From the fact that $\sum_i\rho_i({\rm t}\otimes 1)=1$
and that $\rho_i=\pi\circ\lambda_i$, we obtain
\[
	\left\|pr_n\left(\sum_{i=0}^d\lambda_i({\rm t}\otimes 1)\right)-1\right\|\to 0\quad\rm{as}\quad n\to\infty.
\]
Hence for large enough $N$, we can conclude that
\[
	\sum_{i=0}^d\widetilde{\rho}_{N,i}({\rm t}\otimes 1)=pr_N \left( \sum_{i=0}^d \lambda_i({\rm t} \otimes 1) \right)
\] 
is invertible. 
\end{proof}

If $G$ is a compact Lie group, then the space $(0,1]\times G$ is a $G$-AR (see \cite[Corollary~2.3]{An02}). 
Hence, $C_0((0,1])\otimes C(G)$ is $G$-equivariantly projective in the category of commutative
C*-algebras.
In the general possibly noncommutative case, it is known that $C_0((0,1])\otimes C(\mathbb{Z}/2^n\mathbb{Z})$
is equivariantly projective for any $n\in\mathbb{N}\setminus\{0\}$ \cite[Proposition~2.10]{PST15}.

Combining Propositions~\ref{rokleqlt} and~\ref{ltleqrok}, and taking advantage of the above remark about Lie groups, 
we arrive at
\begin{thm}\label{roktriv}
Let $A$ be a unital commutative separable C*-algebra equipped with an action $\delta$ of a compact Lie group $G$.
Then, \[\dim_{\rm LT}(\delta) = \dim_{\rm Rok}(\delta).\]
\end{thm}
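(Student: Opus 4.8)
The plan is to sandwich $\dim_{\rm Rok}(\delta)$ between $\dim_{\rm WLT}(\delta)$ and $\dim_{\rm LT}(\delta)$, and then to collapse the sandwich using the fact that the weak and non-weak local-triviality dimensions agree for commutative C*-algebras. Concretely, I would first invoke the preceding Proposition~\ref{rokleqlt}, whose hypotheses (that $A$ be a unital commutative separable C*-algebra and $G$ a compact metrizable group) are met here, since every compact Lie group is metrizable; this immediately yields $\dim_{\rm Rok}(\delta)\leq\dim_{\rm LT}(\delta)$.

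For the reverse direction I would appeal to Proposition~\ref{ltleqrok}, which gives $\dim_{\rm WLT}(\delta)\leq\dim_{\rm Rok}(\delta)$ provided $C_0((0,1])\otimes C(G)$ is $G$-equivariantly projective. The remark recorded just above supplies exactly this input: since $G$ is a compact Lie group, $(0,1]\times G$ is a $G$-AR by \cite[Corollary~2.3]{An02}, so $C_0((0,1])\otimes C(G)$ is equivariantly projective in the category of commutative C*-algebras. The point worth checking is that this \emph{commutative-category} projectivity is all that Proposition~\ref{ltleqrok} actually consumes when $A$ is commutative: its proof lifts the maps $\rho_i\colon C_0((0,1])\otimes C(G)\to A_\infty$ along the quotient $\pi\colon\ell^\infty(\mathbb{N},A)\to A_\infty$, and for commutative $A$ both $\ell^\infty(\mathbb{N},A)$ and $A_\infty$ are commutative, so the required lift lives entirely inside the commutative subcategory. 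Hence the hypothesis is verified in our setting and we obtain $\dim_{\rm WLT}(\delta)\leq\dim_{\rm Rok}(\delta)$.

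Finally, I would close the chain using the coincidence of the weak and ordinary local-triviality dimensions for commutative C*-algebras established in Section~\ref{classical}, namely $\dim_{\rm WLT}(\delta)=\dim_{\rm LT}(\delta)$ when $A=C(X)$. Stringing the three facts together gives
\[
\dim_{\rm LT}(\delta)=\dim_{\rm WLT}(\delta)\leq\dim_{\rm Rok}(\delta)\leq\dim_{\rm LT}(\delta),
\]
so all the inequalities are forced to be equalities and in particular $\dim_{\rm LT}(\delta)=\dim_{\rm Rok}(\delta)$.

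I expect the only genuinely delicate step to be the second one: not the invocation of Proposition~\ref{ltleqrok} itself, but the verification that the equivariant projectivity available for compact Lie groups — which the cited $G$-AR result provides only within the commutative category — really does meet the projectivity hypothesis as it is used there. Everything else is a bookkeeping assembly of results proved earlier in the paper, with the one subtlety being that the closing equality genuinely requires the commutative coincidence $\dim_{\rm WLT}=\dim_{\rm LT}$, and not merely the general inequality \eqref{wltlesslt}.
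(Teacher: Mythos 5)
Your proposal is correct and follows essentially the same route as the paper, which likewise sandwiches $\dim_{\rm Rok}(\delta)$ via Propositions~\ref{rokleqlt} and~\ref{ltleqrok} together with the Lie-group $G$-AR remark and the commutative coincidence $\dim_{\rm WLT}=\dim_{\rm LT}$ from Section~\ref{classical}. Your explicit check that the commutative-category projectivity suffices in Proposition~\ref{ltleqrok} --- because $\ell^\infty(\mathbb{N},A)$ and $A_\infty$ are themselves commutative when $A$ is --- is a genuine subtlety that the paper passes over in silence, and it is worth making explicit.
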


Theorem~\ref{roktriv} suggests that the Rokhlin dimension can be viewed as another noncommutative generalization of local triviality of compact
principal $G$-bundles, where $G$ is a compact Lie group. However, when we stay away from Lie groups, these notions differ 
even in the classical case as shown in the Theorem \ref{dimdrop} below.

First, we need to introduce a new characterization of the Rokhlin dimension for actions of compact metrizable
groups on unital separable C*-algebras. 
	To this end, let us recall that every compact metrizable group $G$ contains a decreasing sequence of normal subgroups
	\[
	G = N_1 \supset N_2 \supset \ldots \text{ with }\bigcap\{N_i:i=1,2,\ldots\}=\{e\} \; ,
	\]
	such that, for every $i$,
	$H_i:=G/N_i$ is a compact Lie group. Thus $G=\varprojlim H_i$. (See for example \cite[Theorem~53]{Pon46}).
	
	Let $\delta$ denote the action of a compact metrizable group $G$ 
	on a unital C*-algebra $A$. For any normal subgroup $N_i\subseteq G$ as
	above, we can define the action $\delta^i$ of $H_i$ on $A^{N_i}$ by the formula
	\[
	\delta^i_{[g]_i}(a):=\delta_g(\iota_i(a)),\qquad g\in G,\quad [g]_i\in H_i, \quad a\in A^{N_i}.
	\]
	Here $\iota_i$ is the inclusion of $A^{N_i}$ into $A$.

\begin{thm}\label{roknew}
Let $A$ be a unital separable C*-algebra equipped with an action $\delta$ of a compact metrizable group 
$G=\varprojlim H_i=\varprojlim (G/N_i)$ and let $\delta^i$ denote the action of $H_i$ on $A^{N_i}$.
Then,
\[
	\dim_{\rm Rok}(\delta)=\sup_{i}\left\{\dim_{\rm Rok}(\delta^i)\right\}\;.
\]
\end{thm}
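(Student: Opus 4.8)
The plan is to establish the two inequalities $\dim_{\rm Rok}(\delta)\ge\sup_i\dim_{\rm Rok}(\delta^i)$ and $\dim_{\rm Rok}(\delta)\le\sup_i\dim_{\rm Rok}(\delta^i)$ separately, the second being the substantial one. Throughout I write $q_i\colon G\to H_i=G/N_i$ for the quotient, $q_i^*\colon C(H_i)\to C(G)$ for the induced unital embedding onto the $N_i$-invariant functions, and $P_i=\int_{N_i}\delta_h\,dh$ for the averaging projection of $A$ onto $A^{N_i}$. The $G$-translation action restricted to $q_i^*\,C(H_i)$ factors through $H_i$, so $q_i^*$ is equivariant for the $H_i$-translation action.

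For $\ge$, I would pull back a $G$-Rokhlin system. If $\dim_{\rm Rok}(\delta)=d$ is witnessed by $G$-equivariant cpc order zero maps $\varphi_0,\dots,\varphi_d\colon C(G)\to A_\infty\cap A'$ with $\sum_j\varphi_j(1)=1$, then each composite $\varphi_j\circ q_i^*\colon C(H_i)\to A_\infty\cap A'$ is again cpc order zero and is $H_i$-equivariant, since the $G$-action on $q_i^*\,C(H_i)$ factors through $H_i$. Moreover the image of $\varphi_j\circ q_i^*$ consists of elements of $A_\infty\cap A'$ that are \emph{exactly} fixed by $N_i$ (as $q_i^*\,C(H_i)$ is $N_i$-invariant and $\varphi_j$ is equivariant). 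Such an element, represented by a bounded sequence $(a_k)$ in $A$, is equally represented by its coordinatewise average $(P_i a_k)$ with $P_i a_k\in A^{N_i}$: indeed $\|P_i a_k-a_k\|\to0$ by dominated convergence, using that $\|\delta_h(a_k)-a_k\|\to0$ for each $h\in N_i$. Hence $\varphi_j\circ q_i^*$ takes values in $(A^{N_i})_\infty$, and since these values commute with all of $A\supseteq A^{N_i}$ they lie in $(A^{N_i})_\infty\cap(A^{N_i})'$. As $\sum_j(\varphi_j\circ q_i^*)(1)=1$, this witnesses $\dim_{\rm Rok}(\delta^i)\le d$ for every $i$.

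For $\le$ I may assume $n:=\sup_i\dim_{\rm Rok}(\delta^i)<\infty$. The two facts I would exploit are that $\bigcup_i A^{N_i}$ is dense in $A$ and $\bigcup_i q_i^*\,C(H_i)$ is dense in $C(G)$, both because $P_i\to\mathrm{id}$ as $N_i\downarrow\{e\}$ by continuity of the action. For each $i$ fix $H_i$-equivariant cpc order zero maps $\psi_0^{(i)},\dots,\psi_n^{(i)}\colon C(H_i)\to(A^{N_i})_\infty\cap(A^{N_i})'$ with $\sum_j\psi_j^{(i)}(1)=1$. By Theorem~\ref{winzach} each $\psi_j^{(i)}$ is a $*$-homomorphism out of the cone $C_0((0,1])\otimes C(H_i)$, which I lift to a sequence of cpc maps $\psi^{(i)}_{j,k}\colon C(H_i)\to A^{N_i}$ that are order zero up to an error vanishing as $k\to\infty$. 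Precomposing with the $G$-equivariant conditional expectation $E_i\colon C(G)\to q_i^*\,C(H_i)\cong C(H_i)$ (averaging over $N_i$) yields cpc maps $C(G)\to A^{N_i}\subseteq A$. A diagonal reindexing over the pair $(i,k)$ — choosing $i$ large so that prescribed finite subsets of $A$ and $C(G)$ are well approximated inside $A^{N_i}$ and $q_i^*\,C(H_i)$, then $k$ large — assembles these into $G$-equivariant cpc order zero maps $\varphi_0,\dots,\varphi_n\colon C(G)\to A_\infty\cap A'$ with $\sum_j\varphi_j(1)=1$, so $\dim_{\rm Rok}(\delta)\le n$. Separability of $A$ and metrizability of $G$ supply the countable bookkeeping the diagonalization needs.

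The main obstacle is the control required in this last step, and it has two parts. First, the maps $\psi_j^{(i)}$ are central only relative to $A^{N_i}$, whereas the target demands centrality relative to all of $A$; this is resolved precisely because $\bigcup_i A^{N_i}$ is dense, so commuting with $A^{N_i}$ for larger and larger $i$ forces commuting with $A$ in the limit. Second, and more delicate, I must achieve \emph{approximate} $H_i$-equivariance that is \emph{uniform} over the group element while keeping the order-zero relation; one cannot simply average the lifts $\psi^{(i)}_{j,k}$ over $H_i$, as averaging destroys order zero. The remedy is that the induced $H_i$-action on the central sequence algebra is norm-continuous by Brown's theorem \cite{l-b00} and $H_i$ is a compact Lie group, so the equality of the two continuous maps $\bar s\mapsto\psi_j^{(i)}(\bar s\cdot\bar f)$ and $\bar s\mapsto\delta^i_{\bar s}(\psi_j^{(i)}(\bar f))$ upgrades, by uniform continuity and compactness, to approximate equality of the lifts uniformly in $\bar s$ for $k$ large. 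Reconciling these two requirements at a single reindexing value is the technical heart of the argument.
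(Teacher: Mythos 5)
Your proposal is correct in substance and it splits into the same two inequalities as the paper, but the halves compare differently. The inequality $\sup_i\dim_{\rm Rok}(\delta^i)\le\dim_{\rm Rok}(\delta)$ is argued exactly as in the paper: compose a Rokhlin system for $\delta$ with $q_i^*\colon C(H_i)\to C(G)$, observe that the image is $N_i$-fixed, and identify $N_i$-fixed central sequences with central sequences in $A^{N_i}$ by coordinatewise averaging --- the paper packages your dominated-convergence computation as a $*$-homomorphism $S\colon (A_\infty)^{N_i}\to (A^{N_i})_\infty$, but it is the same argument. For the reverse inequality the paper simply writes $A=\varinjlim A^{N_i}$ and cites the direct-limit permanence of the Rokhlin dimension \cite[Theorem~3.8(4)]{Ga17}, whereas you reprove that permanence by hand with a lift-and-diagonalize argument. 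Your route is more self-contained and makes explicit that the group changes along the limit (the building blocks carry $H_i$-actions rather than $G$-actions, and the $G$-Rokhlin dimension of an action factoring through a proper quotient is generically infinite), something a fixed-group direct-limit statement does not literally address; the price is the bookkeeping you acknowledge. Two points of your sketch should be made explicit. First, the conditional expectation $E_i$ is not multiplicative, so $\psi_j^{(i)}\circ E_i$ is \emph{not} order zero for any fixed $i$; the order-zero property of the limit maps must be extracted from the quantitative estimate $\|\phi(a)\phi(b)\|\le\|ab\|$, valid for any completely positive contractive order zero $\phi$ by the structure theorem of \cite{WZ09}, combined with $\|E_i(f)E_i(g)\|\to\|fg\|=0$ as $i\to\infty$. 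Second, exact equivariance of the limit maps already follows from approximate equivariance at countably many pairs $(s,f)$ together with continuity of the induced $G$-action on $A_\infty$ (Brown's theorem \cite{l-b00}), so the uniform-in-$s$ control you describe, while available, is not strictly needed. With these points filled in, your argument is a complete proof.
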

\begin{proof}
First note that $A=\varinjlim A^{N_i}$ and there is an action of $G$ on each $A^{N_i}$
defined by the quotient map $G\to H_i$.
Then, using \cite[Theorem~3.8 (4)]{Ga17}, we obtain
\[
	\dim_{\rm Rok}(\delta)\leq \liminf_i\left\{\dim_{\rm Rok}(\delta^i)\right\}\leq\sup_i\left\{\dim_{\rm Rok}(\delta^i)\right\}.
\] 

Next, suppose that $\dim_{\rm Rok}(\alpha)=d$, namely that there are jointly-unital 
\mbox{$G$-equi}\-variant completely positive contractive order zero maps
\[
	\varphi_j:C(G)\longrightarrow A_\infty\cap A',\qquad j=0,1,\ldots, d.
\] 
For every $i$, there is a $G$-equivariant $*$-homomorphism
\[
	C(H_i)\longrightarrow C(G),
\]
where the action of $G$ on $H_i$ is again defined by the quotient map $G\to G/N_i$.
Composing the two maps above, for any $i$\,, we get jointly unital
$G$-equivariant completely positive contractive order zero maps
\[
	\psi^i_j:C(H_i)\longrightarrow A_\infty\cap A',\qquad j=0,1,\ldots,d.
\]
Note that, as a $G$-space, $H_i$ is $N_i$-invariant so that the image of the above map is contained in the fixed-point 
subalgebra under the $N_i$-action. Therefore, for all $i$, we obtain jointly-unital $H_i$-equivariant 
completely positive contractive order zero maps
\[
	\psi^i_j:C(H_i)\longrightarrow \left(A_\infty\cap A'\right)^{N_i},\qquad j=0,1,\ldots,d.
\]
Observe that there is a $*$-homomorphism 
\[
	S:\left(A_{\infty}\right)^{N_i} \to \left(A^{N_i}\right)_{\infty}
\]
given by averaging each component over $N_i$\,, i.e.,
\[
	S:[(a_i)_i] \mapsto \left[ \left( \int_{N_i} \alpha_n(a_i) \, d\mu(n) \right)_i\, \right] \; ,
\]
where $\mu$ denotes the normalized Haar measure on the compact group $N_i$. 
Moreover, it is straightforward to check that
\[
	S\left(\left(A_\infty\cap A'\right)^{N_i}\right)\subseteq (A^{N_i})_\infty\cap (A^{N_i})'.
\]
Hence, for every $i$, 
we obtain jointly unital $H_i$-equivariant 
completely positive contractive order zero maps
\[
	\psi^i_j:C(H_i)\longrightarrow \left(A^{N_i}\right)_\infty\cap \left(A^{N_i}\right)',\qquad j=0,1,\ldots,d.
\]
This implies that, for any $i$,
\[
	\dim_{\rm Rok}(\delta^i)\leq d=\dim_{\rm Rok}(\delta).
\]
Subsequently,
\[
	\sup_i\left\{\dim_{\rm Rok}(\delta^i)\right\}\leq \dim_{\rm Rok}(\delta).
\]
\end{proof}
Intuitively speaking, the Rokhlin dimension of an action does not see the small subgroups $N_i$. 
Let $G=\varprojlim G/N_i=\varprojlim H_i$ be a compact metrizable group and let $X$ be a compact metrizable $G$-space.
For the purposes of the next corollary, Theorem~\ref{rokupbd}, and Theorem~\ref{dimdrop}, 
we introduce the notion of the {\em homotopy quotient} $X{\times}_GH_i$\,.
Let us assume that the $G$-action on $X$ is on the right and we define a left action of $G$ on $H_i$ using the quotient map
as before. Then, there is a diagonal $G$ action on $X\times H_i$ and 
\[
X{\times}_GH_i:=(X\times H_i)/G.
\]
We equip $X{\times}_GH_i$ with a right $H_i$-action. Note that there is a isomorphism of $H_i$-spaces
\[
	X\underset{G}{\times}H_i\cong X/N_i\,.
\]
Note however that the homotopy quotient construction is much more functorial than the orbit space construction.

\begin{cor}\label{roknewab}
Let $G=\varprojlim H_i$ be a compact metrizable group and let $X$ be a compact metrizable $G$-space.
Let $\delta$ denote the induced $C(G)$-coaction on $C(X)$.
The following equality holds:
\[
	\dim_{\rm Rok}(\delta)=\sup_{i}\left\{{\rm ind}_{H_i}\left(X\underset{G}{\times}H_i\right)\right\}.
\]
\end{cor}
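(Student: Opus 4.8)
The plan is to reduce the statement to three results already established in the excerpt: the reduction of the Rokhlin dimension to the Lie-group quotients (Theorem~\ref{roknew}), the coincidence of $\dim_{\rm Rok}$ and $\dim_{\rm LT}$ for commutative separable algebras under compact Lie group actions (Theorem~\ref{roktriv}), and the identification of $\dim_{\rm LT}$ with the $G$-index in the classical commutative setting (the comparison theorem of Section~\ref{classical}). First I would apply Theorem~\ref{roknew} to the algebra $A=C(X)$, viewing the coaction $\delta$ as the equivalent automorphic action of the compact group $G$ on $C(X)$. This immediately gives
\[
\dim_{\rm Rok}(\delta)=\sup_i\dim_{\rm Rok}(\delta^i),
\]
where, as in the setup of Theorem~\ref{roknew}, $\delta^i$ denotes the action of the compact Lie group $H_i=G/N_i$ on the fixed-point subalgebra $C(X)^{N_i}$.

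Next I would identify each term on the right. Since $X$ is compact metrizable and $N_i\leq G$ is a compact subgroup, the orbit space $X/N_i$ is again compact metrizable, so that $C(X/N_i)$ is a unital commutative separable C*-algebra. Using the isomorphism $C(X)^{N_i}\cong C(X/N_i)$ for commutative C*-algebras recorded in Section~\ref{prem}, the action $\delta^i$ corresponds under this isomorphism to the natural $H_i$-action on $X/N_i$ induced by the quotient map $G\to H_i$; this amounts to checking that the defining formula $\delta^i_{[g]_i}(a)=\delta_g(\iota_i(a))$ matches translation on $X/N_i$. Because $H_i$ is a compact Lie group, Theorem~\ref{roktriv} then applies and yields $\dim_{\rm Rok}(\delta^i)=\dim_{\rm LT}(\delta^i)$.

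Then I would invoke the comparison theorem of Section~\ref{classical}, which equates the local-triviality dimension of a classical action with the $G$-index of the underlying space, giving $\dim_{\rm LT}(\delta^i)={\rm ind}_{H_i}(X/N_i)$. Finally, the $H_i$-equivariant homeomorphism $X\underset{G}{\times}H_i\cong X/N_i$ recorded just above the corollary shows that ${\rm ind}_{H_i}(X/N_i)={\rm ind}_{H_i}\bigl(X\underset{G}{\times}H_i\bigr)$, since the $G$-index is invariant under equivariant homeomorphism. Chaining these equalities produces
\[
\dim_{\rm Rok}(\delta)=\sup_i\dim_{\rm Rok}(\delta^i)=\sup_i{\rm ind}_{H_i}\left(X\underset{G}{\times}H_i\right),
\]
as desired.

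The individual steps are routine, so there is no deep obstacle; the one point demanding care is the second paragraph, namely verifying that under $C(X)^{N_i}\cong C(X/N_i)$ the abstractly defined residual action $\delta^i$ of $H_i$ genuinely coincides with the geometric translation action on $X/N_i$, so that the commutative-algebra results of Sections~\ref{prem} and~\ref{classical} may be applied verbatim. I would also confirm at the outset the separability and metrizability hypotheses, i.e. that each $X/N_i$ is compact metrizable, as these are required for Theorem~\ref{roktriv}.
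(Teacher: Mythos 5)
Your proof is correct and follows exactly the chain the paper intends: Theorem~\ref{roknew} to reduce to the Lie quotients $H_i$ acting on $C(X)^{N_i}\cong C(X/N_i)$, Theorem~\ref{roktriv} to convert the Rokhlin dimension to the local-triviality dimension, the comparison theorem of Section~\ref{classical} to convert that to ${\rm ind}_{H_i}(X/N_i)$, and the $H_i$-equivariant identification $X\underset{G}{\times}H_i\cong X/N_i$ recorded just before the corollary. The paper leaves this chain implicit (no proof is printed), and your write-up, including the care taken to match $\delta^i$ with the translation action on $X/N_i$ and to verify the separability hypotheses, fills it in correctly.
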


The following result gives a nice upper bound for the Rokhlin dimension in the commutative case.

\begin{thm}\label{rokupbd}
Let $X$ be a compact metrizable space equipped with an action $\delta$ of a compact metrizable group $G$ and let $\dim(X/G)<\infty$.
Then, $\dim_{\rm Rok}(\delta)<\infty$ if and only if $\delta$ is free. In fact, we have $\dim_{\rm Rok}(\delta) \leq \dim(X/G)$. 
\end{thm}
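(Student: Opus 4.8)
The plan is to reduce the whole statement to index computations for the compact Lie quotients $H_i=G/N_i$ and then feed in classical facts about principal bundles. Writing $\delta$ for the induced coaction on $C(X)$, Corollary~\ref{roknewab} gives
\[
\dim_{\rm Rok}(\delta)=\sup_{i}\left\{{\rm ind}_{H_i}\!\left(X\underset{G}{\times}H_i\right)\right\},
\]
and since $X\underset{G}{\times}H_i\cong X/N_i$ as $H_i$-spaces, everything hinges on understanding ${\rm ind}_{H_i}(X/N_i)$ uniformly in $i$. Each $X/N_i$ is again compact metrizable, and its $H_i$-orbit space is canonically homeomorphic to $X/G$; this last identification is precisely what will yield a bound independent of $i$.

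First I would set up a freeness dictionary. The coaction $\delta$ is free in the sense of Definition~\ref{freeaction} exactly when the $G$-action on $X$ is free, and I would show that this is equivalent to the $H_i$-action on $X/N_i$ being free for every $i$. Indeed, if the $G$-action is free and $[x]\cdot[g]=[x]$ in $X/N_i$, then $x\cdot(gn^{-1})=x$ for some $n\in N_i$, forcing $g\in N_i$, i.e. $[g]=e$; conversely, if $x\cdot g=x$ with $g\neq e$, then since $\bigcap_i N_i=\{e\}$ there is an index $i$ with $g\notin N_i$, and $[x]\cdot[g]=[x]$ with $[g]\neq e$ contradicts freeness of the $H_i$-action.

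For the forward implication together with the quantitative bound, assume $\delta$ is free, so every $H_i$-action on $X/N_i$ is free. As $H_i$ is a compact Lie group and $X/N_i$ is compact metrizable, hence regular, Mostow's theorem \cite{mst57} makes $X/N_i\to (X/N_i)/H_i$ a locally trivial principal $H_i$-bundle. Then Proposition~\ref{schind} together with the Schwarz-genus estimate \eqref{schdim} gives
\[
{\rm ind}_{H_i}(X/N_i)={\rm g}_{H_i}(X/N_i)\leq \dim\!\big((X/N_i)/H_i\big)=\dim(X/G),
\]
and taking the supremum over $i$ yields $\dim_{\rm Rok}(\delta)\leq\dim(X/G)<\infty$. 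For the converse, if $\dim_{\rm Rok}(\delta)<\infty$ then each ${\rm ind}_{H_i}(X/N_i)$ is finite, so for every $i$ there is an $H_i$-equivariant map $X/N_i\to E_n H_i$ into the free $H_i$-space $E_n H_i$; an equivariant map into a free space forces the source to be free, so each $H_i$-action on $X/N_i$ is free, and by the dictionary above $\delta$ is free.

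The step requiring the most care is the uniformity in the forward direction: although both the groups $H_i$ and the total spaces $X/N_i$ vary with $i$, they all sit over the \emph{same} base $X/G$, and it is the canonical homeomorphism $(X/N_i)/H_i\cong X/G$ that lets the single number $\dim(X/G)$ bound every ${\rm ind}_{H_i}(X/N_i)$ simultaneously. I therefore expect the main points to verify to be this identification of orbit spaces and the inheritance of regularity by $X/N_i$ needed to apply Mostow's theorem; once these are in place, Corollary~\ref{roknewab} does the rest.
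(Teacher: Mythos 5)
Your proof is correct and the quantitative core is the same as the paper's: express $\dim_{\rm Rok}(\delta)$ as $\sup_i {\rm ind}_{H_i}(X\times_G H_i)$ via Corollary~\ref{roknewab}, note that all the orbit spaces $(X\times_G H_i)/H_i$ are the same space $X/G$, and apply Mostow's theorem together with Proposition~\ref{schind} and inequality~\eqref{schdim} to each Lie quotient. The one place you genuinely diverge is the implication ``$\dim_{\rm Rok}(\delta)<\infty \Rightarrow \delta$ free'': the paper simply cites \cite[Theorem~4.1(1)]{Ga17}, whereas you derive it from the index characterization (a finite index gives an $H_i$-equivariant map into the free space $E_nH_i$, forcing each $H_i$-action on $X/N_i$ to be free) combined with your explicit ``freeness dictionary'' between the $G$-action on $X$ and the $H_i$-actions on $X/N_i$, which uses $\bigcap_i N_i=\{e\}$. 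That variant is sound and has the advantage of being self-contained within the commutative setting; the paper's citation buys brevity and covers the noncommutative case of the same implication. Both arguments rely on the same key identification $(X/N_i)/H_i\cong X/G$ to get a bound uniform in $i$, which you correctly flag as the crux.
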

\begin{proof}
We already know that $\dim_{\rm Rok}(\delta)<\infty$ implies freeness \cite[Theorem~4.1(1)]{Ga17}. 
Conversely, assume that the action of $G$ on $X$ is free.
Then, for any $H_i$, the action of $H_i$ on $X{\times}_GH_i$ is free as well. By~Mostow's theorem
\cite[Theorem~3.1]{mst57}, we know that
any free action of a compact Lie group is locally trivial, so we have ${\rm ind}_{H_i}(X{\times}_GH_i)<\infty$
for any $H_i$. By Proposition~\ref{schind} and the inequality~(\ref{schdim}), we have ${\rm ind}_{H_i}(X{\times}_GH_i) \leq \dim((X{\times}_GH_i) / H_i)$, but the latter space is homeomorphic to $X/G$ for any $H_i$. 
Therefore, Theorem~\ref{roknew} implies that $\dim_{\rm Rok}(\alpha) \leq \dim(X/G) < \infty$.  
\end{proof}





Next, we present the following striking dimension reduction phenomenon. 

\begin{thm}\label{dimdrop}
	For any compact metrizable space $X$ equipped with a continuous action by the $p$-adic group 
	$\mathbb{Z}_p$. Let $\delta$ denote the induced $C(G)$-coaction on $C(X)$.
Then, if ${\dim}_{\rm Rok}(\delta) < \infty$, then ${\dim}_{\rm Rok}(\delta) \leq 3$. 
\end{thm}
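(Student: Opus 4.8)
The plan is to exploit the inverse-limit structure $\mathbb{Z}_p=\varprojlim_k\mathbb{Z}/p^k$ to reduce the statement to a \emph{uniform} bound on the $G$-indices of the finite quotient actions, and then to extract that bound from a mod-$p$ cohomological computation combined with the dimension-raising behaviour of free $p$-adic transformation groups. First I would record that $\dim_{\rm Rok}(\delta)<\infty$ forces $\delta$ to be free, by \cite[Theorem~4.1(1)]{Ga17} (the same input used in Theorem~\ref{rokupbd}). Writing $N_k=p^k\mathbb{Z}_p$ and $H_k=\mathbb{Z}/p^k$, Corollary~\ref{roknewab} then gives
\[
\dim_{\rm Rok}(\delta)=\sup_k\,{\rm ind}_{\mathbb{Z}/p^k}\!\left(Y_k\right),\qquad Y_k:=X\underset{\mathbb{Z}_p}{\times}H_k\cong X/N_k .
\]
Since $\delta$ is free, each $Y_k$ is a free $\mathbb{Z}/p^k$-space and they all share the common orbit space $B:=X/\mathbb{Z}_p$, so $Y_k\to B$ is a principal $\mathbb{Z}/p^k$-bundle with a classifying map $c_k\colon B\to B\mathbb{Z}/p^k$. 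The residual maps $Y_{k+1}\to Y_k$ are principal $\mathbb{Z}/p$-bundles, equivariant for the canonical surjections $\pi_k\colon\mathbb{Z}/p^{k+1}\to\mathbb{Z}/p^k$, whence $c_k\simeq B\pi_k\circ c_{k+1}$.

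The cohomological core is to compute the transition maps $(B\pi_k)^*$ on mod-$p$ cohomology. They are isomorphisms in degree $1$, while in degree $2$ the generator pulls back to $p$ times a generator, as one reads off from the functorial identification $H^2(B\mathbb{Z}/m;\mathbb{Z})\cong{\rm Hom}(\mathbb{Z}/m,\mathbb{Q}/\mathbb{Z})$; reducing mod $p$ this becomes the zero map. As $H^*(B\mathbb{Z}/p^k;\mathbb{Z}/p)$ is generated as a ring by its classes in degrees $1$ and $2$, it follows that $(B\pi_k)^*$ annihilates everything of degree $\ge 2$; equivalently $H^n(B\mathbb{Z}_p;\mathbb{Z}/p)=\varinjlim_k H^n(B\mathbb{Z}/p^k;\mathbb{Z}/p)=0$ for $n\ge2$, reflecting that $\mathbb{Z}_p$ has mod-$p$ cohomological dimension $1$. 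Consequently $c_k^*=c_{k+1}^*\circ(B\pi_k)^*$ vanishes on all classes of degree $\ge2$, so the \emph{cohomological} index of each bundle $Y_k\to B$ is at most $1$, uniformly in $k$. This computation works uniformly over all primes $p$, including $p=2$.

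The final, and hardest, step is to convert this cohomological bound into a bound of $3$ on the genuine $G$-index ${\rm ind}_{\mathbb{Z}/p^k}(Y_k)$. One cannot simply invoke ${\rm ind}={\rm g}\le\dim(X/\mathbb{Z}_p)$ from Proposition~\ref{schind} and \eqref{schdim}, because $\dim(X/\mathbb{Z}_p)$ may be large: the whole point (the ``dimension drop'') is precisely that the geometric index stays far below the covering dimension of $X/\mathbb{Z}_p$. The mechanism I would use is the dimension-raising theory of free $p$-adic actions due to Yang and Bredon, by which passing to the orbit space of a free $\mathbb{Z}_p$-action raises cohomological dimension by exactly $2$. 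Combining this with the vanishing established above, which pins the index-detecting mod-$p$ cohomology to degrees $\le 1$, one forces ${\rm ind}_{\mathbb{Z}/p^k}(Y_k)\le 1+2=3$ for every $k$; taking the supremum in the displayed identity yields $\dim_{\rm Rok}(\delta)\le 3$.

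I expect the main obstacle to be exactly this last comparison: making rigorous the passage from the cohomological index to the geometric $G$-index. Concretely, one must control the sectional-category obstructions to factoring each $c_k$ through a $3$-dimensional skeleton of $B\mathbb{Z}/p^k$ using the deep finiteness properties of $p$-adic transformation groups (the cohomological triviality in degrees $\ge 2$ of the classifying tower, together with the controlled cohomological dimension of the orbit space), rather than by the possibly large or infinite covering dimension of $X/\mathbb{Z}_p$. This is where the whole weight of the argument lies, and where the specific value $3=1+2$ is produced.
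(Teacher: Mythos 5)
Your reduction is the same as the paper's up to the point you yourself flag as the hardest step: finiteness of the Rokhlin dimension gives freeness, and Theorem~\ref{roknew}/Corollary~\ref{roknewab} convert the problem into a uniform bound on ${\rm ind}_{\mathbb{Z}/p^k}$ of the finite quotient actions. Your computation of the transition maps $(B\pi_k)^*$ on mod-$p$ cohomology is also correct. But the argument stops exactly where its weight lies, and the gap is genuine. The cohomological index (the top degree in which $c_k^*$ is nonzero) is a \emph{lower} bound for the Schwarz genus, so showing it is at most $1$ says nothing about ${\rm ind}_{\mathbb{Z}/p^k}(Y_k)$ from above. To bound the index by $3$ you must actually produce an equivariant map into a third multi-join, i.e.\ factor $c_k$ through $S^3/(\mathbb{Z}/p^k)\to B\mathbb{Z}/p^k$. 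The primary obstruction, pulled back from $u^2\in H^4(B\mathbb{Z}/p^k;\mathbb{Z})$, does die up the tower since $u\mapsto pu$, but the higher obstructions live in $H^{i+1}(B;\pi_i(S^3))$ for $i\geq 4$ with $2$- and $3$-torsion coefficients, are not pulled back from $B\mathbb{Z}/p^k$, and are untouched by your computation; moreover $B=X/\mathbb{Z}_p$ may be infinite-dimensional, so the obstruction tower need not terminate. The appeal to Yang--Bredon is a non sequitur: that theory computes the dimension of the orbit space of the full $p$-adic action relative to $\dim X$ and yields no upper bound on the genus of the finite quotients; the arithmetic $3=1+2$ has no argument behind it.

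The paper closes this gap with a constructive ingredient from \cite{cp-19}: for every $d\geq 3$ and $n\geq 1$ there is a $\mathbb{Z}/p^{n2^{d-3}}\mathbb{Z}$-equivariant map $E_d(\mathbb{Z}/p^{n2^{d-3}}\mathbb{Z})\to E_3(\mathbb{Z}/p^{n}\mathbb{Z})$. Writing $H_n=\mathbb{Z}/p^n\mathbb{Z}$ and using the inverse-limit identity $X\times_G H_n\cong\varprojlim_{m\geq n}\bigl(X_m\times_{H_m}H_n\bigr)$ together with the uniform finiteness of ${\rm ind}_{H_m}(X_m)$ supplied by Theorems~\ref{roktriv} and~\ref{roknew}, one composes $X_m\to E_dH_m\to E_3H_n$ for $m=n2^{d-3}$ and obtains ${\rm ind}_{H_n}(X\times_GH_n)\leq 3$ for every $n$. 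That map between multi-joins of cyclic $p$-groups is where the constant $3$ actually comes from; without it, or an equivalent construction of the required equivariant maps, your argument does not close.
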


\begin{proof}
For the sake of brevity, we write $G:=\mathbb{Z}_p$, $H_i:= \mathbb{Z}/p^i\mathbb{Z}$ and $X_i:=X/N_i$ for all $i$.
One can show that
\[
	X\underset{G}{\times}H_n \cong {\varprojlim_{m \geq n}} \left(X_m\underset{H_m}{\times}H_n\right),
\]
which implies that
\[
	{\rm ind}_{H_n}\left(X\underset{G}{\times}H_n\right)
	\leq\underset{m\geq n}{\min}\left\{{\rm ind}_{H_n}\left(X_m\underset{H_m}{\times}H_n\right)\right\}.
\]

We compute the upper bound for the Rokhlin dimension:
	
\begin{align*}
\dim_{\rm Rok}(\delta)&
=\max_n\left\{{\rm ind}_{H_n}\left(X\underset{G}{\times} H_n\right)\right\}
\leq \max_n\min_{m\geq n}\left\{{\rm ind}_{H_n}\left(X_m\underset{H_m}{\times} H_n\right)\right\}\\
&=\max_n\min_{m\geq n}\min
\left\{k\;\big{|}\;\exists\; X_m\underset{H_m}{\times}H_n\overset{H_n}{\longrightarrow} E_kH_n\right\}\\
&=\max_n\min_{m\geq n}\min
\left\{k\;\big{|}\;\exists\; X_m\overset{\mathbb{Z}/p^m}{\longrightarrow} E_kH_n\right\}.
\end{align*}
Here 
the last step uses the fact that if $H$ is a quotient group of a compact group $G$, $X$ is a $G$-space, and $Y$ is an $H$-space 
(and hence also a $G$-space), then $\exists\; X\overset{G}{\longrightarrow} Y$ if and only if 
$\exists\; X {\times}_G H \overset{H}{\longrightarrow} Y$.

From Theorem~\ref{roktriv}, we know that 
${\rm ind}_{H_m}(X_m)=\dim_{\rm Rok}^{H_m}(X_m)<\infty$,
therefore 
$\exists\; X_m\overset{H_m}{\longrightarrow}E_dH_m\text{ for some } d\geq 0$.
The conclusion follows from a result from~\cite{cp-19} which states that
for any $d \geq 3$ and $n \geq 1$ there exists a $\mathbb{Z}/p^{n 2^{d-3}}\mathbb{Z}$-equivariant map
\[
	E_d(\mathbb{Z}/p^{n 2^{d-3}}\mathbb{Z}) \longrightarrow E_3(\mathbb{Z}/p^n\mathbb{Z}).
\]
\end{proof}

\section{The noncommutative Borsuk--Ulam-type conjectures}\label{but}

In this section, we prove a noncommutative Borsuk--Ulam-type result for actions of compact quantum groups
having a classical subgroup whose induced action has finite local-triviality dimension. This result is an easy
consequence of the results presented in Sections~\ref{lt} and \ref{ltuniversal}.
Let us emphasize that the noncommtuative Borsuk--Ulam-type conjecture (see Conjecture~\ref{ncbdh} below) 
was one of the main reasons to introduce the local-triviality dimension,
because in the commutative setting, this conjecture is known exactly in the locally trivial case~\cite{cdt-16}.

The original Borsuk--Ulam antipodal theorem \cite[Satz II]{k-b33} can be equivalently formulated in the following way:
\begin{center}
{\em there is no map $S^{n+1}\to S^n$ intertwining the antipodal actions.}
\end{center}

This result was generalized to $q$-deformed spheres by Yamashita \cite[Corollary~15]{m-y13} 
and $\theta$-deformed spheres by Passer \cite[Corollary 4.6]{b-p16}. 

Matousek generalized the Borsuk--Ulam theorem as follows:
let $G$ be a finite group, then there is no $G$-equivariant map $E_{n}G\to E_{n-1}G$~\cite[Theorem~6.2.5]{Ma03}. 
If $G=\mathbb{Z}/2\mathbb{Z}$, we recover the usual Borsuk--Ulam theorem. 
Chirvasitu, D\k{a}browski and Hajac, based on the unpublished work of
Bestvina and Edwards, extended the aforementioned non-existence result to all compact Hausdorff groups~\cite{cdt-16}.
There is a natural question whether this result holds in the case of the $n$-fold equivariant noncommutative join
of a given compact quantum group.

\begin{conj}\label{ncbe}
Let $\mathbb{G}$ be a compact quantum group. Then there does not exists a $\mathbb{G}$-equivariant $*$-homomorphism
$C(E^\Delta_n\mathbb{G})\to C(E^\Delta_{n+1}\mathbb{G})$ and  
$\dim_{\rm LT}^\mathbb{G}(C(E^\Delta_n\mathbb{G}))=n$.
\end{conj}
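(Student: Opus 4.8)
The plan is to reduce the whole statement to a single lower bound. The upper bound $\dim_{\rm LT}^{\mathbb{G}}(C(E_n^{\Delta}\mathbb{G}))\le n$ is already supplied by the proposition bounding $\dim_{\rm LT}^{\mathbb{G}}(A\overset{\delta}{\circledast}C(\mathbb{G}))$, so it remains to produce a matching lower bound. The two assertions in the conjecture are moreover linked: once $\dim_{\rm LT}^{\mathbb{G}}(C(E_k^{\Delta}\mathbb{G}))\ge k$ is known for every $k$, a $\mathbb{G}$-equivariant $*$-homomorphism $C(E_n^{\Delta}\mathbb{G})\to C(E_{n+1}^{\Delta}\mathbb{G})$ would give, through \eqref{loceq},
\[
n=\dim_{\rm LT}^{\mathbb{G}}(C(E_n^{\Delta}\mathbb{G}))\ge\dim_{\rm LT}^{\mathbb{G}}(C(E_{n+1}^{\Delta}\mathbb{G}))=n+1,
\]
which is absurd; the same equalities deliver $\dim_{\rm LT}^{\mathbb{G}}=n$. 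Hence the task is exactly to show $\dim_{\rm LT}^{\mathbb{G}}(C(E_n^{\Delta}\mathbb{G}))\ge n$.

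To bound this dimension from below I would pass to a non-trivial classical subgroup $H\le\mathbb{G}$, i.e. a surjective Hopf $*$-homomorphism $\pi\colon C(\mathbb{G})\to C(H)$ with $C(H)$ commutative. By functoriality of the join and the identification $E_n^{\Delta}H\cong E_nH$ for classical $H$, this $\pi$ induces a surjective, $H$-equivariant $*$-homomorphism $\Pi_n\colon C(E_n^{\Delta}\mathbb{G})\to C(E_nH)$, the source being viewed as an $H$-C*-algebra by restricting $\delta$ along $\pi$. Feeding $\Pi_n$ into the $H$-version of \eqref{loceq} and using the classical value $\dim_{\rm LT}^{H}(C(E_nH))={\rm ind}_H(E_nH)=n$ — which follows from the theorem of Section~\ref{classical} identifying $\dim_{\rm LT}$ with ${\rm ind}_H$ in the commutative case, from Proposition~\ref{schind}, and from the Matou\v{s}ek/Chirvasitu--D\k{a}browski--Hajac form of the Borsuk--Ulam theorem \cite{Ma03,cdt-16} giving ${\rm ind}_H(E_nH)=n$ — I obtain the restricted lower bound $\dim_{\rm LT}^{H}(C(E_n^{\Delta}\mathbb{G}))\ge n$.

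The hard part, and the reason the statement is only conjectural for arbitrary $\mathbb{G}$, is to transfer this bound from $H$ to $\mathbb{G}$. Restricting an action to a subgroup relaxes equivariance and can strictly \emph{increase} the local-triviality dimension — for instance the Hopf $U(1)$-action on $S^3$ has index $1$, while its restriction to $\{\pm1\}$ is the antipodal action on $S^3=E_3(\mathbb{Z}/2\mathbb{Z})$, of index $3$ — so a lower bound over $H$ carries no information about $\mathbb{G}$, and the monotonicity runs the wrong way. I therefore expect the transfer to be the genuine obstacle: it appears to need both the existence of a classical subgroup (which may fail for a general compact quantum group) and a finiteness hypothesis on the induced $H$-action, exactly the hypothesis under which the paper's main Borsuk--Ulam theorem is proved by replacing $\mathbb{G}$ by $H$ and $A$ by $E_nH$. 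Worse, no intrinsic lower-bound tool for the non-strong dimension is visible, because by the $K$-theory computation of Section~\ref{prem} every free multi-join $C(E_n^{{\mathrlap{+}\times}}\mathbb{G})$ is $K$-theoretically trivial. Only the non-existence half can be isolated and reformulated as $\dim_{\rm SLT}^{\mathbb{G}}(C(E_{n+1}^{\Delta}\mathbb{G}))=n+1$; for this \emph{strong} dimension the equivariant $K$-theory of $C(E_n^{\Delta}\mathbb{G})$ is generally non-trivial and does furnish obstructions, as already seen for the antipodal action on odd free spheres when $\mathbb{G}=\mathbb{Z}/2\mathbb{Z}$.
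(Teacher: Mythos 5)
The first thing to say is that the paper does not prove this statement: it is stated as Conjecture~\ref{ncbe} and left open, with only the surrounding partial results established (the upper bound $\dim_{\rm LT}^{\mathbb{G}}(C(E_n^{\Delta}\mathbb{G}))\le n$ via the proposition on $A\overset{\delta}{\circledast}C(\mathbb{G})$, and the classical case $\dim_{\rm LT}^{G}(C(E_nG))=n$ for compact Hausdorff $G$, quoted from \cite{cdt-16}). So there is no proof in the paper to compare yours against, and you correctly do not claim to supply one. Judged on its own terms, your analysis is accurate: the reduction of both halves of the conjecture to the single lower bound $\dim_{\rm LT}^{\mathbb{G}}(C(E_n^{\Delta}\mathbb{G}))\ge n$ via \eqref{loceq} is valid; the surjection $C(E_n^{\Delta}\mathbb{G})\to C(E_nH)$ induced by a classical quotient $\pi\colon C(\mathbb{G})\to C(H)$ does yield $\dim_{\rm LT}^{H}(C(E_n^{\Delta}\mathbb{G}))\ge n$; and your observation that this bound does not transfer to $\mathbb{G}$ because restriction to a subgroup can strictly increase the dimension (your $U(1)$ versus $\mathbb{Z}/2\mathbb{Z}$ example on $S^3$ is correct) pinpoints exactly why the argument stops there. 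Your remark that the $K$-theoretic triviality of $C(E_n^{{\mathrlap{+}\times}}\mathbb{G})$ removes the most obvious obstruction-theoretic tool, while the strong dimension still admits $K$-theoretic lower bounds, is consistent with the paper's own discussion in the introduction and in the odd-free-spheres example.

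To be explicit about where the gap lies if one insists on reading this as a proof attempt: the missing step is precisely the lower bound $\dim_{\rm LT}^{\mathbb{G}}(C(E_n^{\Delta}\mathbb{G}))\ge n$ for a general compact quantum group $\mathbb{G}$, i.e.\ the content of the conjecture itself. Everything you do establish is correct and matches what the paper establishes; neither you nor the paper closes that step, and the paper's subsequent results (Proposition~\ref{ltbdh} and the final theorem of Section~\ref{but}) are formulated conditionally on, or in special cases of, exactly this missing bound.
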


An analogous conjecture for $n$-fold noncommutative free join of $\mathbb{G}$ would follow from Conjecture~\ref{ncbe}.  
Indeed, this fact can be deduced from the following result which is a direct consequence of the inequality 
\eqref{loceq} and Theorem~\ref{universal}. 

\begin{prop}
Let $\mathbb{G}$ be a compact quantum group and suppose that there exists
a single $\mathbb{G}$-action with the local-triviality dimension equal to $n$. 
Then there is no $\mathbb{G}$-equivariant $*$-homomorphism 
$C(E^{\mathrlap{+}\times}_n\mathbb{G})\to C(E^{\mathrlap{+}\times}_{n+1}\mathbb{G})$
and $\dim_{\rm LT}^\mathbb{G}(C(E^{\mathrlap{+}\times}_n\mathbb{G}))=n$.
\end{prop}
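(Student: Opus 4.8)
The plan is to reduce both assertions to statements about the local-triviality dimension of the free multi-joins, which Theorem~\ref{universal} lets us phrase purely in terms of the existence of equivariant $*$-homomorphisms. Throughout, write $A$ for the hypothesised $\mathbb{G}$-C*-algebra with $\dim_{\rm LT}^{\mathbb{G}}(A)=n$.

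First I would settle the dimension equality $\dim_{\rm LT}^{\mathbb{G}}(C(E^{\mathrlap{+}\times}_n\mathbb{G}))=n$. The inequality $\dim_{\rm LT}^{\mathbb{G}}(C(E^{\mathrlap{+}\times}_n\mathbb{G}))\leq n$ is already recorded in Corollary~\ref{leqn}. For the reverse inequality, since $\dim_{\rm LT}^{\mathbb{G}}(A)=n\leq n$, Theorem~\ref{universal} furnishes a $\mathbb{G}$-equivariant unital $*$-homomorphism $C(E^{\mathrlap{+}\times}_n\mathbb{G})\to A$; feeding this map into the monotonicity inequality~\eqref{loceq} yields $\dim_{\rm LT}^{\mathbb{G}}(C(E^{\mathrlap{+}\times}_n\mathbb{G}))\geq \dim_{\rm LT}^{\mathbb{G}}(A)=n$. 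Combining the two bounds gives the claimed equality. This step is entirely formal.

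For the non-existence assertion, the key observation is that, by Theorem~\ref{universal} applied with codomain $C(E^{\mathrlap{+}\times}_{n+1}\mathbb{G})$, a $\mathbb{G}$-equivariant $*$-homomorphism $C(E^{\mathrlap{+}\times}_n\mathbb{G})\to C(E^{\mathrlap{+}\times}_{n+1}\mathbb{G})$ exists if and only if $\dim_{\rm LT}^{\mathbb{G}}(C(E^{\mathrlap{+}\times}_{n+1}\mathbb{G}))\leq n$. Since Corollary~\ref{leqn} already gives the upper bound $\dim_{\rm LT}^{\mathbb{G}}(C(E^{\mathrlap{+}\times}_{n+1}\mathbb{G}))\leq n+1$, the non-existence statement is therefore equivalent to the strict lower bound $\dim_{\rm LT}^{\mathbb{G}}(C(E^{\mathrlap{+}\times}_{n+1}\mathbb{G}))=n+1$. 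So the remaining task is exactly the dimension equality one level higher, and by the argument of the previous paragraph it would follow at once from a $\mathbb{G}$-action of local-triviality dimension precisely $n+1$. The natural candidate is $A{\mathrlap{+}\times}C(\mathbb{G})$ with its diagonal action: Lemma~\ref{joinup} already provides $\dim_{\rm LT}^{\mathbb{G}}(A{\mathrlap{+}\times}C(\mathbb{G}))\leq n+1$, so only the matching lower bound is missing.

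The main obstacle is precisely this lower bound $\dim_{\rm LT}^{\mathbb{G}}(A{\mathrlap{+}\times}C(\mathbb{G}))\geq n+1$, equivalently the converse of Lemma~\ref{joinup}. Unlike the upper bound, this is not formal: producing lower bounds for the local-triviality dimension of a free join is exactly the Borsuk--Ulam-type content highlighted in the introduction, and it cannot be extracted from \eqref{loceq} and Theorem~\ref{universal} alone. Consequently I would supply the needed dimension-$(n+1)$ witness from the same source that guarantees the dimension-$n$ one: in the intended application this is Conjecture~\ref{ncbe}, which makes each $C(E^\Delta_m\mathbb{G})$ an action of local-triviality dimension exactly $m$ and hence, via the formal argument above, forces $\dim_{\rm LT}^{\mathbb{G}}(C(E^{\mathrlap{+}\times}_m\mathbb{G}))=m$ for every $m$; the desired non-existence at level $n$ is then the case $m=n+1$.
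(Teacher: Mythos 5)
Your treatment of the dimension equality is exactly the paper's intended argument: the paper offers no written proof beyond the remark that the proposition ``is a direct consequence of the inequality \eqref{loceq} and Theorem~\ref{universal}'', and combining Corollary~\ref{leqn} with the map $C(E^{\mathrlap{+}\times}_n\mathbb{G})\to A$ supplied by Theorem~\ref{universal} and monotonicity \eqref{loceq} is precisely how that consequence is meant to be drawn.

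Your diagnosis of the non-existence half is also correct, and it exposes an imprecision in the statement rather than a defect in your reasoning. From a single action of local-triviality dimension exactly $n$ one gets, purely formally, $\dim_{\rm LT}^{\mathbb{G}}(C(E^{\mathrlap{+}\times}_{n}\mathbb{G}))=n>n-1$ and hence, by Theorem~\ref{universal}, the non-existence of equivariant $*$-homomorphisms $C(E^{\mathrlap{+}\times}_{n-1}\mathbb{G})\to C(E^{\mathrlap{+}\times}_{n}\mathbb{G})$ --- the Borsuk--Ulam statement one level \emph{below} the one claimed. To rule out maps $C(E^{\mathrlap{+}\times}_{n}\mathbb{G})\to C(E^{\mathrlap{+}\times}_{n+1}\mathbb{G})$ one needs, as you say, $\dim_{\rm LT}^{\mathbb{G}}(C(E^{\mathrlap{+}\times}_{n+1}\mathbb{G}))=n+1$, hence a witness of dimension exactly $n+1$; the missing converse of Lemma~\ref{joinup} is genuinely non-formal and cannot be conjured from the level-$n$ witness (indeed, one can check that a map at level $n$ would only force $\dim_{\rm LT}^{\mathbb{G}}(C(E^{\mathrlap{+}\times}_{m}\mathbb{G}))=n$ for all $m\geq n$, which contradicts nothing in the hypothesis). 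In both places the proposition is actually invoked --- deducing the free-join analogue from Conjecture~\ref{ncbe}, and the corollary for compact Hausdorff groups via $\dim_{\rm LT}^G(C(E_nG))=n$ for all $n$ --- witnesses are available at every level, so the intended reading of the hypothesis is ``for all $n$'' (or at least at levels $n$ and $n+1$). Read with that hypothesis, your proposal is a complete proof and coincides with the paper's.
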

Since $\dim_{\rm LT}^G(C(E_nG))=n$ for any compact Hausdorff group $G$~\cite{cdt-16}, we arrive at:
\begin{cor}
Let $G$ be a compact Hausdorff group. 
Then there is no $G$-equivariant $*$-homomorphism 
$C(E^{\mathrlap{+}\times}_n G)\to C(E^{\mathrlap{+}\times}_{n+1}G)$ and $\dim_{\rm LT}^G(C(E^{\mathrlap{+}\times}_n G))=n$.
\end{cor}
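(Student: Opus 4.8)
The plan is to deduce this corollary immediately from the preceding proposition together with the quoted fact that $\dim_{\rm LT}^G(C(E_nG))=n$ for any compact Hausdorff group $G$. The key observation is that the preceding proposition only requires the \emph{existence} of a single $\mathbb{G}$-action whose local-triviality dimension equals $n$; it does not require that action to be the free join itself. So the first and essentially only step is to exhibit such an action for $\mathbb{G}=G$ a compact Hausdorff group.

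For this, I would take the classical multi-join $E_nG$ with its diagonal translation action, and pass to the induced $C(G)$-coaction $\delta$ on $C(E_nG)$. Now I invoke the theorem of Section~\ref{classical} identifying the local-triviality dimension of a commutative action with the $G$-index: $\dim_{\rm LT}(\delta)={\rm ind}_G(E_nG)$. By the quoted result of Chirvasitu, D\k{a}browski and Hajac \cite{cdt-16}, we have $\dim_{\rm LT}^G(C(E_nG))={\rm ind}_G(E_nG)=n$. Thus the hypothesis of the preceding proposition is met with this particular classical action, and the conclusion follows verbatim: there is no $G$-equivariant $*$-homomorphism $C(E_n^{\freejoin}G)\to C(E_{n+1}^{\freejoin}G)$, and $\dim_{\rm LT}^G(C(E_n^{\freejoin}G))=n$.

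I do not anticipate any genuine obstacle here, since this is a specialization of an already-proved proposition. The only point requiring a moment's care is making sure the quoted value $\dim_{\rm LT}^G(C(E_nG))=n$ is legitimately available: it combines the classical identification ${\rm ind}_G(X)=\dim_{\rm LT}(\alpha)$ (from the theorem in Section~\ref{classical}) with the external input ${\rm ind}_G(E_nG)=n$ from \cite{cdt-16}, which is precisely the content of the noncommutative Borsuk--Ulam theorem in the commutative, locally trivial setting. Provided both are cited cleanly, the proof reduces to a single line invoking the proposition. In writing it up I would simply note that the classical multi-join supplies the required witness, so that no new argument beyond the proposition is needed.
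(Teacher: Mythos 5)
Your proposal is correct and follows the paper's own route exactly: the paper likewise deduces the corollary from the preceding proposition by citing $\dim_{\rm LT}^G(C(E_nG))={\rm ind}_G(E_nG)=n$ from \cite{cdt-16} (via the identification of the local-triviality dimension with the $G$-index in the commutative case) as the witness action. Your additional remark spelling out that this value combines the classical identification theorem with the external input from \cite{cdt-16} is just a slightly more explicit version of the same one-line argument.
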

Let us write the Borsuk--Ulam theorem for free noncommutative spheres as a~separate result.
\begin{cor}
There is no $*$-homomorphism $C(S^n_+)\to C(S^{n+1}_+)$ intertwining the antipodal actions
and $\dim_{\rm LT}^{\mathbb{Z}/2\mathbb{Z}}(C(S^n_+))=n$.
\end{cor}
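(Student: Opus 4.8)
The plan is to obtain this statement as an immediate specialization of the preceding corollary to the two-element group $G=\mathbb{Z}/2\mathbb{Z}$, using the equivariant identification of free spheres with iterated free noncommutative joins. In other words, no genuinely new argument is needed; the work lies entirely in matching up the objects.

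First I would invoke Example~\ref{frees}, which provides an isomorphism $C(S^n_+)\cong C(E_n^{\mathrlap{+}\times}\mathbb{Z}/2\mathbb{Z})$ for every $n$. The essential point to confirm is that this isomorphism is $\mathbb{Z}/2\mathbb{Z}$-equivariant for the antipodal action $x_i\mapsto -x_i$ on the free sphere. This is already built into the construction there: the map $\Phi$ sends each generator $x_i$ to the generator $p_i$ of the $i$-th copy of $C([-1,1])$, and the antipodal action $x_i\mapsto -x_i$ matches the action $p\mapsto -p$ declared on each copy of $C([-1,1])$, which in turn assembles to the diagonal $\mathbb{Z}/2\mathbb{Z}$-action on $C(E_n^{\mathrlap{+}\times}\mathbb{Z}/2\mathbb{Z})$.

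Having fixed this equivariant isomorphism, I would simply translate the two assertions. The nonexistence of a $*$-homomorphism $C(S^n_+)\to C(S^{n+1}_+)$ intertwining the antipodal actions is, under the identification above, precisely the nonexistence of a $\mathbb{Z}/2\mathbb{Z}$-equivariant $*$-homomorphism $C(E_n^{\mathrlap{+}\times}\mathbb{Z}/2\mathbb{Z})\to C(E_{n+1}^{\mathrlap{+}\times}\mathbb{Z}/2\mathbb{Z})$. Likewise, since the local-triviality dimension depends only on the $\mathbb{G}$-C*-algebra up to equivariant isomorphism, we have $\dim_{\rm LT}^{\mathbb{Z}/2\mathbb{Z}}(C(S^n_+))=\dim_{\rm LT}^{\mathbb{Z}/2\mathbb{Z}}(C(E_n^{\mathrlap{+}\times}\mathbb{Z}/2\mathbb{Z}))$. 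Both conclusions are exactly what the preceding corollary yields for $G=\mathbb{Z}/2\mathbb{Z}$, so the result follows.

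The only point requiring care --- and hence the nominal \emph{main obstacle} --- is the verification of equivariance of the isomorphism $\Phi$; beyond that there is no substantive new work, since the nonexistence of the intertwining map and the exact value $n$ of the dimension were already established for arbitrary compact Hausdorff groups.
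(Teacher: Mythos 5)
Your proposal matches the paper's argument exactly: the corollary is stated as an immediate specialization of the preceding corollary to $G=\mathbb{Z}/2\mathbb{Z}$ via the $\mathbb{Z}/2\mathbb{Z}$-equivariant isomorphism $C(S^n_+)\cong C(E_n^{\mathrlap{+}\times}\mathbb{Z}/2\mathbb{Z})$ established in Example~\ref{frees}, where the equivariance of $\Phi$ (antipodal action $x_i\mapsto -x_i$ matching $p_i\mapsto -p_i$) is already verified. Nothing further is needed.
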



Baum, D\k{a}browski and Hajac 
postulated the following noncommutative Borsuk--Ulam-type conjecture:
\begin{conj}[\cite{BDH15}]\label{ncbdh}
Let $A$ be a unital C*-algebra with a free action $\delta$ of a non-trivial compact quantum group $\mathbb{G}$.
There does not exist a $\mathbb{G}$-equivariant $*$-homomorphism $A\to A\overset{\delta}{\circledast}C(\mathbb{G})$.
\end{conj}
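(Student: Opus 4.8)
The plan is to prove the conjecture under the hypothesis highlighted in the introduction: that $\mathbb{G}$ admits a non-trivial classical subgroup $H$ whose induced action on $A$ has finite local-triviality dimension. In full generality the statement is open, so I would aim only at this case, arguing by contradiction. Suppose there is a $\mathbb{G}$-equivariant $*$-homomorphism $\Phi\colon A\to A\overset{\delta}{\circledast}C(\mathbb{G})$.

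First I would pass to the classical subgroup. Writing $\pi\colon C(\mathbb{G})\to C(H)$ for the surjection of compact quantum groups dual to $H\hookrightarrow\mathbb{G}$, the restricted coaction $\delta_H:=(\mathrm{id}_A\otimes\pi)\circ\delta$ makes $A$ an $H$-C*-algebra, and by hypothesis $n:=\dim_{\rm LT}^H(A)<\infty$ (so $\delta_H$ is free by Theorem~\ref{thm:LocTrivFree}). Applying $\mathrm{id}_A\otimes\pi$ levelwise sends $A\overset{\delta}{\circledast}C(\mathbb{G})$ into $A\overset{\delta_H}{\circledast}C(H)$: the endpoint conditions $f(0)\in\delta(A)$ and $f(1)\in\mathbb{C}\otimes C(\mathbb{G})$ are carried to $\delta_H(A)$ and $\mathbb{C}\otimes C(H)$ respectively. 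One checks this is $H$-equivariant for the diagonal actions, so composing with $\Phi$ yields an $H$-equivariant $*$-homomorphism $\Psi\colon A\to A\overset{\delta_H}{\circledast}C(H)$. By inequality~\eqref{loceq} this already forces $\dim_{\rm LT}^H\!\left(A\overset{\delta_H}{\circledast}C(H)\right)\le n$.

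Next I would use finiteness to replace the opaque algebra $A$ by the universal multi-join. Since $\dim_{\rm LT}^H(A)=n$, Theorem~\ref{universal} supplies an $H$-equivariant $*$-homomorphism $C(E_n^{\freejoin}H)\to A$; transporting this and $\Psi$ through the join functors and the identification $C(E_{n+1}^{\freejoin}H)=C(E_n^{\freejoin}H)\freejoin C(H)$, together with the comparison between the free join and the equivariant join for the classical group $H$, is meant to land us in the purely classical situation where $A$ is replaced by $E_nH$ and $\mathbb{G}$ by $H$. The intended contradiction is then with the classical Borsuk--Ulam theorem for compact Hausdorff groups: by the Corollary following Theorem~\ref{universal}, which rests on~\cite{cdt-16}, one has $\dim_{\rm LT}^H\!\left(C(E_{n+1}^{\freejoin}H)\right)=n+1$ and there is no $H$-equivariant $*$-homomorphism $C(E_n^{\freejoin}H)\to C(E_{n+1}^{\freejoin}H)$.

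The hard part will be this last reduction. The formal inequalities built from equivariant $*$-homomorphisms only ever bound the local-triviality dimension of the join $A\overset{\delta_H}{\circledast}C(H)$ from above (for example, the boundary evaluation at $0$ gives only $\dim_{\rm LT}^H(A\overset{\delta_H}{\circledast}C(H))\ge n$), whereas contradicting $\dim_{\rm LT}^H(A\overset{\delta_H}{\circledast}C(H))\le n$ requires the strict increase $\dim_{\rm LT}^H(A\overset{\delta_H}{\circledast}C(H))\ge n+1$. That strict increase is exactly the Borsuk--Ulam phenomenon, and it is false for a general coaction---this is the content of the conjecture itself---so it cannot be extracted by manipulating dimensions alone. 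The role of the classical subgroup $H$ is precisely to make the join raise the dimension \emph{provably}, via the topological input of~\cite{cdt-16}; thus the crux is to faithfully carry the noncommutative data (the algebra $A$, its universal map from $C(E_n^{\freejoin}H)$, and $\Psi$) into the classical multi-join picture while keeping careful track of the two different join constructions and of the $H$-action on each.
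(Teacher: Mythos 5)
Your first reduction --- restricting along the surjection $C(\mathbb{G})\to C(H)$ to obtain an $H$-equivariant $*$-homomorphism $A\to A\overset{\delta_H}{\circledast}C(H)$ --- matches the paper's first step (the paper invokes \cite{DHN17} for it), and your diagnosis of where the difficulty lies is accurate. But the proposal then stops exactly where the proof has to happen, and the route you sketch toward the classical picture cannot be completed as stated: the universal map of Theorem~\ref{universal} points \emph{into} $A$, i.e.\ $C(E_n^{\freejoin}H)\to A$, so it cannot be used to ``replace $A$ by $E_nH$''; for that you would need an equivariant map \emph{out of} $A$ into something classical, which is precisely what is not available a priori. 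As you yourself observe, the inequality \eqref{loceq} only ever yields the upper bound $\dim_{\rm LT}^H\bigl(A\overset{\delta_H}{\circledast}C(H)\bigr)\le n$, never the lower bound $\ge n+1$ that would close the contradiction, so the argument as proposed does not terminate.

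The paper's proof of the theorem you are targeting supplies the missing ingredient by a different mechanism, following Passer \cite{Pa17}. Evaluating the hypothetical map $A\to A\circledast C(H)$ at each $t\in[0,1]$ and then at the identity $e\in H$ produces a continuous path $\psi_t\colon A\to A$ of unital $*$-homomorphisms joining an $H$-equivariant endomorphism to a one-dimensional representation $\psi_0\colon A\to\mathbb{C}$. The existence of this character --- which is \emph{derived} from the endpoint condition $f(1)\in\mathbb{C}\otimes C(H)$ of the join, not assumed --- shows that the commutator ideal of $A$ is proper, so the abelianization $A_c=C(X)$ is a nonzero commutative quotient; by \eqref{quotient} the induced $H$-action on $C(X)$ still has finite local-triviality dimension, i.e.\ $X\to X/H$ is a locally trivial principal $H$-bundle. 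The path $\psi_t$ descends to a homotopy on $X$ from an $H$-equivariant self-map to a constant map, which is equivalent to the existence of an $H$-map $X\ast H\to X$, and that is ruled out by \cite{cdt-16}. So the classical object one passes to is not the multi-join $E_nH$ but the Gelfand spectrum of the abelianization of $A$ itself, and the character needed to make that quotient nonzero is extracted from the join's endpoint; this evaluation-and-homotopy step is the idea your proposal is missing.
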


Using the notion of the local-triviality dimension, we can obtain a new noncommutative 
Borsuk--Ulam-type result. First, let us demonstrate that Conjecture~\ref{ncbdh}
for locally trivial actions on non-simple C*-algebras follows from Conjecture~\ref{ncbe}. 
Again, the notion of the $n$-universal compact quantum principal bundle is crucial for our arguments.

\begin{prop}\label{ltbdh}
Let $\delta:A\to A\otimes C(\mathbb{G})$ be an action of a compact quantum group~$\mathbb{G}$
on a unital C*-algebra $A$ such that $A$ admits a character and $\dim_{\rm LT}^\mathbb{G}(A)<\infty$.
Then Conjecture~\ref{ncbe} implies Conjecture~\ref{ncbdh}. 
\end{prop}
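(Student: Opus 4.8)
The plan is to argue by contradiction: granting Conjecture~\ref{ncbe}, I suppose there is a $\mathbb{G}$-equivariant $*$-homomorphism $g\colon A\to A\overset{\delta}{\circledast}C(\mathbb{G})$ and derive a contradiction with the finiteness of the local-triviality dimension. First I would record that we are genuinely inside the scope of Conjecture~\ref{ncbdh}: since $\dim_{\rm LT}^\mathbb{G}(A)<\infty$ gives $\dim_{\rm WLT}(\delta)<\infty$ by \eqref{wltlesslt}, Theorem~\ref{thm:LocTrivFree} ensures that $\delta$ is free. Set $n:=\dim_{\rm LT}^\mathbb{G}(A)$.

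Next I would use the character to manufacture a ``basepoint'' equivariant map into $C(\mathbb{G})$. Let $\chi\colon A\to\mathbb{C}$ be a character and put $f:=(\chi\otimes{\rm id})\circ\delta\colon A\to C(\mathbb{G})$. Then $f$ is a unital $*$-homomorphism, and using coassociativity of $\delta$ together with $\Delta\circ(\chi\otimes{\rm id})=\chi\otimes\Delta$ one checks $\Delta\circ f=(\chi\otimes\Delta)\circ\delta=(f\otimes{\rm id})\circ\delta$, so $f$ is $\mathbb{G}$-equivariant from $(A,\delta)$ to $(C(\mathbb{G}),\Delta)=C(E_0^\Delta\mathbb{G})$. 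Note that no invariance of $\chi$ is needed here.

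I would then feed the self-map $g$ into the equivariant join iteratively. Using functoriality of the equivariant join in its first variable (an equivariant $\varphi\colon(A,\delta_A)\to(B,\delta_B)$ induces an equivariant $\varphi\circledast{\rm id}\colon A\overset{\delta_A}{\circledast}C(\mathbb{G})\to B\overset{\delta_B}{\circledast}C(\mathbb{G})$, the endpoint condition at $0$ being preserved exactly by equivariance), I construct equivariant maps $\Phi_k\colon A\to C(E_k^\Delta\mathbb{G})$ for all $k\geq 0$ by induction. Set $\Phi_0:=f$, and given $\Phi_k$, use the identification $C(E_k^\Delta\mathbb{G})\overset{\Delta}{\circledast}C(\mathbb{G})\cong C(E_{k+1}^\Delta\mathbb{G})$ to define
\[
\Phi_{k+1}:=(\Phi_k\circledast{\rm id})\circ g\colon A\xrightarrow{\,g\,}A\overset{\delta}{\circledast}C(\mathbb{G})\xrightarrow{\Phi_k\circledast{\rm id}}C(E_k^\Delta\mathbb{G})\overset{\Delta}{\circledast}C(\mathbb{G})\cong C(E_{k+1}^\Delta\mathbb{G}),
\]
which is again $\mathbb{G}$-equivariant.

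Finally I would close the argument with the monotonicity inequality \eqref{loceq} and Conjecture~\ref{ncbe}. Taking $k=n+1$, the equivariant map $\Phi_{n+1}\colon A\to C(E_{n+1}^\Delta\mathbb{G})$ yields $\dim_{\rm LT}^\mathbb{G}(A)\geq\dim_{\rm LT}^\mathbb{G}(C(E_{n+1}^\Delta\mathbb{G}))$, whereas Conjecture~\ref{ncbe} asserts $\dim_{\rm LT}^\mathbb{G}(C(E_{n+1}^\Delta\mathbb{G}))=n+1$. Hence $n\geq n+1$, a contradiction, so no such $g$ can exist. The main obstacle I expect is the bookkeeping that makes the iteration rigorous: one must verify functoriality of the equivariant join in its first variable for the twisted diagonal action, together with the associativity identification $C(E_k^\Delta\mathbb{G})\overset{\Delta}{\circledast}C(\mathbb{G})\cong C(E_{k+1}^\Delta\mathbb{G})$ as $\mathbb{G}$-C*-algebras, so that each $\Phi_k$ is genuinely equivariant. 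The remaining inputs (freeness, the character map $f$, and \eqref{loceq}) are immediate from results already established.
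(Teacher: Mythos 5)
Your argument is correct and takes essentially the same route as the paper: both proofs turn the hypothetical map $A\to A\overset{\delta}{\circledast}C(\mathbb{G})$, together with the character-induced equivariant map $A\to C(\mathbb{G})$, into a chain $A\to A\overset{\delta}{\circledast}C(\mathbb{G})\to\cdots\to C(E_{n+1}^{\Delta}\mathbb{G})$ and then contradict the dimension statement of Conjecture~\ref{ncbe} via the monotonicity~\eqref{loceq}. The only (harmless) difference is that you apply \eqref{loceq} directly to $A$ with $n=\dim_{\rm LT}^{\mathbb{G}}(A)$, whereas the paper first precomposes with the universal map $C(E_n^{\mathrlap{+}\times}\mathbb{G})\to A$ from Theorem~\ref{universal}; you also spell out the iteration and the equivariance checks that the paper leaves implicit.
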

\begin{proof}
Suppose that we know that Conjecture~\ref{ncbdh} holds for $A$. 
Using universality of $C(E_n^{\mathrlap{+}\times}\mathbb{G})$ (Theorem~\ref{universal}), 
we obtain a chain of $\mathbb{G}$-$*$-homomorphisms
\[
C(E^{\mathrlap{+}\times}_n\mathbb{G})\to A\to A\overset{\delta}{\circledast}C(\mathbb{G})\to\ldots\to 
C(E_{n+1}^\Delta\mathbb{G}).
\]
Hence, to prove Conjecture~\ref{ncbdh} for locally trivial actions it is enough to prove that
$\dim_{\rm LT}^\mathbb{G}(C(E^\Delta_n\mathbb{G}))=n$, which is the exact formulation
of Conjecture~\ref{ncbe}.
\end{proof}
\begin{cor}\label{ltbdh2}
Let $\delta:A\to A\otimes C(G)$ be an action of a compact Hausdorff group~$G$
on a unital C*-algebra $A$ such that $A$ admits a character and $\dim_{\rm LT}^G(A)<\infty$.
There does not exist a~$G$-equivariant $*$-homomorphism $A\to A\overset{\delta}{\circledast}C(G)$.
\end{cor}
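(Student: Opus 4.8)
The plan is to read off the corollary from Proposition~\ref{ltbdh} once we observe that its hypothesis---Conjecture~\ref{ncbe}---is already a \emph{theorem} when the acting quantum group is a classical compact Hausdorff group $G$. The standing assumptions of the corollary (that $A$ admits a character and that $\dim_{\rm LT}^G(A)<\infty$) are exactly the hypotheses of Proposition~\ref{ltbdh} specialized to $\mathbb{G}=G$, and the conclusion is precisely Conjecture~\ref{ncbdh} for this $A$. Thus the whole argument reduces to verifying Conjecture~\ref{ncbe} in the classical case and then quoting Proposition~\ref{ltbdh}.

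To verify Conjecture~\ref{ncbe} for a classical $G$, I would first record the identification $C(E_n^\Delta G)\cong C(E_nG)$ of $G$-C*-algebras: for a classical group the equivariant noncommutative join recovers the function algebra on the topological multi-join $E_nG$ with its diagonal translation action, the ``twist'' in the equivariant join being trivial in the classical setting. This transports both clauses of Conjecture~\ref{ncbe} into statements about the classical multi-joins. The dimension clause $\dim_{\rm LT}^G(C(E_n^\Delta G))=n$ becomes $\dim_{\rm LT}^G(C(E_nG))=n$, which by the identification of the $G$-index with the local-triviality dimension proved in Section~\ref{classical} equals ${\rm ind}_G(E_nG)$ and is shown to equal $n$ in \cite{cdt-16}. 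The nonexistence clause becomes the assertion that there is no $G$-equivariant continuous map $E_{n+1}G\to E_nG$, i.e.\ the Borsuk--Ulam-type theorem of Chirvasitu, D\k{a}browski and Hajac \cite{cdt-16}. Hence Conjecture~\ref{ncbe} holds for every compact Hausdorff group $G$.

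With Conjecture~\ref{ncbe} established classically, I would apply Proposition~\ref{ltbdh} with $\mathbb{G}=G$ to obtain Conjecture~\ref{ncbdh} for our $A$, namely the absence of a $G$-equivariant $*$-homomorphism $A\to A\overset{\delta}{\circledast}C(G)$, which is the statement of the corollary. Concretely, writing $n=\dim_{\rm LT}^G(A)$, any such homomorphism would, via the universal map $C(E_n^{\mathrlap{+}\times}G)\to A$ of Theorem~\ref{universal} together with the chain in the proof of Proposition~\ref{ltbdh}, force $\dim_{\rm LT}^G(C(E_{n+1}^\Delta G))\leq n$ through inequality~\eqref{loceq}, contradicting the dimension clause $\dim_{\rm LT}^G(C(E_{n+1}^\Delta G))=n+1$.

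The only genuine content is the classical input $\dim_{\rm LT}^G(C(E_nG))=n$ (equivalently ${\rm ind}_G(E_nG)=n$) imported from \cite{cdt-16}; everything else is the formal identification $C(E_n^\Delta G)\cong C(E_nG)$ and a direct appeal to Proposition~\ref{ltbdh}. The point to be careful about, and the only place where a check is really needed, is that this identification be genuinely $G$-\emph{equivariant}, so that both the local-triviality dimensions and the relevant equivariant $*$-homomorphisms match on the two sides; granting that, no further obstacle remains.
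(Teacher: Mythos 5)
Your proposal is correct and follows essentially the same route as the paper: the paper's proof is exactly the observation that Conjecture~\ref{ncbe} holds for classical compact Hausdorff groups by \cite{cdt-16} (via the equivariant identification $C(E_n^\Delta G)\cong C(E_nG)$ and the equality of $\dim_{\rm LT}$ with ${\rm ind}_G$ from Section~\ref{classical}), combined with Proposition~\ref{ltbdh}. You merely spell out in more detail the steps the paper leaves implicit.
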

\begin{proof}
The result is a consequence of Proposition~\ref{ltbdh}, because Conjecture~\ref{ncbe} is true for
compact Hausdorff groups~\cite{cdt-16}.
\end{proof}

Finally, we can improve the result in Corollary~\ref{ltbdh2} to arbitrary unital C*-algebras equipped with an action 
of a compact quantum group
admitting a classical subgroup whose induced action has finite local-triviality dimension:

\begin{thm}
Let $\mathbb{G}$ be a compact quantum group, and let $A$ be unital C*-algebra  
equipped with a coaction $\delta:A\to A\otimes C(\mathbb{G})$. Then, if $\mathbb{G}$ admits
a non-trivial classical subgroup $H$ whose induced action $\alpha$ satisfies
$\dim_{\rm WLT}(\alpha)<\infty$, there is no $\mathbb{G}$-equivariant $*$-homomorphism 
$A\to A\overset{\delta}{\circledast} C(\mathbb{G})$.
\end{thm}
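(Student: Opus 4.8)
The plan is to argue by contradiction. Assume there is a $\mathbb{G}$-equivariant (unital) $*$-homomorphism $\Psi\colon A\to A\overset{\delta}{\circledast}C(\mathbb{G})$. I will use $\Psi$ together with the classical subgroup $H$ to produce $H$-equivariant $*$-homomorphisms $A\to C(E_k^{\Delta}H)$ for every $k$, which will force $\dim_{\rm WLT}(\alpha)=\infty$. Passing to $H$ is essential precisely because the resulting multi-joins are then honest $H$-spaces, on which the Borsuk--Ulam phenomenon is already available through~\cite{cdt-16}.

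First I would descend to $H$. Let $\pi\colon C(\mathbb{G})\to C(H)$ be the surjection defining the subgroup, so that $\alpha=({\rm id}_A\otimes\pi)\circ\delta$. Applying ${\rm id}_A\otimes\pi$ fibrewise gives an $H$-equivariant $*$-homomorphism $\pi_*\colon A\overset{\delta}{\circledast}C(\mathbb{G})\to A\overset{\alpha}{\circledast}C(H)$: it respects the endpoint conditions because $({\rm id}_A\otimes\pi)\delta(A)=\alpha(A)$ and because $f(1)\in\mathbb{C}\otimes C(\mathbb{G})$ maps into $\mathbb{C}\otimes C(H)$, and it is equivariant because $\pi$ intertwines the comultiplications. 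Restricting the equivariance of $\Psi$ to $H$ and composing, I obtain an $H$-equivariant $\Phi:=\pi_*\circ\Psi\colon A\to A\overset{\alpha}{\circledast}C(H)$.

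The decisive step, which is what lets me dispense with the character hypothesis used in Proposition~\ref{ltbdh}, is evaluation at the tip. The $H$-action on the equivariant join is carried on the second tensor leg by the comultiplication (the endpoint condition $f(0)\in\alpha(A)$ is preserved by ${\rm id}_A\otimes\Delta$ by coassociativity of $\alpha$), so restriction to $t=1$ is an $H$-equivariant $*$-homomorphism ${\rm ev}_1\colon A\overset{\alpha}{\circledast}C(H)\to(C(H),\Delta)=C(E_0^{\Delta}H)$. Hence $g:={\rm ev}_1\circ\Phi\colon A\to C(E_0^{\Delta}H)$ is $H$-equivariant, and it is obtained directly from $\Psi$ rather than from a character on $A$. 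From here I climb the tower of multi-joins by induction, using only functoriality of the equivariant join: given an $H$-equivariant $h_k\colon A\to C(E_k^{\Delta}H)$, functoriality turns it into an $H$-equivariant $*$-homomorphism $A\overset{\alpha}{\circledast}C(H)\to C(E_k^{\Delta}H)\overset{\Delta}{\circledast}C(H)=C(E_{k+1}^{\Delta}H)$, where the equivariance of $h_k$ is exactly what makes the $t=0$ fibre land in the correct subalgebra. Precomposing with $\Phi$ yields $h_{k+1}$, and starting from $h_0=g$ this produces $H$-equivariant maps $h_k\colon A\to C(E_k^{\Delta}H)$ for all $k$, using functoriality rather than associativity at every stage.

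Finally I would feed in the classical input. Since $H$ is an honest compact group, $C(E_k^{\Delta}H)\cong C(E_k H)$ as $H$-C*-algebras via the twist isomorphism that identifies the equivariant join with the topological join for classical groups, and for these commutative algebras the weak and ordinary local-triviality dimensions coincide (Section~\ref{classical}); together with $\dim_{\rm LT}^H(C(E_k H))=k$ from the Borsuk--Ulam theorem of Chirvasitu--D\k{a}browski--Hajac~\cite{cdt-16}, this gives $\dim_{\rm WLT}^H(C(E_k H))=k$, where the non-triviality of $H$ is precisely what guarantees these values are unbounded. Applying the monotonicity~\eqref{loceq} to each $h_k$ then yields $\dim_{\rm WLT}(\alpha)=\dim_{\rm WLT}^H(A)\ge k$ for every $k$, contradicting $\dim_{\rm WLT}(\alpha)<\infty$. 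I expect the main obstacle to be the tip-evaluation step: one must verify that the equivariant join really carries the $H$-action on its second leg so that ${\rm ev}_1$ is genuinely equivariant, since this is the feature that supplies a collapsing map out of $A$ with no character assumption.
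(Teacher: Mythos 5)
Your argument is correct, but it follows a genuinely different route from the paper's. The paper also first descends to $H$ (citing \cite{DHN17} rather than writing out the map $\pi_*$ as you do), but then evaluates at the identity element $e\in H$ at each $t\in[0,1]$, obtaining a path of unital $*$-homomorphisms $\psi_t\colon A\to A$ joining an $H$-equivariant map to a \emph{character}; the character shows the abelianization $A_c=C(X)$ is nontrivial, finiteness of $\dim_{\rm WLT}$ passes to $A_c$ by \eqref{quotient} and makes $X\to X/H$ locally trivial, and the induced homotopy on $X$ exhibits an equivariant contraction, equivalently an $H$-map $X\ast H\to X$, which \cite{cdt-16} forbids. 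You instead evaluate at the endpoint $t=1$ to get an $H$-equivariant $h_0\colon A\to C(H)$ and then bootstrap: iterating $h_{k+1}=(h_k\circledast\mathrm{id})\circ\Phi$ produces $H$-equivariant maps $A\to C(E_k^{\Delta}H)\cong C(E_kH)$ for every $k$, and monotonicity \eqref{loceq} together with $\dim_{\rm LT}^H(C(E_kH))=k$ from \cite{cdt-16} (plus the coincidence of the weak and ordinary dimensions on commutative algebras) forces $\dim_{\rm WLT}(\alpha)=\infty$. Both proofs ultimately rest on \cite{cdt-16}, but they use different parts of it: the paper needs the non-contractibility statement for locally trivial classical bundles, while you need only the dimension computation for the classical multi-joins, which the paper already quotes in this section; in exchange you avoid the abelianization and the homotopy argument entirely, staying within the monotonicity-and-functoriality formalism of Sections~\ref{qpb} and the spirit of Proposition~\ref{ltbdh}, with the tip-evaluation $\mathrm{ev}_1$ (which is indeed equivariant, since the coaction on $A\overset{\alpha}{\circledast}C(H)$ is $\mathrm{id}_A\otimes\Delta$ applied fibrewise and restricts to $\Delta$ on the fibre $\mathbb{C}\otimes C(H)$ at $t=1$) replacing the character hypothesis. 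The only point you should make explicit is the $H$-equivariant identification $C(E_k^{\Delta}H)\cong C(E_kH)$ for a classical group $H$, which follows from the untwisting isomorphism of \cite{DHH15} and is used implicitly elsewhere in the paper.
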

\begin{proof}
We closely follow the reasoning presented in \cite{Pa17}.
Suppose that there exists a $\mathbb{G}$-equivariant $*$-homomorphism
$A\to A\overset{\delta}{\circledast} C(\mathbb{G})$. Then by \cite{DHN17} there exists a $H$-equivariant
$*$-homomorphism $A\to A\circledast C(H)$. 
Evaluation at $e\in H$ in each point would produce a path of unital $*$-homomorphisms
on $A$ connecting a $H$-equivariant map to a one-dimensional representation. In what follows, we show that
such a path cannot exists.

Denote the aforementioned path of unital $*$-homomorphisms by $\psi_t:A\to A$, where $\psi_1$ is equivariant and $\psi_0:A\to 
\mathbb{C}$. Existence of $\psi_0$ implies that the $H$-invariant ideal $I=\langle ab-ba : a,b\in A\rangle$ is proper and we
can consider the abelianization $A_c:=A/I$. By the Gelfand-Naimark theorem, 
$A_c=C(X)$ for some compact Hausdorff space~$X$. Since
the action of $H$ on $A$ has finite weak local-triviality dimension, by inequality~(\ref{quotient}), the induced action on $A_c$ has finite 
weak local-triviality dimension as well. In terms of spaces, this means that the principal $H$-bundle $X\to X/H$ is locally trivial.

Now for every $t\in[0,1]$, $\psi_t$ induces a map $\widetilde{\psi}_t:A_c\to A_c$. Such a path is dual to the homotopy of maps on 
the space $X$ connecting an $H$-equivariant map with a constant map. This implies equivariant contractibility of $X$, which is
equivalent with the existence of an $H$-map $X\ast H\to X$. However, this map cannot exists by~\cite{cdt-16}.
\end{proof}

\section*{Acknowledgement}\noindent
Authors are grateful to Ludwik D\k{a}browski
for valuable suggestions regarding the exposition of the paper and to G\'abor Szab\'o for discussions concerning c.p.c. order
zero maps. M.T. would also like to thank Ben Passer and Alex Chirvasitu 
for many conversations about applications of the presented results to the Borsuk--Ulam-type
conjectures.
This work is part of the project Quantum Dynamics supported by
EU-grant RISE 691246 and Polish Government grant 317281.
M.T. was
partially supported by the project Diamentowy Grant No.~DI2015 006945
financed by the Polish Ministry of Science and Higher Education.
\clearpage

\end{document}